\newtheorem{theorem}{Theorem}
\newtheorem{definition}[theorem]{Definition}
\newtheorem{proposition}[theorem]{Proposition}
\newtheorem{lemma}[theorem]{Lemma}
\newtheorem{remark}[theorem]{Remark}
\newcommand{\supp}{\mathrm{supp}\,}
\newcommand{\R}{\mathbb{R}}
\newcommand{\C}{\mathbb{C}}
\newcommand{\N}{\mathbb{N}}
\newcommand{\Z}{\mathbb{Z}}
\renewcommand{\epsilon}{\varepsilon}
\newcommand{\eps}{\varepsilon}
\newcommand {\Chi} {{\bf \raise 2pt \hbox{$\chi$}} }
\title{Travelling waves for the cane toads equation with bounded traits.}
\date\today                                           
\author{Emeric Bouin\footnote{Corresponding author}\,
\footnote{Ecole Normale Sup\'erieure de Lyon, UMR CNRS 5669 'UMPA',  and INRIA Alpes, project-team NUMED,  46 all\'ee d'Italie, F-69364~Lyon~cedex~07, France. E-mail: \texttt{emeric.bouin@ens-lyon.fr}}\; 
\and
Vincent Calvez\footnote{Ecole Normale Sup\'erieure de Lyon, UMR CNRS 5669 'UMPA', and INRIA Alpes, project-team NUMED, 46 all\'ee d'Italie, F-69364~Lyon~cedex~07, France. E-mail: \texttt{vincent.calvez@ens-lyon.fr}}\;
}
\begin{document}
\maketitle
\begin{abstract}
In this paper, we study propagation in a nonlocal reaction-diffusion-mutation model describing the invasion of cane toads in Australia \cite{Phillips}. The population of toads is structured by a space variable and a phenotypical trait and the space-diffusivity depends on the trait. We use a Schauder topological degree argument for the construction of some travelling wave solutions of the model. The speed $c^*$ of the wave is obtained after solving a suitable spectral problem in the trait variable. An eigenvector arising from this eigenvalue problem gives the flavor of the profile at the edge of the front. {The major difficulty is to obtain uniform $L^\infty$ bounds despite the combination of non local terms and an heterogeneous diffusivity.}
\end{abstract}
\noindent{\bf Key-Words:} {Structured populations, Reaction-diffusion equations, Travelling waves, Spectral problem}\\
\noindent{\bf AMS Class. No:} {35Q92, 45K05, 35C07}

\section{Introduction.}

In this paper, we focus on propagation phenomena in a model for the invasion of cane toads in Australia, proposed in \cite{Benichou}. It is a structured population model with two structural variables, the space $x\in \R^n$ and the motility $\theta \in \Theta$ of the toads. {Here $\Theta:= \left( \theta_{\text{min}}, \theta_{\text{max}} \right)$, with $\theta_{\text{min}} > 0$} denotes the bounded set of traits. {One modeling assumption is that} the space diffusivity depends {only on} $\theta$. The mutations are simply modeled {by} a diffusion process with constant diffusivity $\alpha$ in the variable $\theta$. {Each toad is in local competition} with all other individuals (independently of their trait) for resources. The resulting reaction term is of monostable type. Denoting $n(t,x,\theta)$ the density of toads having trait $\theta \in \Theta$ in position $x \in \R^n$ at time $t \in \R^+$, the model writes:
\begin{equation}\label{eq:main}
\left\{\begin{array}{l}
\partial_t n - \theta \Delta_x n - \alpha \partial_{\theta\theta} n = r n  (1 - \rho)\, , \qquad (t,x,\theta) \in \R^+ \times \R^n \times \Theta, \medskip\\
\partial_\theta n (t,x,\theta_{\min}) = \partial_\theta n (t,x,\theta_{\max}) = 0\, , \qquad (t,x) \in \R^+ \times \R^n.\\
\end{array}
\right.
\end{equation}
with
\begin{equation*}
\forall (t,x) \in \R^+ \times \R^n, \qquad \rho(t,x) = \int_\Theta n(t,x,\theta)\, d\theta.
\end{equation*}
The Neumann boundary conditions ensure {the conservation of individuals through the mutation process}. 

{
The invasion of cane toads has interested several field biologists. The data collected \cite{Shine, Phillips} show that the speed of invasion has always been increasing during the eighty first years of propagation and that younger individuals at the edge of the invasion front have shown significant changes in their morphology compared to older populations. This example of ecological problem among others (see the expansion of bush crickets in Britain \cite{Thomas}) illustrates the necessity of having models able to describe space-trait interactions. Several works have addressed the issue of front invasion in ecology, where the trait is related to dispersal ability \cite{Desvillettes,Champagnat}. It has been postulated that selection of more motile individuals can occur, even if they have no advantage regarding their reproductive rate, due to spatial sorting \cite{Kokko,Ronce,Shine,Simmons}.}

Recently, some models for populations structured simultaneously by phenotypical traits and a space variable have emerged. A similar model to \eqref{eq:main} in a discrete trait setting has been studied by Dockery \textit{et al.} in \cite{Dockery}. Interestingly, they prove that in a bounded space domain {and with a rate of growth $r(x)$ heterogeneous in space}, 
{the only nontrivial Evolutionarily Stable State (ESS) is a population dominated by the slowest diffusing phenotype}. This conclusion is precisely the opposite of what is expected {at the edge of an invading front}. In \cite{Alfaro}, the authors study propagation in a model close to \eqref{eq:main}, where the {trait affects the growth rate $r$ but not the dispersal ability.} This latter assumption is made to take into account that the most favorable phenotypical trait may depend on space. The model reads
\begin{equation*}
\partial_t n - \Delta_{x,\theta} n = \left( r \left( \theta - B x \cdot e \right) - \int_{\R} k \left( \theta - B x \cdot e , \theta' - B x \cdot e \right) n(t,x,\theta') d \theta' \right) n(t,x,\theta),
\end{equation*}
and the authors prove the existence of travelling wave solutions. A version {with local competition in trait} of this equation has also been studied in \cite{Berestycki-Chapuisat}. {As compared to \cite{Alfaro,Berestycki-Chapuisat}, the main difficulty here is to obtain a uniform $L^\infty\left( \R \times \Theta \right)$ bound on the density $n$ solution of \eqref{eq:main}}. It is worth recalling that this propagation phenomena in reaction diffusion equations, through the theory of travelling waves, has been widely studied since the pioneering work of Aronson and Weinberger \cite{Aronson} on the Fisher-KPP equation \cite{Fisher,Kolmogorov}. We refer to \cite{Nadin,Nolen,Berestycki-Hamel} and the references therein for recent works concerning travelling waves for generalized Fisher-KPP equations in various heterogeneous media, and to \cite{Coville-Davila,Coville-Dupaigne,Shen} for works studying front propagation in models where the non locality appears in the dispersion operator.

Studying propagation phenomena in nonlocal equations can be pretty involved since some qualitative features like Turing instability may occur at the back of the front, see \cite{Berestycki-Nadin,Hamel}, due to lack of comparison principles. Nevertheless, it is sometimes still possible to {construct} travelling fronts with rather abstract arguments. In this article, we aim to give a complete proof of some formal results that were previously announced in \cite{Bouin}. Namely construct some travelling waves solutions of \eqref{eq:main} with the expected qualitative features at the edge of the front. Let us now give the definition of spatial travelling waves we seek for \eqref{eq:main}. 

\begin{definition}\label{defonde}

We say that a function $n(t,x,\theta)$ is a \textit{travelling wave} solution of speed $c \in \R^+$ in direction $e \in \mathbb{S}^n$ if it writes 
\begin{equation*}
\forall (t,x,\theta) \in \R^+ \times \R^n \times \Theta, \qquad n(t,x,\theta):= \mu \left( \xi:= x \cdot e - c t , \theta \right),
\end{equation*}
where {\textit{the profile}} $\mu \in \mathcal{C}_b^2 \left( \R \times \Theta \right)$ is nonnegative, satisfies 
\begin{equation*}
\liminf_{\xi \to - \infty} \mu \left( \xi , \cdot \right) > 0, \qquad \lim_{\xi \to + \infty} \mu \left( \xi , \cdot \right) = 0, 
\end{equation*}
and solves
\begin{equation}\label{eqkinwave}
\begin{cases}
- c \partial_{\xi} \mu = \theta {\partial_{\xi \xi} \mu} + \alpha \partial_{\theta  \theta} \mu + r  \mu  (1 - \nu), \qquad (\xi , \theta) \in \R \times \Theta, \medskip\\
\partial_\theta \mu(\xi,\theta_{\text{min}}) = \partial_\theta \mu(\xi,\theta_{\text{max}}) = 0, \qquad \xi \in \R.
\end{cases}
\end{equation} 
where $\nu$ is the macroscopic density associated to $\mu$, that is $\nu \left( \xi \right) = \int_\Theta \mu \left( \xi, \theta \right) d \theta$.

\end{definition}

To state the main existence result we first need to explain which heuristic considerations yield to the derivation of possible speeds for fronts. As for the standard Fisher-KPP equations, we expect that the fronts we build in this work are so-called \textit{pulled fronts}: They are driven by the dynamics of small populations at the edge of the front. In this case, the speed of the front can be obtained through the linearized equation of \eqref{eqkinwave} around {\bf $\mu <<1$}. The resulting equation (which is now {a local elliptic equation}) writes
\begin{equation}\label{eq:linmain}
\begin{cases}
- c \partial_{\xi} \widetilde \mu = \theta \partial_{\xi \xi} \widetilde\mu + \alpha \partial_{\theta\theta} \widetilde\mu + r \widetilde\mu, \qquad (\xi , \theta) \in \R \times \Theta, \medskip\\
\partial_\theta \widetilde\mu(\xi,\theta_{\text{min}}) = \partial_\theta \widetilde\mu(\xi,\theta_{\text{max}}) = 0, \qquad \xi \in \R.
\end{cases}
\end{equation} 

{Particular solutions of \eqref{eq:linmain} are a combination of an exponential decay in space and a monotonic profile in trait:} 
\begin{equation*}
\forall (\xi , \theta) \in \R \times \Theta, \qquad \widetilde\mu(\xi,\theta) = Q_\lambda(\theta) e^{- \lambda \xi},
\end{equation*}
where $\lambda > 0$ represents the spatial decreasing rate and $Q_\lambda$ the trait profile. {The pair} $(c(\lambda), Q_\lambda)$ solves the following \textit{spectral problem}:
\begin{equation}\label{eq:eigenpb}
\begin{cases} 
\alpha \partial_{\theta \theta} Q_\lambda(\theta) + \left( - \lambda c(\lambda) + \theta \lambda^2 + r \right) Q_\lambda(\theta) = 0\,, \qquad \theta\in \Theta, \medskip\\
\partial_\theta Q_\lambda \left( \theta_{\text{min}} \right) = \partial_\theta Q_\lambda \left( \theta_{\text{max}} \right) = 0, \medskip\\
Q_\lambda(\theta) > 0, \; \int_\Theta Q_\lambda(\theta)\, d\theta = 1\,. 
\end{cases}
\end{equation}

We refer to Section \ref{tools}, Proposition \ref{propspec} for detailed arguments showing that \eqref{eq:eigenpb} has a unique solution $\left( c(\lambda), Q_\lambda \right)$ for all $\lambda > 0$. We also prove there that we can define the minimal speed $c^*$ and its associated decreasing rate through the following formula:
\begin{equation}\label{minspeed}
c^*:= c(\lambda^*) = \min_{\lambda > 0} c(\lambda).
\end{equation}

\begin{remark}
We emphasize that this structure of spectral problem giving information about propagation in models of "kinetic" type is quite robust. We refer to \cite{Alfaro,Berestycki-Chapuisat,Bouin-Calvez-Nadin, Bouin-Calvez-Nadin-2} for works where this kind of dispersion relations also give the speed of propagation of possible travelling wave solutions, and to \cite{Bouin-2,Bouin-Calvez,Bouin-Mirrahimi} for recent works where the same kind of spectral problem appears to find the limiting Hamiltonian in the WKB expansion of hyperbolic limits.   
\end{remark}

We are now ready to state the main Theorem of this paper:

\begin{theorem}\label{wave}
Let {$\Theta:= \left( \theta_{\text{min}} , \theta_{\text{max}} \right), \theta_{\text{min}} > 0, \theta_{\text{min}} < + \infty$}  and $c^*$ be the minimal speed defined after \eqref{minspeed}. Then, there exists a travelling wave solution of \eqref{eq:main} of speed $c^*$ in the sense of Definition \ref{defonde} . 
\end{theorem}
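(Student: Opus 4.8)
The plan is to adapt the topological-degree construction of travelling waves for nonlocal reaction-diffusion equations used in \cite{Alfaro,Berestycki-Chapuisat}: solve a truncated, regularized version of \eqref{eqkinwave} on a bounded slab, prove a priori estimates that do not depend on the truncation, and then let the slab exhaust $\R\times\Theta$. Throughout, the speed is frozen at the value $c^*$ of \eqref{minspeed}, so the limiting profile has speed $c^*$ by construction; the spectral pair $(c^*,Q_{\lambda^*})$ from \eqref{eq:eigenpb} enters only to build a barrier near the leading edge, where $\mu$ should decay like $Q_{\lambda^*}(\theta)e^{-\lambda^*\xi}$ up to a polynomial correction (since $c(\lambda^*)=c^*=\min_{\lambda>0}c(\lambda)$ forces $\lambda^*$ to be a double root of the dispersion relation at speed $c^*$). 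All the facts on the spectral problem that are needed are provided by Proposition \ref{propspec}.

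\emph{Step 1: the truncated problem.} On $\Sigma_a:=(-a,a)\times\Theta$ I would look for $\mu\ge0$ solving
\begin{equation*}
-c^*\partial_\xi\mu=\theta\,\partial_{\xi\xi}\mu+\alpha\,\partial_{\theta\theta}\mu+r\,\mu\,\big(1-\chi(\nu)\big)\quad\text{in }\Sigma_a,
\end{equation*}
with Neumann conditions at $\theta=\theta_{\min},\theta_{\max}$ and Dirichlet conditions $\mu(-a,\cdot)=1/(\theta_{\max}-\theta_{\min})$ (the homogeneous bulk state) and $\mu(a,\cdot)=0$, where $\chi$ is a fixed, bounded, nondecreasing, smooth modification of the identity, equal to it on a large interval (to be made inactive a posteriori by the $L^\infty$ bound). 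Freezing the nonlocal term turns the map ``density $\mapsto$ solution of the resulting linear elliptic problem'' into a compact map, by interior Schauder estimates and the boundary data; its fixed points are nonnegative by the maximum principle (the operator is uniformly elliptic since $\theta_{\min}>0$), and a Leray--Schauder homotopy to an explicitly solvable problem (for instance the reaction switched off) shows the degree is nonzero, yielding a solution $\widetilde\mu_a$. Since $\nu$ runs continuously from $1$ at $\xi=-a$ to $0$ at $\xi=a$, a translation brings the first point where $\nu=\gamma$, for a fixed $\gamma\in(0,1)$, to $\xi=0$; the translated solution $\mu_a$ is normalized by $\nu_a(0)=\gamma$, on a slab that still invades $\R\times\Theta$ as $a\to+\infty$.

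\emph{Step 2: the uniform $L^\infty$ bound --- the main obstacle.} One needs $\|\mu_a\|_{L^\infty(\Sigma_a)}\le M$ with $M$ independent of $a$. The natural route: if $\mu_a$ attains its maximum over $\Sigma_a$ at an interior point $(\bar\xi,\bar\theta)$, then $\partial_\xi\mu_a=0$ and $\partial_{\xi\xi}\mu_a,\partial_{\theta\theta}\mu_a\le0$ there, so \eqref{eqkinwave} forces $\nu_a(\bar\xi)\le1$, i.e.\ an $L^1_\theta$ bound on the trait profile at that abscissa; a Harnack inequality for the associated linear equation (whose zeroth-order coefficient $r(1-\chi(\nu_a))$ is bounded because $\chi$ is) then upgrades this to $\mu_a(\bar\xi,\cdot)\le C_H/(\theta_{\max}-\theta_{\min})$, whence $M\le C_H/(\theta_{\max}-\theta_{\min})$, the cases where the maximum sits on $\{\xi=\pm a\}$ being controlled directly by the boundary data. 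The genuine difficulty --- exactly what \cite{Alfaro,Berestycki-Chapuisat} do not encounter --- is to close this so that $\chi$ is inactive, i.e.\ so that the Harnack constant does \emph{not} blow up with the amount of mass. Because the diffusivity $\theta$ is heterogeneous, integrating \eqref{eqkinwave} in $\theta$ does not close (one only gets $-c^*\nu_a'=(\int_\Theta\theta\mu_a\,d\theta)''+r\nu_a(1-\chi(\nu_a))$, with the weighted density uncontrolled by $\nu_a$), so the a priori bound on $\nu_a$ --- equivalently, the absence of a back ``Turing'' overshoot --- must be obtained by a more delicate argument that crucially uses $0<\theta_{\min}\le\theta\le\theta_{\max}$: for instance, simultaneously controlling $\nu_a$ and $\int_\Theta\theta\mu_a\,d\theta$, or running a maximum principle on a well-chosen combination of the two. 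This is where I expect the real work to be.

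\emph{Step 3: lower bound, passage to the limit, conclusion.} Besides $0\le\mu_a\le M$, one needs a uniform lower bound $\nu_a(\xi)\ge\delta>0$ for $\xi\le0$; this is classical, following from a Harnack inequality together with the instability of the trivial state (on $\{\nu_a\le1/2\}$ one has $r\mu_a(1-\chi(\nu_a))\ge\tfrac r2\mu_a$, and an exponential sub-solution forbids $\nu_a$ from becoming small to the left of the normalization point $\xi=0$ where $\nu_a=\gamma$). With these bounds, interior Schauder estimates give a subsequence $\mu_a\to\mu$ in $\mathcal C^2_{\mathrm{loc}}$; the $L^\infty$ bound makes $\chi$ inactive, so $\mu\in\mathcal C^2_b(\R\times\Theta)$ solves \eqref{eqkinwave}, with $\mu\ge0$ and $\nu(0)=\gamma$, hence $\mu\not\equiv0$. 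Since $-r\mu\nu\le0$, each $\mu_a$ is a subsolution of the linearized problem \eqref{eq:linmain}; comparing $\mu_a$ on $(\xi_0,a)\times\Theta$ with the exact solutions $(A+\xi)\,Q_{\lambda^*}(\theta)\,e^{-\lambda^*\xi}$ of \eqref{eq:linmain} --- the critical generalized eigenfunctions, which for $A$ large dominate $\mu_a$ at $\xi_0$ (by the $L^\infty$ bound) and are nonnegative at $\xi=a$ where $\mu_a$ vanishes --- yields the bound $\mu_a(\xi,\cdot)\le(A+\xi)Q_{\lambda^*}(\cdot)e^{-\lambda^*\xi}$, uniform in $a$, hence $\mu(\xi,\cdot)\to0$ as $\xi\to+\infty$. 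On the other side, $\liminf_{\xi\to-\infty}\nu(\xi)\ge\delta>0$ by the lower bound, and a Harnack inequality in the trait variable spreads this into $\liminf_{\xi\to-\infty}\min_\Theta\mu(\xi,\cdot)>0$ (one expects moreover $\mu(\xi,\cdot)\to1/(\theta_{\max}-\theta_{\min})$ as $\xi\to-\infty$, the only nonnegative $\xi$-independent steady state besides $0$, but for Definition \ref{defonde} only the positivity of the $\liminf$ is needed). Thus $\mu$ is a profile in the sense of Definition \ref{defonde}, and $n(t,x,\theta):=\mu(x\cdot e-c^*t,\theta)$ is the desired travelling wave of speed $c^*$, which proves Theorem \ref{wave}.
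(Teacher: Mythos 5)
Your strategy (slab problem, Leray--Schauder degree, uniform a priori estimates, passage to the limit) is indeed the paper's framework, and you are right to single out the uniform $L^\infty$ bound as the crux, but there are genuine gaps. In Step~2 you leave that bound unproved, and the routes you hint at --- integrating the equation in $\theta$ to close on $\nu_a$ or on $\int_\Theta\theta\mu_a\,d\theta$, or a maximum principle on such a combination --- are exactly what the heterogeneous diffusivity prevents: integrating in $\theta$ produces $\bigl(\int_\Theta\theta\mu\,d\theta\bigr)_{\xi\xi}$, which is not controlled by $\nu$, so no closed (in)equation on $\nu$ results. The paper's actual argument (Lemma~\ref{lem:nc}) is of a different nature: local $H^1$ energy estimates on $\mu$ and on $\partial_\theta\mu$ near a maximum point (crucially using that the nonlocal term is $\theta$-independent, so differentiating the equation in $\theta$ creates no new nonlocal terms), a trace theorem giving $\mu(\xi_0,\cdot)\in H^{3/2}(\Theta)$, and a logarithmic interpolation inequality $\|g\|_{L^\infty}^3\lesssim\|g\|_{L^1}\|g\|_{H^{3/2}}^2$, which finally closes because $\nu(\xi_0)\le 1$ at the maximum.

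Second, the barrier at $+\infty$ in Step~3 is incorrect: plugging $\phi=(A+\xi)Q_{\lambda^*}(\theta)e^{-\lambda^*\xi}$ into the linearization leaves the residual $\bigl(2\theta\lambda^*-c^*\bigr)Q_{\lambda^*}(\theta)e^{-\lambda^*\xi}$, and since \eqref{rel3} makes $c^*/(2\lambda^*)$ a $(Q^*)^2$-weighted average of $\theta$, this residual changes sign on $\Theta$, so $\phi$ is neither a solution nor a supersolution of \eqref{eq:linmain}; the genuine generalized eigenfunction requires the additional term $\partial_\lambda Q_\lambda\big|_{\lambda^*}(\theta)\,e^{-\lambda^*\xi}$. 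More structurally, the paper does not freeze $c=c^*$: the degree argument keeps $c$ as an unknown balanced by the scalar equation $\nu(0)=\eps$, produces a limiting profile with some $c_0\in[0,c^*]$, and then a separate sliding argument using \emph{complex} decay rates (Proposition~\ref{prop:minspeed}) shows that $\liminf_{\xi\to+\infty}\nu=0$ forces $c\ge c^*$, whence $c_0=c^*$. Your ``freeze $c^*$ and translate'' variant would still require proving that the translation amounts $\xi_a$ satisfy $a\pm\xi_a\to+\infty$ so that the translated slabs exhaust $\R\times\Theta$, which is not addressed.
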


This Theorem, together with the heuristic argument, has been announced in \cite{Bouin}.

\begin{remark}
As in \cite{Alfaro,Aronson}, we expect that waves going with faster speeds $c > c^*$ do exist and are constructible by a technique of sub- and super solutions. Nevertheless, since it {does not} make much difference with \cite{Alfaro}, we do not address this issue here.
\end{remark}

The paper is organized as follows. In Section \ref{tools}, {we study the spectral problem \eqref{eq:eigenpb} and provide some qualitative properties}. In Section \ref{Slab}, we elaborate a topological degree argument to solve the problem in a bounded slab. Finally in Section \ref{profileminspeed}, we construct the profile going with speed $c^*$ which proves the existence of Theorem \ref{wave}.  

{\section{The spectral problem.}\label{tools}}

We discuss the spectral problem naturally associated to \eqref{eq:main} that we have stated in \eqref{eq:eigenpb}. We state and prove some useful properties of $Q_{\lambda}$ and some relations between $c^*$ and $\lambda^*$. 

\begin{proposition}[{\bf {Qualitative} properties of the spectral problem}]\label{propspec}
For all $\lambda > 0$, the spectral problem \eqref{eq:eigenpb} has a unique solution $(c(\lambda),Q_\lambda)$. Moreover, the function $\lambda \mapsto c(\lambda)$ has a minimum, that we denote by $c^*$ and that we call the \textit{minimal speed}. This minimum is attained, and we denote by $\lambda^* > 0$ an associated decreasing rate and $Q_{\lambda^*}:= Q^*$ the corresponding profile. Then we have the following properties:

\begin{enumerate}

\item[(i)] {For all $\lambda > 0$,} the profile $Q_\lambda$ is increasing {w.r.t $\theta$}. {There exists $\theta_0$ such that} $Q_\lambda$ is convex on $\left[ \theta_{min} , \theta_{0} \right]$ and concave on $\left[ \theta_0 , \theta_{max} \right]$. Moreover, $\theta_0$ satisfies $- \lambda c(\lambda) + \lambda^2 \theta_0 + r = 0$

\item[(ii)] We define $\left\langle \theta_\lambda \right\rangle:=  \int_\Theta \theta Q_\lambda (\theta) d \theta$, the mean trait associated to the {decay} rate $\lambda$. {We also define $\left\langle \theta^* \right\rangle:=  \left\langle \theta_{\lambda^*} \right\rangle$}. One has
\begin{equation}\label{rel1}
\forall \lambda > 0, \qquad - \lambda c(\lambda) + \lambda^2 \left\langle \theta_\lambda \right\rangle + r = 0, \qquad \left\langle \theta_\lambda \right\rangle > \frac{\theta_{max} + \theta_{min}}{2}.
\end{equation}
\item[(iii)] {About the special features of the minimal speed, we have
\begin{equation}\label{rel6}
c^* > 2 \sqrt{r \langle \theta^* \rangle},
\end{equation}
\begin{equation}\label{rel4}
c^* \geq \lambda^* \left(  \theta_{max} + \theta_{min} \right).
\end{equation}}
\end{enumerate}
\end{proposition}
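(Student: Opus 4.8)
The plan is to treat \eqref{eq:eigenpb} as a Sturm--Liouville eigenvalue problem in $\theta$ with parameter $\lambda$, and to read off the existence of $(c(\lambda),Q_\lambda)$ together with the qualitative properties from the classical theory of the principal eigenvalue. For fixed $\lambda>0$, rewrite \eqref{eq:eigenpb} as $-\alpha \partial_{\theta\theta} Q - \theta\lambda^2 Q = \big(r - \lambda c(\lambda)\big) Q$ on $\Theta$ with Neumann conditions; the operator $L_\lambda Q := -\alpha \partial_{\theta\theta} Q - \lambda^2 \theta Q$ is self-adjoint on $L^2(\Theta)$, bounded below, with compact resolvent, so it has a discrete spectrum whose smallest eigenvalue $\mu_1(\lambda)$ is simple and has a positive eigenfunction. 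Define $c(\lambda)$ by $r-\lambda c(\lambda)=\mu_1(\lambda)$, i.e. $c(\lambda) = (r-\mu_1(\lambda))/\lambda$, and normalize the eigenfunction to get $Q_\lambda$; uniqueness of $(c(\lambda),Q_\lambda)$ follows from simplicity of $\mu_1$ and the positivity/normalization constraints (any other $c$ would force $Q$ to be a non-principal eigenfunction, which changes sign). The variational characterization $\mu_1(\lambda) = \min_{\|Q\|_{L^2}=1}\int_\Theta \big(\alpha |\partial_\theta Q|^2 - \lambda^2 \theta Q^2\big)\,d\theta$ will be the workhorse for everything else.

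For part (i), multiply the ODE by an appropriate test object or just argue directly on the ODE: set $f(\theta):=-\lambda c(\lambda)+\lambda^2\theta+r$, which is affine and increasing in $\theta$, so $\alpha Q_\lambda'' = -f(\theta)Q_\lambda$. Since $Q_\lambda>0$, the sign of $Q_\lambda''$ is opposite to that of $f$; because $f$ is increasing and (by the forthcoming relation \eqref{rel1}, or by a direct integration argument) changes sign exactly once inside $\Theta$ at the point $\theta_0$ with $f(\theta_0)=0$, we get convexity on $[\theta_{\min},\theta_0]$ and concavity on $[\theta_0,\theta_{\max}]$. Monotonicity of $Q_\lambda$: integrating the ODE from $\theta_{\min}$ to $\theta$ and using $Q_\lambda'(\theta_{\min})=0$ gives $\alpha Q_\lambda'(\theta) = -\int_{\theta_{\min}}^\theta f(s)Q_\lambda(s)\,ds$; one checks $f$ cannot be everywhere $\ge 0$ nor everywhere $\le 0$ on $\Theta$ (else the Neumann condition at the other endpoint fails), so $f<0$ on an initial sub-interval and $f>0$ afterwards, whence $Q_\lambda'>0$ on $(\theta_{\min},\theta_{\max})$ by a sign-chasing argument (the negative contribution near $\theta_{\min}$ makes $Q_\lambda'$ positive, and it stays positive).

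For part (ii), integrate the ODE $\alpha Q_\lambda'' + (-\lambda c(\lambda)+\lambda^2\theta+r)Q_\lambda=0$ over $\Theta$: the $Q_\lambda''$ term vanishes by Neumann, $\int Q_\lambda = 1$, and $\int \theta Q_\lambda = \langle\theta_\lambda\rangle$, giving immediately $-\lambda c(\lambda)+\lambda^2\langle\theta_\lambda\rangle + r = 0$. The inequality $\langle\theta_\lambda\rangle > (\theta_{\max}+\theta_{\min})/2$ follows because $Q_\lambda$ is strictly increasing, hence puts more mass on large $\theta$ than the uniform density would: formally, $\langle\theta_\lambda\rangle - \frac{\theta_{\max}+\theta_{\min}}{2} = \int_\Theta \big(\theta - \tfrac{\theta_{\max}+\theta_{\min}}{2}\big)Q_\lambda(\theta)\,d\theta > 0$ by pairing the symmetric weight against the monotone $Q_\lambda$ (a Chebyshev-type rearrangement inequality, strict because $Q_\lambda$ is non-constant). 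For part (iii): at $\lambda=\lambda^*$ we have $(c^*)'(\lambda^*)=0$; differentiating the relation \eqref{rel1} and using the eigenvalue structure (the Feynman--Hellmann-type identity $\frac{d}{d\lambda}\mu_1(\lambda) = -2\lambda\langle\theta_\lambda\rangle$ from the variational formula) lets one compute $c'(\lambda)$ in terms of $\langle\theta_\lambda\rangle$ and $\mathrm{Var}$-like quantities; setting it to zero and combining with \eqref{rel1} yields $c^* = 2\sqrt{r\langle\theta^*\rangle}\cdot(\text{something})$ — more carefully, from \eqref{rel1}, $c(\lambda) = \lambda\langle\theta_\lambda\rangle + r/\lambda \ge 2\sqrt{r\langle\theta_\lambda\rangle}$ by AM--GM, with equality iff $\lambda^2\langle\theta_\lambda\rangle = r$; since $\langle\theta_\lambda\rangle$ genuinely depends on $\lambda$ this equality generically fails at the minimizer, giving the strict inequality \eqref{rel6} (one must rule out the degenerate coincidence, using that $\lambda\mapsto\langle\theta_\lambda\rangle$ is not constant). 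Finally \eqref{rel4}: from \eqref{rel1}, $c(\lambda) = \lambda\langle\theta_\lambda\rangle + r/\lambda$, and I would bound $c^*$ below at $\lambda=\lambda^*$; alternatively, using $Q_{\lambda^*}''\le 0$ past $\theta_0$ and the explicit form of $f$, one derives $\theta_0 \le \langle\theta^*\rangle$ is not quite enough, so instead I would use a comparison of $c^*$ with the problem frozen at $\theta_{\max}$, or estimate via the $\mu_1$ variational characterization with a well-chosen trial function to get $\mu_1(\lambda^*)\le -\lambda^{*2}\tfrac{\theta_{\max}+\theta_{\min}}{2}+(\text{gradient cost})$, then optimize.

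The main obstacle I anticipate is the sharp, strict inequalities in part (iii): the existence and basic properties are routine Sturm--Liouville theory, and \eqref{rel1} is a one-line integration, but squeezing out \eqref{rel6} and \eqref{rel4} requires carefully exploiting the first-order optimality condition at $\lambda^*$ together with monotonicity of $\lambda\mapsto\langle\theta_\lambda\rangle$, and in particular proving that the AM--GM equality case does not occur. I would handle this by establishing monotonicity of $\lambda\mapsto\langle\theta_\lambda\rangle$ from the variational formula (larger $\lambda$ reweights toward larger $\theta$) and then checking that the optimality condition $c'(\lambda^*)=0$ is incompatible with $\lambda^{*2}\langle\theta^*\rangle = r$ unless $\langle\theta_\lambda\rangle$ were constant, which it is not.
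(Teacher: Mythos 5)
Your treatment of the existence/uniqueness (via a self-adjoint Sturm--Liouville operator and its simple principal eigenvalue rather than the paper's Krein--Rutman argument), of part (i), and of part (ii) is essentially sound and roughly parallel to the paper; the only genuine divergence in (ii) is that you prove $\langle\theta_\lambda\rangle>\frac{\theta_{\max}+\theta_{\min}}{2}$ by a Chebyshev/rearrangement argument pairing the increasing $Q_\lambda$ against the anti-symmetric weight, whereas the paper divides the ODE by $Q_\lambda$ and integrates, producing the explicit remainder $\frac{\alpha}{\lambda^2|\Theta|}\int|\partial_\theta Q_\lambda/Q_\lambda|^2$. Both work; the paper's version has the minor advantage of handing you a quantitative formula.

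The genuine gap is in part (iii), and it hinges on a normalization confusion in your Feynman--Hellmann step. You write $\frac{d}{d\lambda}\mu_1(\lambda)=-2\lambda\langle\theta_\lambda\rangle$, but the variational principle for $\mu_1$ is taken under the $L^2$ constraint $\|Q\|_{L^2}=1$, so the correct derivative is $\frac{d}{d\lambda}\mu_1(\lambda)=-2\lambda\,\frac{\int_\Theta\theta Q_\lambda^2\,d\theta}{\int_\Theta Q_\lambda^2\,d\theta}$, the $L^2$-weighted mean of $\theta$, not the $L^1$-weighted mean $\langle\theta_\lambda\rangle=\int\theta Q_\lambda$ (normalized by $\int Q_\lambda=1$). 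These two averages are different whenever $Q_\lambda$ is non-constant, and the gap between them is precisely the engine of the strict inequalities in (iii). Writing $A:=\frac{\int\theta(Q^*)^2}{\int(Q^*)^2}$, the optimality condition $c'(\lambda^*)=0$ gives $c^*=2\lambda^*A$, which combined with $\lambda^*c^*-(\lambda^*)^2\langle\theta^*\rangle-r=0$ yields $(c^*)^2-4r\langle\theta^*\rangle=4(\lambda^*)^2\left(A-\langle\theta^*\rangle\right)^2$. Thus ruling out the AM--GM equality case in your estimate $c^*\geq2\sqrt{r\langle\theta^*\rangle}$ is \emph{exactly} the statement $A>\langle\theta^*\rangle$, which you never establish. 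The paper proves it by the auxiliary function $W=(Q_\lambda)^2$, which satisfies $\alpha W''+2(-\lambda c+\lambda^2\theta+r)W=2\alpha(Q_\lambda')^2\geq0$, then integrates; alternatively, $A>\langle\theta_\lambda\rangle$ follows from the Chebyshev inequality you already invoked in (ii), since $\theta$ and $Q_\lambda(\theta)$ are both increasing under the probability measure $Q_\lambda\,d\theta$ (this is a missed opportunity in your write-up). Your proposed fallback of establishing monotonicity of $\lambda\mapsto\langle\theta_\lambda\rangle$ ``from the variational formula'' does not escape the problem: at $\lambda^*$, differentiating \eqref{rel1} and using $c'(\lambda^*)=0$ gives $\partial_\lambda\langle\theta_\lambda\rangle\big|_{\lambda^*}=\frac{c^*-2\lambda^*\langle\theta^*\rangle}{(\lambda^*)^2}$, whose positivity is again equivalent to $A>\langle\theta^*\rangle$, so the route is circular. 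Finally, your sketch for \eqref{rel4} (trial functions, ``gradient cost'') is too vague to assess, while with the correct identity $c^*=2\lambda^*A$ the bound is a one-liner: $c^*=2\lambda^*A>2\lambda^*\langle\theta^*\rangle>\lambda^*(\theta_{\max}+\theta_{\min})$ by (ii) and the missing lemma.
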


\begin{proof}[\bf Proof of Proposition \ref{propspec}]
We first prove the existence and uniqueness of $(c(\lambda),Q_{\lambda})$ for all positive $\lambda$. {Let $\beta > 0$ and $K$ be the positive cone of nonnegative functions in $\mathcal{C}^{1,\beta}\left( \Theta \right)$}. We define $L$ on $\mathcal{C}^{1,\beta}\left( \Theta \right)$ as below 
\begin{equation*}
L (u)= - \alpha \partial_{\theta\theta}u(\theta) - \left( \theta - \theta_{\text{max}} \right)\lambda^2 u (\theta).
\end{equation*} 

The resolvent of $L$ together with the Neumann boundary condition is compact from the regularizing effect of the Laplace term. Moreover, the strong maximum principle and the boundedness of $\Theta$ gives that it is strongly positive. Using the Krein-Rutman theorem we obtain that there exists a nonnegative eigenvalue {$\frac{1}{\gamma(\lambda)} $}, corresponding to a positive eigenfunction $Q_\lambda$. This eigenvalue is simple and none of the other eigenvalues corresponds to a positive eigenfunction. {As a consequence, $\lambda c(\lambda):= r+  \lambda^2 \theta_{\text{max}} - \gamma( \lambda )$ solves the problem}.

We come to the proof of $(i)$. {Since $Q_\lambda \in \mathcal{C}^2(\Theta)$ and satisfies Neumann boundary conditions}, there exists $\theta_0$ such that $\partial_{\theta \theta} Q_{\lambda}(\theta_0) = 0$. Since $- \lambda c(\lambda) + \lambda^2 \theta + r $ is increasing with $\theta$, the sign of $\partial_{\theta \theta} Q_{\lambda}$ and thus the {monotonicity} of $Q_\lambda$ follows. We deduce:
\begin{equation*}
\lambda^2 \theta_{min} + r  \leq \lambda c(\lambda)  \leq \lambda^2 \theta_{max} + r.
\end{equation*}
This yields 
\begin{equation*}
c(\lambda) \underset{\lambda \to 0}{\sim} \frac{r}{\lambda}, \qquad \lambda c(\lambda) = \mathcal{O}_{\lambda \to + \infty}(\lambda^2).
\end{equation*}
These latter relations and the continuity of $\lambda \mapsto c(\lambda)$ give the existence of a positive minimal speed $c^*$ and a smallest positive minimizer $\lambda^*$. 

We now prove $(ii)$. We obtain the first relation of \eqref{rel1} after integrating \eqref{eq:eigenpb} over $\Theta$ and recalling the Neumann boundary conditions. To get the second one, we divide the spectral problem by $Q_{\lambda}$ and then integrate over $\Theta$:
\begin{equation}\label{rel2}
\left\langle \theta_\lambda \right\rangle = \frac{\theta_{max} + \theta_{min}}{2} + \frac{\alpha}{\lambda^2 \vert \Theta \vert } \int_{\Theta} \left\vert \frac{\partial_\theta Q_{\lambda}}{Q_{\lambda}} \right\vert^2 d \theta > \frac{\theta_{max} + \theta_{min}}{2}.
\end{equation}

We finish with $(iii)$. {For this purpose, we define $W_{\lambda} = \left( Q_{\lambda} \right)^2 $. It satisfies Neumann boundary conditions on $\partial \Theta$ and 
\begin{equation*}
\forall \theta \in \Theta, \qquad \alpha \partial_{\theta \theta} W + 2 \left( - \lambda c(\lambda) + \lambda^2 \theta + r \right) W = \alpha \left( \frac{\partial_\theta W}{2 \sqrt{W}} \right)^2  \geq 0.
\end{equation*}
We thus deduce that 
\begin{equation*}
\lambda^2 \int_\Theta \theta W d \theta + \left( - \lambda c(\lambda) + r \right) \int_\Theta W d \theta > 0,
\end{equation*}
from which we deduce 
\begin{equation}\label{rel5}
\frac{\int_{\Theta} \theta \left( Q^* \right)^2 d \theta}{\int_{\Theta} \left( Q^* \right)^2 d \theta} > \left\langle \theta^* \right\rangle.
\end{equation} }

Differentiating \eqref{eq:eigenpb} with respect to $\lambda$, we obtain
\begin{equation*}
\left( - \lambda c'(\lambda) - c(\lambda) + 2 \theta \lambda \right) Q_\lambda + \left( - \lambda c(\lambda) + \theta \lambda^2 + r \right) \frac{\partial Q_\lambda}{\partial \lambda} + \alpha \partial_{\theta \theta} \left( \frac{\partial Q_\lambda}{\partial \lambda} \right) = 0.
\end{equation*}
We {do not} have {any} information about $\frac{\partial Q_\lambda}{\partial \lambda}$. Nevertheless, one can overcome this {issue by testing} directly {against} $Q_\lambda$. We obtain, for $\lambda = \lambda^*$:
\begin{equation*}
- c^* \int_\Theta \left( Q^* \right)^2 d\theta + 2  \lambda^* \int_\Theta \theta \left( Q^* \right)^2 d\theta = 0,
\end{equation*}
since $c'(\lambda^*)=0$. {As a consequence
\begin{equation}\label{rel3}
\qquad c^* = 2 \lambda^* \frac{\int_{\Theta} \theta \left( Q^* \right)^2 d \theta}{\int_{\Theta} \left( Q^* \right)^2 d \theta}.
\end{equation}
Combining \eqref{rel3} with $- \lambda^* c^* + \left(\lambda^*\right)^2 \left\langle \theta^* \right\rangle + r = 0$, one obtains
\begin{equation}
 \frac{(c^*)^2}{4r} = \frac12 \left( \frac{\int_{\Theta} \theta \left( Q^* \right)^2 d \theta}{\int_{\Theta} \left( Q^* \right)^2 d \theta}\right)^2  \left(  \frac{\int_{\Theta} \theta \left( Q^* \right)^2 d \theta}{\int_{\Theta} \left( Q^* \right)^2 d \theta} - \frac{\left\langle \theta^* \right\rangle}{2} \right)^{-1}.
\end{equation}
%
which gives \eqref{rel6} since $\frac12 \left( \frac{\int_{\Theta} \theta \left( Q^* \right)^2 d \theta}{\int_{\Theta} \left( Q^* \right)^2 d \theta}\right)^2  \left(  \frac{\int_{\Theta} \theta \left( Q^* \right)^2 d \theta}{\int_{\Theta} \left( Q^* \right)^2 d \theta} - \frac{\left\langle \theta^* \right\rangle}{2} \right)^{-1} \geq \left\langle \theta^* \right\rangle$ always holds true and \eqref{rel5} rules out equality.}

Finally, using \eqref{rel1} and \eqref{rel3}, one has  
\begin{equation*}
c^* > 2 \lambda^* \left\langle \theta^* \right\rangle \geq 2 \lambda^* \frac{\theta_{max} + \theta_{min}}{2} = \lambda^*\left( \theta_{max} + \theta_{min} \right).
\end{equation*}

\end{proof}

\section{Solving the problem in a bounded slab.}\label{Slab}

In this Section, we solve an approximated problem in a bounded slab $(-a , a) \times \Theta$.
\begin{definition}
For all $\tau > 0$, we define
\begin{equation*}
\forall \theta \in \Theta, \qquad g_\tau(\theta) = \theta_{min} + \tau \left( \theta - \theta_{min} \right).
\end{equation*}
Now, for all $a > 0$, the slab problem $P_{\tau,a}$ is defined as follows on $[-a , a] \times \Theta$:
\begin{equation}\label{eq:slab}
[P_{\tau,a}]\left\{\begin{array}{l}
-c \mu_{\xi}^a - g_{\tau}(\theta) \mu_{\xi \xi}^a - \alpha \mu_{\theta\theta}^a = r \mu^a (1 - \nu^a)\, , \quad {(\xi,\theta)} \in (-a,a) \times \Theta, \medskip\\
\mu_\theta^a(\xi,\theta_{\min}) = \mu_\theta^a(\xi,\theta_{\max}) = 0\, ,\quad \xi \in {(-a , a)}, \medskip\\
\mu^a(-a,\theta) = \vert\Theta \vert^{-1}\, , \quad \mu^a(a,\theta)  = 0\, , \quad \theta \in \Theta .
\end{array}
\right.
\end{equation}
with the supplementary renormalization condition $\nu^a(0) = \epsilon$. For legibility, we set $P_{1,a}:= P_a$.
\end{definition}

The non-local character of the source term does not provide any full comparison principle for $P_{\tau,a}$. However, we still have $\mu \geq0$. {We follow \cite{Alfaro,Berestycki-Nadin} and shall use the Leray-Schauder theory. For this purpose, some uniform \textit{a priori} estimates (with respect to $\tau, a$) on the solutions of the slab problem are required. The main difference with \cite{Alfaro,Berestycki-Nadin} is that it is more delicate to obtain these uniform $L^\infty$ estimates since it is not possible to write neither a useful equation nor an inequation on $\nu$ due to the term $\theta \mu_{\xi\xi}$ (as it is the case in kinetic equations)}. Our strategy is the following. We first prove in Lemma \ref{upboundc} that the speed is uniformly bounded from above. Then, Lemmas \ref{lem:nc=0} and \ref{bottom} focus on the case $c=0$ and prove that there cannot exist any solution to the slab problem in this case, provided that the normalization $\eps$ is well chosen. Finally, when the speed is given and uniformly bounded, we can derive a uniform \textit{a priori} estimate on the solutions of the slab problem \eqref{eq:slab}. Thanks to these \textit{a priori} estimates, we apply a Leray-Schauder topological degree argument in Proposition \ref{slabsol}. All along Section \ref{Slab}, we omit the superscript $a$ in $\mu^a$ {and $\nu^a$}.

\subsection{An upper bound for $c$.}

\begin{lemma}\label{upboundc}
For any normalization parameter $\epsilon > 0$, there exists a sufficiently large $a_0(\epsilon)$ such that any {pair} $(c,\mu)$ solution of the slab problem $P_{\tau,a}$ with $a \geq a_0(\eps)$ satisfies $c \leq c_\tau^* \leq c^*$.
\end{lemma}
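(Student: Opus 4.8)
Here is how I would approach Lemma~\ref{upboundc}. The statement really contains two independent facts: the comparison $c_\tau^*\le c^*$ between minimal speeds, and the \textit{a priori} bound $c\le c_\tau^*$ on the speed of a slab solution. The first one is soft and purely about the spectral problem. Since the homotopy parameter ranges over $\tau\in[0,1]$ and $\theta\ge\theta_{\min}$ on $\Theta$, the modified diffusivity satisfies $g_\tau(\theta)=\theta_{\min}+\tau(\theta-\theta_{\min})\le\theta$. Now $\lambda c(\lambda)-r$ is the principal eigenvalue of $Q\mapsto\alpha\partial_{\theta\theta}Q+\lambda^2\theta Q$ with Neumann conditions, i.e. $\max_Q\big(-\alpha\int_\Theta(\partial_\theta Q)^2+\lambda^2\int_\Theta\theta Q^2\big)/\int_\Theta Q^2$; replacing $\theta$ by the smaller $g_\tau$ only lowers this Rayleigh quotient, hence $c_\tau(\lambda)\le c(\lambda)$ for every $\lambda>0$ and therefore $c_\tau^*\le c^*$ after minimising as in \eqref{minspeed} (here $c_\tau(\lambda)$, $c_\tau^*$, $\lambda_\tau^*$, $Q_{\lambda_\tau^*}$ denote the objects of \eqref{eq:eigenpb} associated with the diffusivity $g_\tau$).

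For the bound $c\le c_\tau^*$ I would argue by contradiction: suppose $(c,\mu)$ solves $P_{\tau,a}$ with $c\ge c_\tau^*$, and push an exponential comparison function built from the eigenpair $(c_\tau^*,Q_{\lambda_\tau^*})$. Since $\mu\ge0$ forces $1-\nu\le1$, $\mu$ is a subsolution of the linearised operator $-c\partial_\xi-g_\tau\partial_{\xi\xi}-\alpha\partial_{\theta\theta}-r$; that operator, however, satisfies no comparison principle on a long slab, so rather than comparing directly I would conjugate by the weight $\phi(\xi,\theta):=Q_{\lambda_\tau^*}(\theta)e^{-\lambda_\tau^*\xi}$. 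Setting $v:=\mu/\phi$ and using the identity of \eqref{eq:eigenpb} for $(c_\tau^*,Q_{\lambda_\tau^*})$ to substitute for $\alpha\partial_{\theta\theta}\phi$, one finds that $v$ solves
\begin{equation*}
-g_\tau v_{\xi\xi}-\alpha v_{\theta\theta}+\left(2\lambda_\tau^* g_\tau-c\right)v_\xi-2\alpha\,\frac{\partial_\theta Q_{\lambda_\tau^*}}{Q_{\lambda_\tau^*}}\,v_\theta+\left(\lambda_\tau^*(c-c_\tau^*)+r\nu\right)v=0,
\end{equation*}
with $v_\theta=0$ at $\theta=\theta_{\min},\theta_{\max}$ (from the Neumann conditions on $\mu$ and $Q_{\lambda_\tau^*}$) and Dirichlet data $v(-a,\cdot)=\vert\Theta\vert^{-1}/\phi(-a,\cdot)$, $v(a,\cdot)=0$. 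The point of the conjugation is that the new zeroth-order coefficient $\lambda_\tau^*(c-c_\tau^*)+r\nu$ is nonnegative, since $c\ge c_\tau^*$ and $\nu\ge0$.

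Then the weak maximum principle applies to $v$: no positive interior maximum, and Hopf's lemma rules out a positive maximum on the Neumann boundary $\{\theta=\theta_{\min},\theta_{\max}\}$, so $v\le\max\!\big(0,\sup_{\xi=\pm a}v\big)$. At $\xi=a$ this supremum is $0$, and at $\xi=-a$ it is $\vert\Theta\vert^{-1}/\big(Q_{\lambda_\tau^*}(\theta)e^{\lambda_\tau^*a}\big)\le M_a:=\dfrac{\vert\Theta\vert^{-1}}{Q_{\lambda_\tau^*}(\theta_{\min})\,e^{\lambda_\tau^*a}}$, using that $Q_{\lambda_\tau^*}$ is increasing (Proposition~\ref{propspec}(i)). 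Hence $v\le M_a$ on the whole slab, i.e. $\mu(\xi,\theta)\le M_a\,Q_{\lambda_\tau^*}(\theta)e^{-\lambda_\tau^*\xi}$; evaluating at $\xi=0$ and integrating over $\Theta$ with $\int_\Theta Q_{\lambda_\tau^*}=1$ gives $\epsilon=\nu(0)\le M_a$. Since $M_a\to0$ as $a\to+\infty$, this is impossible once $a$ is large, which proves $c<c_\tau^*\le c^*$.

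The remaining, and together with the loss of comparison principle the most delicate, point is that the threshold $a_0(\epsilon)$ must be independent of $\tau$; this reduces to uniform lower bounds $\lambda_\tau^*\ge\underline\lambda>0$ and $Q_{\lambda_\tau^*}(\theta_{\min})\ge\underline q>0$. The first follows from $\lambda c_\tau(\lambda)\ge\lambda^2\theta_{\min}+r\ge r$ and $c_\tau^*\le c_\tau(1)\le\theta_{\max}+r$, so $\lambda_\tau^*\ge r/(\theta_{\max}+r)$. For the second I would use the eigenfunction ODE $\alpha\partial_{\theta\theta}Q_{\lambda_\tau^*}=\big(\lambda_\tau^* c_\tau^*-g_\tau(\lambda_\tau^*)^2-r\big)Q_{\lambda_\tau^*}$, whose coefficient is uniformly bounded in $\tau$ (using also $\lambda_\tau^*\le c_\tau^*/(2\theta_{\min})$ from Proposition~\ref{propspec}(iii)): a Gronwall estimate on the convex piece $[\theta_{\min},\theta_0]$ followed by the concavity on $[\theta_0,\theta_{\max}]$ bounds $Q_{\lambda_\tau^*}(\theta_{\max})/Q_{\lambda_\tau^*}(\theta_{\min})$ by a constant depending only on $r,\alpha,\theta_{\min},\theta_{\max}$, and then the normalisation $\int_\Theta Q_{\lambda_\tau^*}=1$ forces $Q_{\lambda_\tau^*}(\theta_{\min})$ away from $0$. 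With these bounds any $a_0(\epsilon)\ge\underline\lambda^{-1}\log\big((\epsilon\vert\Theta\vert\underline q)^{-1}\big)$ (and any $a_0>0$ if that quantity is negative) does the job, uniformly in $\tau$.
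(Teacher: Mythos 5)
Your proof is correct and follows, at bottom, the same comparison‑function strategy as the paper, but you phrase it differently and in one respect more carefully. The paper introduces $\psi_A = A\,e^{-\lambda_\tau^*\xi}Q_\tau^*(\theta)$, finds the critical $A_0$ at which it touches $\mu$, and excludes the touching point from the interior, from the Neumann boundary, and from the right Dirichlet boundary, leaving the left boundary where the normalization forces a contradiction; your conjugation $v:=\mu/\phi$ with $\phi=Q_{\lambda_\tau^*}e^{-\lambda_\tau^*\xi}$ recasts this sliding argument as a direct maximum‑principle statement. The crucial computation — that the zeroth‑order coefficient of the conjugated operator is $\lambda_\tau^*(c-c_\tau^*)+r\nu\geq 0$ under the contradiction hypothesis — is exactly the inequality the paper uses; your formulation has the advantage of automatically pushing the maximum to the Dirichlet part of the boundary. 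Two additions on your part are worth noting. First, you supply the Rayleigh‑quotient argument for $c_\tau^*\leq c^*$, a step the paper leaves implicit. Second, and more substantively, you observe that the threshold $a_0(\epsilon)$ must be uniform in $\tau\in[0,1]$ and therefore requires lower bounds on $\lambda_\tau^*$ and $\min_\Theta Q_{\lambda_\tau^*}$ uniform in $\tau$; the paper's ``which is smaller than $\epsilon$ for a sufficiently large $a$'' does not address this. Your bound $\lambda_\tau^*\geq r/(\theta_{\max}+r)$ is complete; the bound on $Q_{\lambda_\tau^*}(\theta_{\min})$ is only sketched (Gronwall on the eigenfunction ODE), but the route you indicate is sound — a Harnack inequality for $Q_{\lambda_\tau^*}$ on $\Theta$ with constants depending only on $\alpha$, $r$, $\theta_{\min}$, $\theta_{\max}$ would serve equally well. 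One small notational slip: in the conjugation you cite ``the identity of \eqref{eq:eigenpb}'' but in fact use the $g_\tau$‑perturbed eigenproblem; the content is correct.
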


\begin{proof}[{\bf Proof of Lemma \ref{upboundc}}]
We just adapt an argument from \cite{Alfaro,Berestycki-Nadin}. It consists in finding a relevant subsolution for a related problem. 
As $\mu \geq 0$, one has
\begin{equation}\label{eq:n}
\forall (\xi,\theta) \in (-a,a) \times \Theta, \qquad -c \mu_{\xi} \leq g_\tau(\theta) \mu_{\xi\xi} + \alpha \mu_{\theta\theta} + r \mu.
\end{equation}
As \eqref{eq:eigenpb}, the following pertubated spectral problem has a unique solution associated with a minimal speed $c_\tau^*$: 
\begin{equation}
\begin{cases} 
\alpha \partial_{\theta\theta} Q_\tau^*(\theta) + \left( - \lambda_\tau^* c_\tau^* + g_\tau(\theta) \left( \lambda_\tau^* \right)^2 + r \right) Q_\tau^*(\theta) = 0\, , \qquad \theta\in \Theta, \medskip\\
\partial_\theta Q_\tau^* \left( \theta_{\text{min}} \right) = \partial_\theta Q_\tau^* \left( \theta_{\text{max}} \right) = 0, \medskip\\
Q_\tau^*(\theta) > 0, \; \int_\Theta Q_\tau^*(\theta)\, d\theta = 1\,. 
\end{cases}
\end{equation}
Let us assume by contradiction that $ c > c^* $, then the family of functions $\psi_A ( \xi, \theta ):= A {e^{- \lambda_\tau^*\xi} Q_\tau^* ( \theta )}$ verifies
\begin{equation}\label{eq:psi}
\forall (\xi,\theta) \in (-a,a) \times \Theta, \qquad  g_\tau(\theta) \left( \psi_A \right)_{\xi\xi} + \alpha \left( \psi_A \right)_{\theta\theta} + r \psi_A =  \lambda^* c^* \psi_A < - c \left( \psi_A \right)_\xi,
\end{equation}
As the eigenvector $Q^*$ is positive, and $\mu \in L^{\infty} \left( -a , a \right)$, one has $\mu \leq \psi_A$ for $A$ sufficiently large, and $\mu \geq \psi_A$ for $A$ sufficiently small. As a consequence, one can define 
\begin{equation*}
A_0 = \inf \left\lbrace A \; \vert \; \forall (\xi, \theta) \in \left( -a , a \right) \times \Theta, \; \psi_A (\xi, \theta) > \mu(\xi,\theta) \right\rbrace. 
\end{equation*}
Necessarily, $A_0 > 0$ and there exists a point $(\xi_0, \theta_0) \in \left[ -a , a \right] \times \left[ \theta_{\text{min}}, \theta_{\text{max}} \right] $ where $\psi_{A_0}$ touches $\mu$:
\begin{equation*}
\mu(\xi_0 , \theta_0) = \psi_{A_0}(\xi_0 , \theta_0).
\end{equation*}
This point minimizes $\psi_A - n $ and {cannot} be in $\left( -a , a \right) \times \Theta$. Indeed, combining  \eqref{eq:n} and \eqref{eq:psi}, one has in the interior, 
\begin{equation*}
{\forall (\xi,\theta) \in ( -a , a ) \times \Theta} , \qquad  c \left( \psi_A - n \right)_\xi + g_\tau(\theta) \left( \psi_A - n \right)_{\xi\xi} + \alpha \left( \psi_A - n \right)_{\theta\theta} + r \left( \psi_A - n \right)  < 0.
\end{equation*}
But, if $(\xi_0,\theta_0)$ is in the interior, this latter inequality cannot hold since $ \theta \left( \psi_A - n \right)_{\xi\xi} + \alpha \left( \psi_A - n \right)_{\theta\theta} \geq 0 $.
{Next we} eliminate the boundaries. First, $(\xi_0,\theta_0)$ cannot lie in the right boundary $ \left\lbrace x = a \right \rbrace \times \Theta $ since $\psi_{A_0} > 0$ and $ \mu = 0 $ there. Moreover, thanks to the Neumann boundary conditions satisfied by both $\psi_A$ and $\mu$, $(\xi_0,\theta_0)$ {cannot} be in $\left[ -a , a \right] \times \left\lbrace \theta_{\text{min}}, \theta_{\text{max}} \right \rbrace$. We now exclude the left boundary by adjusting the normalization. If $\xi_0 = -a$, then $\psi_A (\xi_0, \theta_0) = \vert \Theta \vert^{-1}$ and {$A_0 = \frac{e^{- \lambda_\tau^* a}}{\vert \Theta \vert Q_\tau^* ( \theta_0 ) }$. Then $\nu(0) \leq \frac{e^{- \lambda_\tau^* a}}{\Theta Q_\tau^* ( \theta_0 ) }$ which is smaller than $\epsilon$ for a sufficiently large $a$.}

\end{proof}
\subsection{The special case $c=0$.}

We now focus on the special case $c=0$. We first show (Lemma \ref{lem:nc=0}) that the density $\mu$ is uniformly bounded (with respect to $a >0$). From this estimate, we deduce in Lemma \ref{bottom} that there exists a constant $\eps_0$ depending only on the fixed parameters of the problem such that necessarily $\nu(0) \geq \eps_0$. Thus, provided that $\eps$ is set sufficiently small, our analysis will conclude that the slab problem {does not admit a} solution of the form $(c,\mu) = (0,\mu)$ {for $\eps < \eps_0$. We emphasize that the key \textit{a priori} estimate, \textit{i.e.} $\nu \in L^\infty\left( (-a,a) \times \Theta \right)$, is easier to obtain in the case $c=0$ than in the case $c \neq 0$ (compare Lemmas \ref{lem:nc=0} and \ref{lem:nc}}).

\subsubsection{A priori estimate for {$\mu$} when $c=0$.}

\begin{lemma}{\bf (A priori estimates, $c = 0$).}\label{lem:nc=0}

Assume $c = 0$, $b > 0$ and $\tau \in [0,1]$. {There exists a constant $C(b)$ such that every solution $(c=0,\mu)$ of \eqref{eq:slab} satisfies}
\begin{equation*}
\forall (\xi,\theta) \in [ -b , b ] \times \Theta, \quad \mu(\xi,\theta) \leq \frac{{C(b)}}{\Theta} \frac{\theta_{max}}{\theta_{min}}.
\end{equation*}
\end{lemma}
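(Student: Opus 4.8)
The plan is to bootstrap from the trivial bound $\nu^a(0)=\eps$ (the renormalization, but here we really want to control $\mu$ on $[-b,b]$ uniformly in $a$) by combining an upper barrier in $\theta$ with an interior elliptic estimate in $\xi$. Since $c=0$, the equation reads $-g_\tau(\theta)\mu_{\xi\xi}-\alpha\mu_{\theta\theta}=r\mu(1-\nu)\le r\mu$ because $\mu\ge 0$ and $\mu$ appears with a possibly negative factor $(1-\nu)$ only when $\nu\ge 1$, in which case $r\mu(1-\nu)\le 0\le r\mu$; when $\nu<1$ we simply bound by $r\mu$. So $\mu$ is a nonnegative subsolution of the \emph{local, linear} elliptic operator $-g_\tau\partial_{\xi\xi}-\alpha\partial_{\theta\theta}-r$ with Neumann conditions on $\partial\Theta$ and boundary data $\mu(-a,\cdot)=|\Theta|^{-1}$, $\mu(a,\cdot)=0$.

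First I would build an explicit supersolution. The natural guess, inspired by the spectral problem \eqref{eq:eigenpb}, is $\overline\mu(\xi,\theta)=M\,Q_\tau^\sharp(\theta)e^{-\mu_0\xi}$ for a suitable decay rate $\mu_0>0$ and suitable trait profile; but a cleaner route, since we only need an $L^\infty$ bound on a \emph{fixed} compact slab $[-b,b]$ and not a global exponential tail, is to work on the subslab $[-b-1,b+1]$ and use the boundary value $\mu(-a,\cdot)=|\Theta|^{-1}$ only through the fact that $\mu$ is a subsolution whose value at $\xi=-a$ is pinned. Concretely, I would first get an $L^1_\xi$ or $L^2_\xi$ bound: integrate the equation against $Q_0(\theta)$ (the eigenfunction of the pure-$\theta$ problem, i.e. the principal Neumann eigenfunction of $-\alpha\partial_{\theta\theta}$, which is constant) — integrating in $\theta$ kills the $\alpha\mu_{\theta\theta}$ term by Neumann, giving $-\big(\int_\Theta g_\tau\mu\,d\theta\big)_{\xi\xi}\lesssim$ ... — this is exactly the difficulty the authors flag: the weight $g_\tau(\theta)$ does not come out of the second derivative, so one does not get a closed equation on $\nu$. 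Instead I would compare with the $\theta$-independent majorant: since $\theta_{\min}\le g_\tau(\theta)\le\theta_{\max}$, set $v(\xi)=\int_\Theta\mu\,d\theta=\nu$ and note $-\theta_{\max}v_{\xi\xi}\le-\int g_\tau\mu_{\xi\xi}=\alpha\cdot 0+r\int\mu(1-\nu)\le r v$, so $v$ is a subsolution of $-\theta_{\max}v''-rv=0$ on $(-a,a)$ with $v(-a)=1$, $v(a)=0$. The maximum principle for this \emph{ODE} then gives $v(\xi)\le \frac{\sinh\!\big(\sqrt{r/\theta_{\max}}(a-\xi)\big)}{\sinh\!\big(2a\sqrt{r/\theta_{\max}}\big)}$, which on $[-b,b]$ is bounded by a constant $C_1(b)$ independent of $a$ (it decays like $e^{-\sqrt{r/\theta_{\max}}(a-\xi)}$, hence is $\le e^{-\sqrt{r/\theta_{\max}}(a-b)}\to 0$; in any case it is $\le 1$ and $\le C_1(b)$). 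Hence $\nu(\xi)\le C_1(b)$ on $[-b,b]$.

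Next, with $\nu\le C_1(b)$ on $[-b,b]$ in hand, the right-hand side $r\mu(1-\nu)$ is controlled by $r(1+C_1(b))\mu$, and I would upgrade the $L^1_\theta$ control of $\mu$ to an $L^\infty$ control by the trait-profile argument the authors use elsewhere: fix $\xi\in[-b,b]$ and look at $\theta\mapsto\mu(\xi,\theta)$; from the equation, $-\alpha\mu_{\theta\theta}=g_\tau\mu_{\xi\xi}+r\mu(1-\nu)$. The trait direction is where the $\theta_{\max}/\theta_{\min}$ ratio enters: using the convexity/concavity structure from Proposition \ref{propspec}(i) — or more directly, since $\int_\Theta\mu(\xi,\theta)\,d\theta=\nu(\xi)\le C_1(b)$ and $\mu$ is nonnegative, there is $\theta_*$ with $\mu(\xi,\theta_*)\le C_1(b)/|\Theta|$, and then controlling $\mu_\theta$ via the Neumann condition and the equation lets one propagate this pointwise bound across $\Theta$ at the cost of the factor $\theta_{\max}/\theta_{\min}$ (the ellipticity ratio of the $\xi$-diffusion). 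Combining, $\mu(\xi,\theta)\le \frac{C(b)}{|\Theta|}\frac{\theta_{\max}}{\theta_{\min}}$ for all $(\xi,\theta)\in[-b,b]\times\Theta$.

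The main obstacle is exactly the step flagged in the text: one cannot get a self-contained PDE/inequation for $\nu$ because of the trait-dependent diffusivity $g_\tau(\theta)\mu_{\xi\xi}$. The device that saves the day in the case $c=0$ is that the zeroth-order term has a \emph{sign} ($-cv_\xi$ is absent, and $r\mu(1-\nu)\le rv$), so crudely replacing $g_\tau(\theta)$ by its maximum $\theta_{\max}$ still yields a \emph{subsolution} of a scalar, constant-coefficient, \emph{homogeneous} ODE with the right boundary data — and comparison for that ODE is elementary. The reason this is "easier than $c\ne 0$" (cf. Lemma \ref{lem:nc}) is precisely that with $c\ne 0$ the transport term $-c\mu_\xi$ does not survive integration against a $\theta$-constant test function in a sign-definite way once $g_\tau$ varies, so the reduction to a scalar ODE breaks and one must argue more carefully.
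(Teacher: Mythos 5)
Your central step does not hold. You claim a pointwise inequality $-\theta_{\max}\nu_{\xi\xi}\le-\int_\Theta g_\tau(\theta)\mu_{\xi\xi}\,d\theta$ and deduce $-\theta_{\max}\nu''\le r\nu$. Since $\int_\Theta\bigl(\theta_{\max}-g_\tau(\theta)\bigr)\mu_{\xi\xi}\,d\theta$ has the sign of $\mu_{\xi\xi}$ in the region where $\theta_{\max}-g_\tau>0$, and $\mu_{\xi\xi}$ has no definite sign, this comparison is simply false in general. This is precisely the obstruction the authors flag: one cannot replace $g_\tau(\theta)$ by a constant inside $\int g_\tau\mu_{\xi\xi}\,d\theta$ and still get a closed inequation on $\nu$. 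Moreover, even granting the inequality, your barrier is wrong for the resulting operator: $-\theta_{\max}v''-rv=0$ has trigonometric (not $\sinh$) solutions, so the operator $-\theta_{\max}\partial_{\xi\xi}-r$ loses the maximum principle on large slabs and cannot yield a bound on $\nu$ uniform in $a$.

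The device that actually works in the paper is to introduce $w(\xi):=\int_\Theta g_\tau(\theta)\mu(\xi,\theta)\,d\theta$ rather than $\nu$. Integrating the equation in $\theta$ (Neumann kills the $\alpha\mu_{\theta\theta}$ term) gives the \emph{exact} scalar identity $-w_{\xi\xi}=r\nu(1-\nu)$ with $w(a)=0$ and $w(-a)=|\Theta|^{-1}\int_\Theta g_\tau$. At a maximum point $\xi_0$ of $w$ one has $w_{\xi\xi}(\xi_0)\le 0$, hence $\nu(\xi_0)\le 1$ (with the boundary cases trivially handled). Then the pointwise sandwich $g_\tau(\theta_{\min})\nu(\xi)\le w(\xi)\le w(\xi_0)\le g_\tau(\theta_{\max})\nu(\xi_0)\le\theta_{\max}$, together with $g_\tau(\theta_{\min})=\theta_{\min}$, gives $\nu\le\theta_{\max}/\theta_{\min}$ on all of $(-a,a)$ with no ODE comparison and no dependence on $a$. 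Finally, the pointwise bound on $\mu$ is obtained via the Harnack estimate of Proposition \ref{Harnack}, $\mu(\xi,\theta)\le \tfrac{C(b)}{|\Theta|}\nu(\xi)$; note that the $\theta_{\max}/\theta_{\min}$ factor comes from the $\nu$ bound, not from any propagation of $\mu$ across $\Theta$, which your second paragraph misattributes and leaves unjustified in any case.
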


\begin{proof}[{\bf Proof of Lemma \ref{lem:nc=0}}]
When $c=0$, the slab problem \eqref{eq:slab} reduces to 
\begin{equation*}
[P_{\tau,b}]\left\{\begin{array}{l}
- g_{\tau}(\theta) \mu_{\xi \xi} - \alpha \mu_{\theta\theta} = r \mu (1 - \nu)\, , \quad (\xi,\theta) \in {(-b,b)} \times \Theta, \medskip\\
\mu_\theta(\xi,\theta_{\min}) = \mu_\theta(\xi,\theta_{\max}) = 0\, ,\quad \xi \in {(-b , b)}, \medskip\\
\mu(-b,\theta) = \vert \Theta \vert^{-1}\, , \quad \mu(b,\theta)  = 0\, , \quad \theta \in \Theta .
\end{array}
\right.
\end{equation*}
Integration with respect to the trait variable $\theta$ yields
\begin{equation*}
\left\{\begin{array}{l}
- \displaystyle \left( \int_{\Theta} g_\tau(\theta) \mu(x,\theta) d\theta \right)_{\xi\xi} = r \nu(\xi) (1 - \nu(\xi)), \quad \xi \in \R,\\
\nu(-b) = 1\, , \quad \nu(b)  = 0\, .
\end{array}
\right.
\end{equation*}
Take a point $\xi_0$ where $\int_{\Theta} g_\tau(\theta) \mu(\xi,\theta) d\theta$ attains a maximum. At this point, one has necessarily $\nu(\xi_0) \leq 1$. The following sequence of inequalities {holds true} for all $\xi \in {\left(-b,b\right)}$:
\begin{multline*}
{\theta_{\text{min}} \nu(\xi)} = g_\tau(\theta_{min}) \nu(\xi) = g_\tau(\theta_{min})  \int_\Theta \mu(\xi,\theta) d\theta \leq \int_{\Theta} g_\tau(\theta)  \mu(\xi,\theta) d\theta \\ \leq \int_{\Theta} g_\tau(\theta) \mu(x_0,\theta) d\theta \leq g_\tau(\theta_{max}) \nu(x_0) \leq g_\tau(\theta_{max}),
\end{multline*}
and give
\begin{equation*}
{\forall \xi \in ( -b , b ), \quad \nu(\xi) \leq \frac{g_\tau(\theta_{max})}{\theta_{min}} \leq \frac{\theta_{max}}{\theta_{min}}.}
\end{equation*}
Now, the Harnack inequality of Proposition \ref{Harnack} gives
\begin{equation*}
\forall (\xi,\theta) \in (-b,b) \times \Theta, \qquad n(\xi,\theta) \leq \frac{C(b)}{\vert \Theta \vert } \nu(\xi)  \leq \frac{C(b)}{\vert \Theta \vert} \frac{\theta_{max}}{\theta_{min}}.
\end{equation*}
\end{proof}

\subsubsection{Non-existence of solutions of the slab problem when $c=0$.}

\begin{lemma}{\bf (Lower bound for $\nu(0)$ when $c=0$).}\label{bottom}
There exists $\epsilon_0 > 0$ such that if $a$ is large enough, then for all $\tau \in [0,1]$, any solution of the slab problem $(c=0,\mu)$ satisfies $\nu(0) > \epsilon_0$. 
\end{lemma}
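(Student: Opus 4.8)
The plan is to argue by contradiction: suppose that for every $\epsilon_0 > 0$ there is some large $a$ and some $\tau \in [0,1]$ for which the slab problem $P_{\tau,a}$ admits a solution $(0,\mu)$ with $\nu(0) \leq \epsilon_0$; then I will show that $\nu$ must in fact vanish identically on a large interval, contradicting the boundary condition $\nu(-a) = |\Theta|^{-1} > 0$. The starting point is the scalar elliptic inequality for the weighted density already derived in the proof of Lemma \ref{lem:nc=0}: writing $m(\xi) := \int_\Theta g_\tau(\theta)\mu(\xi,\theta)\,d\theta$, one has $-m'' = r\nu(1-\nu)$ on $(-a,a)$ together with the comparisons $\theta_{\min}\nu(\xi) \leq m(\xi) \leq \theta_{\max}\nu(\xi)$ and, from Lemma \ref{lem:nc=0}, the uniform bound $\nu \leq \theta_{\max}/\theta_{\min} =: M$ on every fixed compact subinterval. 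In particular $-m'' = r\nu(1-\nu) \geq -rM^2$ is bounded below, so $m$ is a concave-up-to-a-bounded-perturbation function; more usefully, since $1 - \nu$ can only be negative when $\nu \geq 1$ and $\nu$ is bounded by $M$, we have $-m'' \geq r\nu(1 - M)$, i.e. a linear lower bound $-m'' + r(M-1)\,\theta_{\min}^{-1} m \geq 0$ is false in general, so instead I would keep the inequality in the form $-m'' \geq -cr\,\nu \geq -(cr/\theta_{\min})\,m$ for a suitable constant, giving a differential inequality $m'' \leq K m$ with $K = rM/\theta_{\min}$.

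From $m'' \leq K m$, $m \geq 0$, $m(-a) = g_\tau(\theta_{\min})|\Theta|^{-1} \in [\theta_{\min}|\Theta|^{-1}, \theta_{\max}|\Theta|^{-1}]$ and $m(a) \leq \theta_{\max}\nu(a) = 0$, the plan is to obtain a lower bound on $m(0)$ — hence on $\nu(0)$ — that is independent of $a$ and $\tau$. Indeed, comparing $m$ with the solution $\overline m$ of $\overline m'' = K\overline m$ on $(-a,0)$ matching the boundary data $\overline m(-a) = m(-a)$, $\overline m(0) = m(0)$: since $m'' \leq K m \leq K \overline m$ fails to be directly comparable, I would rather use the maximum principle for the operator $-\partial_{\xi\xi} + K$ applied on $(-a,0)$ to the function $m$, using that $-m'' + Km \geq 0$ there; comparing with the explicit supersolution built from $\cosh$ and $\sinh$ of $\sqrt{K}\,\xi$ that interpolates the boundary values $m(-a)$ and $m(0)$, one gets $m(\xi) \geq$ (that interpolant), and evaluating at a point near $-a$ forces $m(0) \geq \eta\, m(-a)$ for some $\eta = \eta(K) > 0$ once $a$ is large enough that the $e^{-\sqrt K a}$ terms are negligible. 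This yields $\nu(0) \geq m(0)/\theta_{\max} \geq \eta\,\theta_{\min}/(\theta_{\max}|\Theta|) =: \epsilon_0$, a constant depending only on the fixed data $r,\theta_{\min},\theta_{\max},|\Theta|$ (through $M$ and $K$), and uniform in $\tau \in [0,1]$ since all estimates on $g_\tau$ are uniform there.

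The main obstacle I anticipate is making the comparison-principle step genuinely uniform in $a$ and $\tau$ while only controlling the \emph{weighted} density $m$ rather than $\nu$ itself — the nonlinearity $r\nu(1-\nu)$ has the wrong sign where $\nu > 1$, so one cannot simply invoke a clean maximum principle, and the bound $\nu \leq M$ from Lemma \ref{lem:nc=0} is only available on fixed compact sets (with constant $C(b)$ depending on $b$), not on all of $(-a,a)$. The way around this is to do the comparison on a fixed interval, say $(-b,b)$ with $b$ chosen once and for all large enough that $\eta(K,b) > 0$, absorb the bad sign of the reaction into the constant $K$ via the compact-set bound $\nu \leq C(b)\theta_{\max}/(|\Theta|\theta_{\min})$, and only then let $a \to \infty$ so that the slab endpoint data at $\pm a$ do not interfere; the boundary value $m(-a) \geq \theta_{\min}|\Theta|^{-1}$ at the true left endpoint must be propagated inward to $\xi = -b$ by a separate, cruder sweep (again via $-m'' + Km \geq 0$ on $(-a,-b)$), which is where one must be careful that the resulting lower bound at $-b$ is still bounded away from zero uniformly in $a$ — it is, because the relevant supersolution comparison only loses a factor $\tanh(\sqrt K(a-b))/\tanh(\sqrt K a) \to 1$.
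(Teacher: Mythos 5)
Your reduction to the weighted density $m(\xi)=\int_\Theta g_\tau(\theta)\mu(\xi,\theta)\,d\theta$ and the identity $-m''=r\nu(1-\nu)$ is a reasonable starting point, but the differential inequality you extract from it, $m''\leq Km$ with $K=rM/\theta_{\min}>0$, has the \emph{wrong sign} for the conclusion you want. This inequality says that $m$ is a nonnegative supersolution of the coercive operator $-\partial_{\xi\xi}+K$, and such supersolutions can perfectly well be exponentially small at interior points: the explicit function
\[
m_0(\xi)\;=\;m(-a)\,\frac{\sinh\!\bigl(\sqrt K\,(a-\xi)\bigr)}{\sinh\!\bigl(2\sqrt K\,a\bigr)}
\]
satisfies $m_0''=Km_0$, $m_0(-a)=m(-a)$, $m_0(a)=0$, and yet $m_0(0)\sim m(-a)\,e^{-\sqrt K\,a}\to0$ as $a\to\infty$. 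Hence the step where you ``evaluate at a point near $-a$ to force $m(0)\geq\eta\,m(-a)$ with $\eta>0$ uniform in $a$'' does not follow from the maximum principle: the comparison $m\geq\varphi$ against the $\cosh/\sinh$ interpolant $\varphi$ matching $m(-a)$ and $m(0)$ is vacuous, because $\varphi$ itself depends on $m(0)$ and degenerates harmlessly to the decaying profile when $m(0)\to0$. In short, the bound $\nu\leq M$ coming from Lemma~\ref{lem:nc=0} is used to tame the quadratic term $-r\nu^2$, but this produces a zero-order term with the \emph{helpful} sign for the maximum principle, which is exactly what you do \emph{not} want here.

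The missing idea is that one must exploit the smallness hypothesis $\nu(0)\leq\eps_0$ itself, not merely the fixed upper bound $\nu\leq M$, to turn the reaction into an \emph{amplifying} zero-order term. Via the Harnack inequality (Proposition~\ref{Harnack}), $\nu(0)\leq\eps_0$ controls $\|\mu\|_{L^\infty((-b,b)\times\Theta)}$ for any fixed $b$, so on $(-b,b)$ one has, say, $\nu\leq 1/2$ once $\eps_0$ is small; then $-m''=r\nu(1-\nu)\geq \tfrac{r}{2}\nu\geq \tfrac{r}{2\theta_{\max}}\,m$, i.e. $m''+\lambda m\leq0$ with $\lambda>0$ \emph{fixed}. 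Comparing this with a positive Dirichlet eigenfunction on $(-b,b)$ whose eigenvalue $(\pi/2b)^2$ is strictly below $\lambda$ (pick $b$ large once and for all) and running a sliding argument yields a contradiction at the interior touching point. This is the correct one-dimensional shadow of the paper's proof; the paper itself works directly in the two variables $(\xi,\theta)$, introducing the Dirichlet--Neumann eigenvalue problem \eqref{eq:evpb}, showing $\psi_b\to r$ as $b\to\infty$, then using the $L^\infty$ bound of Lemma~\ref{lem:nc=0} together with Harnack to check that $\mu$ is a strict subsolution of that eigenvalue problem when $\nu(0)<\frac{1}{2C(b)}$, which the sliding argument (as in Lemma~\ref{upboundc}) then rules out. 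So the route via $m$ is salvageable in spirit, but only once you reverse the role of $\nu(0)\leq\eps_0$: it must feed into the reaction-term estimate, not just into a crude global bound on $\nu$.
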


\begin{proof}[{\bf Proof of Lemma \ref{bottom}}]
We adapt {an} argument from {\cite{Alfaro}}. It is a bit simpler here since {the} trait space is bounded. For $b>0$, consider the following spectral problem in both variables $(\xi,\theta)$:
{\begin{equation}\label{eq:evpb}
\left\{\begin{array}{l}
g_\tau(\theta) \left( \varphi_b \right)_{\xi\xi} + \alpha \left( \varphi_b \right)_{\theta\theta} + r \varphi_b   = \psi_b \varphi_b \,, \quad (\xi,\theta) \in \left( - b , b\right) \times \Theta,\medskip\\
\left( \varphi_b \right)_\theta(\xi,\theta_{\min}) = \left( \varphi_b \right)_\theta(\xi,\theta_{\max}) = 0\,, \quad \xi \in \left( - b , b\right),\medskip\\
\varphi_b(-b,\theta) = 0 \, , \quad \varphi_b(b,\theta)  = 0\,, \quad \theta \in \Theta.
\end{array}
\right.
\end{equation}
One can rescale the problem in the space direction setting $\xi = b \zeta$:
\begin{equation}\label{eq:evpb}
\left\{\begin{array}{l}
\dfrac{g_\tau(\theta)}{b^2} \left( \varphi_b \right)_{\zeta \zeta} + \alpha \left( \varphi_b \right)_{\theta\theta} + r \varphi_b   = \psi_b \varphi_b \,, \quad (\zeta,\theta) \in \left( - 1 , 1\right) \times \Theta \, ,\medskip\\
\left( \varphi_b \right)_\theta(\zeta,\theta_{\min}) = \left( \varphi_b \right)_\theta(\zeta,\theta_{\max}) = 0\,,\quad  \zeta \in \left( - 1 , 1\right) ,\medskip\\
\varphi_b(-1,\theta) = 0 \, , \quad \varphi_b(1,\theta)  = 0\,, \quad \theta \in \Theta.
\end{array}
\right.
\end{equation}}
{Using an Hamilton-Jacobi technique (see for instance \cite{Bouin-Mirrahimi} and all the references therein), one can prove that $\lim_{b \to +\infty} \psi_b = r$.} 
As a consequence, we fix $b$ sufficiently large to have $\psi_b > \frac{r}{2}$.

Thanks to the \textit{a priori} estimate on $\mu$ obtained in Lemma \ref{lem:nc=0}, {and} by the Harnack inequality {(of Proposition \ref{Harnack})}, there exists a constant $C(b)$ which does not depend on $a > b$ such that
\begin{equation*}
\forall \theta \in \Theta, \qquad C(b) \mu(0, \theta) \geq C(b) \inf_{\left( -b , b\right) \times \Theta} \mu(\xi , \theta) \geq \|\mu\|_{L^\infty((-b,b)\times \Theta)}.
\end{equation*}
To compare \eqref{eq:slab} to \eqref{eq:evpb}, one has, for all $(\xi,\theta) \in [-b,b] \times \Theta$,
\begin{equation*}
g_\tau(\theta) \mu_{\xi\xi} + \mu_{\theta\theta} + r \mu = r \mu \nu \leq  r \mu \vert \Theta \vert \|\mu\|_{L^\infty((-b,b)\times \Theta)} \leq  r C \nu(0) \mu(\xi,\theta). 
\end{equation*}
We deduce from this computation that as soon as $\nu(0) \leq \frac{1}{2C(b)}$, one has 

\begin{equation*}
\forall (\xi,\theta) \in [-b,b] \times \Theta, \quad r C \nu(0) \mu(\xi,\theta) < {\psi_b} \mu(\xi,\theta),
\end{equation*}
and this means that $\mu$ is a subsolution of \eqref{eq:evpb}. We can now use the same arguments as for the proof of Lemma \ref{upboundc}. We define
\begin{equation*}
A_0 = \max \left\lbrace A \; \vert \; \forall (\xi, \theta) \in \left[ -b , b \right] \times \Theta, \; A \varphi_b (\xi, \theta) < \mu(\xi,\theta) \right\rbrace,
\end{equation*}
so that $u_{A_0}:= \mu - A_0 \varphi_b$ has a zero minimum in $(\xi_0,\theta_0)$ and satisfies 
\begin{equation*}
\left\{\begin{array}{l}
- g_\tau(\theta) \left( u_{A_0} \right)_{\xi\xi} - \alpha \left(u_{A_0} \right)_{\theta\theta} - r u_{A_0}  > - \psi_b u_{A_0}\,, \quad (\xi,\theta) \in \left( - b , b\right) \times \Theta \, ,\medskip\\
\left( u_{A_0} \right)_\theta(\xi,\theta_{\min}) = \left( u_{A_0} \right)_\theta(\xi,\theta_{\max}) = 0\,, \quad \xi \in \left( - b , b\right) ,\medskip\\
u_{A_0}(-b,\theta) > 0 \, , \quad u_{A_0}(b,\theta)  > 0\,, \quad \theta \in \Theta.
\end{array}
\right.
\end{equation*}
For the same reasons as in Lemma \ref{upboundc} this cannot hold, so that necessarily {$ \nu(0) > \eps_0:= \frac{1}{2C(b)} $}.

\end{proof}

\subsection{Uniform bound over the steady states, for $c \in \left[ 0 , c^* \right]$.}

The previous Subsection is central in our analysis. Indeed, it gives a bounded set of speeds where to apply the Leray-Schauder topological degree argument, namely we can restrict ourselves to speeds $c \in \left[ 0 , c^* \right]$. Based on this observation, we are now able to derive a uniform $L^\infty$ estimate (with respect to $a$ and $\tau$) for solutions $\mu$ of \eqref{eq:slab}. This is done in Lemma \ref{lem:nc} below.

\begin{lemma}{\bf (A priori estimates, $c \in \left[ 0 , c^* \right]$).}\label{lem:nc}

Assume $c \in \left[ 0 , c^* \right]$, $\tau \in [0,1]$ and $a\geq 1$. Then there exists a constant $C_0$ depending only on $\theta_{\min}$ and  $\vert \Theta \vert$ such that any solution $(c,\mu)$ of the slab problem $P_{a,\tau}$ satisfies
\begin{equation*}
\Vert \mu \Vert_{L^\infty\left( (-a,a) \times \Theta \right) }\leq C_0\,.
\end{equation*}
\end{lemma}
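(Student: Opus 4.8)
The plan is to argue by contradiction through a blow-up (elliptic rescaling) argument. What makes this harder than the case $c=0$ is that one cannot write any useful equation or inequation on $\nu$, because of the term $g_\tau(\theta)\mu_{\xi\xi}$; the two facts that replace it are (a) at a point where $\mu$ is maximal, the sign of the reaction term forces $\nu\le 1$ there, and (b) the only quantitative information on $\nu$ needed inside the rescaling is the trivial bound $\nu\le|\Theta|\,\|\mu\|_{L^\infty}$, which is exactly what keeps the rescaled nonlocal source $O(1)$.

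Assume the statement fails: pick $a_k\ge 1$, $\tau_k\in[0,1]$, $c_k\in[0,c^*]$ and solutions $(c_k,\mu_k)$ of $P_{a_k,\tau_k}$ with $M_k:=\|\mu_k\|_{L^\infty((-a_k,a_k)\times\Theta)}\to+\infty$. Since $\mu_k$ equals $|\Theta|^{-1}$ or $0$ on the vertical parts of the boundary, for $k$ large the maximum $M_k=\mu_k(\xi_k,\theta_k)$ is attained with $\xi_k\in(-a_k,a_k)$ and $\theta_k\in\overline\Theta$. Evaluating the equation at $(\xi_k,\theta_k)$ and using $(\mu_k)_\xi(\xi_k,\theta_k)=0$, $(\mu_k)_{\xi\xi}(\xi_k,\theta_k)\le0$, and $(\mu_k)_{\theta\theta}(\xi_k,\theta_k)\le0$ (the last inequality holding also when $\theta_k\in\partial\Theta$, by the Neumann condition and a Taylor expansion), the left-hand side is $\ge0$, hence $rM_k\bigl(1-\nu_k(\xi_k)\bigr)\ge0$, that is $\nu_k(\xi_k)\le1$. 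The $\theta$-profile at $\xi_k$ is therefore strongly concentrated — height $M_k$ but $\theta$-mass $\le1$ — and this is the configuration we shall rule out.

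Rescale at the natural scale $\ell_k:=M_k^{-1/2}\to0$: set $\tilde\mu_k(y,z):=M_k^{-1}\mu_k(\xi_k+\ell_k y,\theta_k+\ell_k z)$ on the dilated domain $\Omega_k$. Then $0\le\tilde\mu_k\le1$, $\tilde\mu_k(0,0)=1$, and
\begin{equation*}
-\frac{c_k}{\sqrt{M_k}}(\tilde\mu_k)_y-g_{\tau_k}(\theta_k+\ell_k z)(\tilde\mu_k)_{yy}-\alpha(\tilde\mu_k)_{zz}=\frac{r}{M_k}\,\tilde\mu_k\Bigl(1-\nu_k(\xi_k+\ell_k y)\Bigr),
\end{equation*}
with the Neumann condition on the image of $\partial\Theta$, $\tilde\mu_k=0$ on the image of $\{\xi=a_k\}$, and $\tilde\mu_k=|\Theta|^{-1}/M_k$ on the image of $\{\xi=-a_k\}$. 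Here $c_k/\sqrt{M_k}\to0$; the principal part is uniformly elliptic since $g_{\tau_k}\in[\theta_{\min},\theta_{\max}]$; and the source is bounded in $L^\infty_{loc}$, because $\tfrac{r}{M_k}\tilde\mu_k\le\tfrac r{M_k}$ and $\tfrac{\nu_k(\xi_k+\ell_k y)}{M_k}=\tfrac1{\sqrt{M_k}}\int\tilde\mu_k(y,z)\,dz\le|\Theta|$ — precisely the place where the global maximality $\nu_k\le|\Theta|M_k$ enters. Interior and up-to-the-boundary elliptic estimates then give a uniform $W^{2,p}_{loc}$ (hence $C^{1,\beta}_{loc}$) bound on $\tilde\mu_k$. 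Two consequences follow: first, since $\tilde\mu_k(0,0)=1$ while $\tilde\mu_k=o(1)$ on the images of $\{\xi=\pm a_k\}$, the uniform Lipschitz bound forces those images to stay at a fixed positive rescaled distance from the origin, so the blow-up is automatically "centered"; second, along a subsequence $\tilde\mu_k\to\tilde\mu_\infty$ in $C^0_{loc}$, with $\tilde\mu_\infty$ continuous, $\ge0$, and $\tilde\mu_\infty(0,0)=1$. In particular $\tilde\mu_k\ge\tfrac13$ on a fixed ball $B_{\rho_0}\cap\Omega_k$ around the origin for $k$ large, which means $\mu_k(\xi_k,\theta)\ge M_k/3$ for all $\theta\in\Theta$ with $|\theta-\theta_k|\le\rho_0\ell_k$; integrating in $\theta$ gives $\nu_k(\xi_k)\ge\tfrac{M_k}{3}\,\rho_0\ell_k=\tfrac{\rho_0}{3}\sqrt{M_k}\to+\infty$, contradicting $\nu_k(\xi_k)\le1$. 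This proves the uniform bound $C_0$ (uniform in $a$, $\tau$, $\epsilon$).

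I expect the main obstacle to be conceptual rather than computational: it is the uniform $L^\infty$ control itself — the difficulty announced in the abstract, coming from the interplay of the nonlocal term $\nu$ with the heterogeneous diffusivity $\theta\mapsto g_\tau(\theta)$, which prevents any scalar comparison on $\nu$. The resolution is to blow up at a sequence of maximum points and to observe that no fine control of $\nu$ is required: only $\nu\le|\Theta|\,\|\mu\|_{L^\infty}$, which keeps the rescaled source $O(1)$ and thereby drives both the compactness and the centering of the blow-up. The only genuinely technical points are the sign of $(\mu_k)_{\theta\theta}$ at a maximum sitting on $\partial\Theta$, and the use of up-to-the-boundary elliptic estimates (Neumann on the image of $\partial\Theta$, Dirichlet on the images of $\{\xi=\pm a_k\}$), both of which are standard.
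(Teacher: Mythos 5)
Your blow-up argument is correct and takes a genuinely different route from the paper. Both proofs begin identically, by evaluating the equation at a maximum point $(\xi_0,\theta_0)$ (using the Neumann condition and a Taylor expansion when $\theta_0\in\partial\Theta$) to obtain $\nu(\xi_0)\leq 1$. From there the two diverge. The paper derives, via localized energy estimates on $\mu$ and then on $\mu_\theta$ (exploiting that $\nu$ depends on $\xi$ alone, so differentiating in $\theta$ does not touch the nonlocal term), a uniform bound $\|\mu_\theta\|_{H^1}^2 = O(1+K_0^2)$; it then passes to the trace $\theta\mapsto\mu(\xi_0,\theta)\in H^{3/2}(\Theta)$ and feeds the pair $(\|\mu(\xi_0,\cdot)\|_{L^1}\leq 1, \|\mu(\xi_0,\cdot)\|_{H^{3/2}}=O(1+K_0))$ into a Gagliardo--Nirenberg-type interpolation $\|g\|_{L^\infty}^3\lesssim \|g\|_{L^1}\|g\|_{H^{3/2}}^2$, yielding $K_0^3\lesssim 1+K_0^2$ and hence a bound. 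You instead rescale at the maximum with the natural length $M_k^{-1/2}$; the crucial observation that makes this work — playing the same role as the paper's ``$\nu$ depends on $\xi$ only'' trick — is that the rescaled source is kept $O(1)$ solely by the crude bound $\nu_k\leq|\Theta|M_k$, so that interior and boundary (Dirichlet/Neumann, after reflection across $\partial\Theta$ to avoid corner issues) elliptic $W^{2,p}$ estimates produce $C^{1,\beta}_{\mathrm{loc}}$ compactness, a nontrivial limit $\tilde\mu_\infty$ with $\tilde\mu_\infty(0,0)=1$, and finally the contradiction $\nu_k(\xi_k)\gtrsim\sqrt{M_k}\to\infty$ from the concentrated $\theta$-mass. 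Your proof is nonconstructive (contradiction/compactness) whereas the paper's yields an algebraic inequality on $K_0$, but both obtain a bound uniform in $a$, $\tau$ and $c\in[0,c^*]$, which is what the Leray--Schauder argument of Proposition~\ref{slabsol} requires. One small point worth spelling out in a final write-up is the argument that the rescaled $\xi$-boundary $\{\xi=\pm a_k\}$ stays at positive rescaled distance from the origin: as you indicate, the boundary $C^{1,\beta}$ estimate (valid after reflecting out the Neumann faces, which turns the rectangle into a smooth strip with only Dirichlet data) gives a uniform Lipschitz bound, and since $\tilde\mu_k(0,0)=1$ while $\tilde\mu_k=O(M_k^{-1})$ on those faces, the distance is bounded below; this should be stated before invoking interior compactness around the origin.
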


\begin{proof}[{\bf Proof of Lemma \ref{lem:nc}}]

We divide the proof into two steps. In the first step, we prove successively that $\mu$ and $\mu_\theta$ are bounded uniformly in $H^1\left( (-a,a) \times \Theta \right)$. In the second step, we use a suitable trace inequality to deduce a uniform $L^\infty\left( (-a,a) \times \Theta \right)$ estimate on $\mu$. We define $K_0(a) = \max_{[-a,a] \times \Theta} \mu$. We want to prove that $K_0(a)$ is in fact bounded uniformly in $a$.

{The argument is inspired from \cite{Berestycki-Nadin}. The principle of the proof goes as follows: The maximum principle implies that $\nu(\xi_0) \leq 1$ if $\left( \xi_0 , \theta_0 \right)$ is a maximum point for $\mu$. This does not imply that $\max \mu \leq 1$. However, we can control $\mu(\xi_0,\theta_0)$ by the non local term $\nu(\xi_0)$ providing some regularity of $\mu$ in the direction $\theta$. In order to get this additional regularity we use the particular structure of the equation (the nonlocal term does not depend on $\theta$ and is non negative).}

\quad

{\bf \# Step 0: Preliminary observations.}

\quad

Denote by $(\xi_0,\theta_0)$ a point where the maximum is reached. If the maximum is attained on the $\xi-$boundary $\xi_0 = \pm a$ then $K_0(a) \leq \vert \Theta \vert^{-1}$ by definition. If it is attained on the $\theta-$boundary $\theta_0 \in \{\theta_{\min},\theta_{\max}\}$, then the first derivative $\partial_\theta \mu$ vanishes by definition. Hence $\mu_{\theta\theta}(\xi_0,\theta_0)\leq 0$ and $\mu_{\xi\xi}(\xi_0,\theta_0)\leq 0$. The same holds true if $(\xi_0,\theta_0)$ is an interior point. Evaluating equation \eqref{eq:slab} at $(\xi_0,\theta_0)$ implies
\begin{equation*}
K_0(a) (1 - \nu(\xi_0))\geq 0\,,
\end{equation*}
and therefore $\nu_0(\xi_0) \leq 1$.

\quad

{\bf \# Step 1: Energy estimates on $\mu$.}

\quad

We derive local energy estimates.
We introduce a smooth cut-off function $\chi: \R \to [0,1]$ such that
\begin{equation*}
\begin{cases}
\chi = 1 \qquad \text{on} \qquad J_1 = \left( \xi_0 - \frac12,\xi_0 + \frac12 \right),  \medskip\\
\chi = 0 \qquad \text{outside} \qquad  J_2 = \left[\xi_0 - 1,\xi_0 + 1\right]. 
\end{cases}
\end{equation*} 
 Notice that the support of the cut-off function does not necessarily avoid the $\xi-$boundary. We also introduce the following linear corrector 
\begin{equation*}
\forall \xi \in [-a,a], \qquad m(\xi) = \frac{1}{\vert \Theta \vert} \frac{a - \xi}{2a},
\end{equation*}
which is defined such that $m(-a) = \vert \Theta \vert^{-1}$, $m(a) = 0$, and $0\leq m\leq \vert \Theta \vert^{-1}$ on $(-a,a)$. 
Testing against $(\mu - m)\chi$ over $[-a,a] \times \Theta$, we get
\begin{multline*}
- c \int_{(-a,a) \times \Theta} (\mu - m) \chi \mu_\xi d\xi d\theta -\int_{(-a,a) \times \Theta} g_\tau(\theta) (\mu - m)_{\xi\xi} (\mu - m)\chi\, d\xi d\theta \\- \int_{(-a,a) \times \Theta} \mu_{\theta\theta} (\mu-m)\chi\, d\xi d\theta =\int_{(-a,a) \times \Theta} \mu(1 - \nu)( \mu-m)\chi\, d\xi d\theta.
\end{multline*}
We now transform each term of the l.h.s. by integration by parts. We emphasize that the linear correction $m$ ensures that all the boundary terms vanish. We get 
\begin{multline*}
\int_{(-a,a) \times \Theta} g_\tau(\theta)  \left|(\mu -m)_{\xi}\right|^2 \chi\, d\xi d\theta + \int_{(-a,a) \times \Theta} \left|\mu_\theta\right|^2 \chi  \, d\xi d\theta \\ \leq \frac12 \int_{(-a,a) \times \Theta} g_\tau(\theta) (\mu-m)^2 \chi_{\xi\xi}\, d\xi d\theta  + c \frac{\vert \Theta \vert^{-1}}{2a} \int_{(-a,a) \times \Theta}  \chi (\mu-m) d\xi d\theta \\- c \int_{(-a,a) \times \Theta} \frac12 (\mu-m)^2 \chi_\xi d\xi d\theta + \int_{(-a,a) \times \Theta}  \mu^2 \chi\, d\xi d\theta + \int_{(-a,a) \times \Theta}  \mu \nu m\chi\, d\xi d\theta.
\end{multline*}
We use that $\mu \leq K_0(a)$, $\nu(\xi) \leq \vert\Theta\vert K_0(a)$, $g_\tau(\theta) \geq \theta_{\text{min}}$ and $\vert c \vert \leq c^*$ to get
\begin{multline*}
\theta_{\min}\int_{J_1 \times \Theta}  \left|\mu_\xi-m_{\xi}\right|^2 \, d\xi d\theta + \int_{J_1 \times \Theta} \left|\mu_\theta\right|^2   \, d\xi d\theta \\ \leq c^* \frac{\vert \Theta \vert^{-1}}{2a} K_0 \vert J_2 \times \Theta \vert  - c \int_{[-a,a] \times \Theta} \frac12 (\mu-m)^2 \chi_\xi d\xi d\theta \\+ \frac12\int_{(-a,a) \times \Theta} g_\tau(\theta) (\mu-m)^2    \chi_{\xi\xi}\, d\xi d\theta + \int_{J_2 \times \Theta}  K_0^2 \, d\xi d\theta +   \int_{J_2 \times \Theta} \vert \Theta \vert K_0^2 \, d\xi d\theta\,,
\end{multline*}
Then we use the pointwise inequality $| \mu_\xi - m_\xi |^2 \geq \mu_\xi^2/2 - m_\xi^2$ in the first integral of the l.h.s.:
\begin{multline*}\label{testn2}
\frac{\theta_{\min}}2\int_{J_1}  \left|\mu_{\xi}\right|^2 \, d\xi d\theta + \int_{J_1} \left|\mu_\theta\right|^2 \, d\xi d\theta   \leq \frac{K_0 c^*}{a}  + \theta_{\min} \int_{J_1}  \left|m_{\xi}\right|^2 \, d\xi d\theta \\\ + \int g_{\tau}(\theta)\left( \mu^2 + m^2 \right)    \chi_{\xi\xi}\, d\xi d\theta + c^* \int  \left( \mu^2 + m^2 \right) \chi_\xi d\xi d\theta +    4 \vert \Theta \vert K_0^2. 
\end{multline*}
{Thus, we obtain our first energy estimate: $\mu \in H^1\left( [-a,a] \times \Theta \right)$ with a uniform bound of order $\mathcal{O}\left( K_0(a)^2 \right)$ uniformly:}
\begin{equation}\label{nH1}
\min\left( \dfrac{\theta_{\min}}2 , 1\right)\int_{J_1}  \left(\left|\mu_{\xi}\right|^2 +   \left|\mu_\theta\right|^2\right) \, d\xi d\theta  \leq    C(\vert \Theta \vert,\theta_{\min},\chi)\left( 1 + K_0(a)^2\right)\,,
\end{equation}
as soon as $a \geq \frac{1}{2}$. 

We now come to the proof that $\partial_\theta \mu$ is also in $H^1\left( (-a,a) \times \Theta \right)$. We differentiate \eqref{eq:slab} with respect to $\theta$ for this purpose. Here, we use crucially that $\nu$ is a function of the variable $x$ only. Note that we cannot expect that $\mu \in H^2\left( [-a,a] \times \Theta \right)$ with a bound of order $\mathcal{O}\left( K_0(a)^2 \right)$ at this stage. But we need additional elliptic regularity in the variable $\theta$ only.
\begin{equation} \label{eq:n_theta}
\forall (\xi,\theta) \in (-a,a) \times \Theta, \qquad - c \mu_{\xi\theta} - \tau \mu_{\xi\xi} - g_\tau(\theta) \mu_{\xi\xi\theta} - \mu_{\theta\theta\theta} = \mu_\theta (1 - \nu)\, .
\end{equation}
We use the cut-off function $\widetilde \chi(\xi) = \chi(\xi_0 + 2(\xi-\xi_0))$, for which $\supp \widetilde \chi\subset J_1$, and $\chi(\xi) = 1$ on $J_{1/2} = (\xi_0 - 1/4,\xi_0 + 1/4)$. Multiplying \eqref{eq:n_theta} by $\mu_\theta\widetilde \chi$, we get after integration by parts
{\begin{multline*}\label{testntheta2}
\int_{J_1} \tau \mu_\xi \mu_{\theta \xi} \widetilde \chi  \, d\xi d\theta + \int_{J_1} \tau \mu_\xi \mu_{\theta } \widetilde \chi_\xi  \, d\xi d\theta + \int_{J_1} g_\tau(\theta) \mu_{\xi \theta} \mu_\theta \widetilde \chi_\xi  \, d\xi d\theta \\ + \int_{J_1} g_\tau(\theta) \left|\mu_{\xi \theta}\right|^2 \widetilde \chi  \, d\xi d\theta  +  \int_{J_1}   \left|\mu_{\theta\theta}\right|^2 \widetilde \chi  \, d\xi d\theta \leq \int_{J_1}   \left|\mu_{ \theta}\right|^2 \widetilde \chi  \, d\xi d\theta\,+ c \int_{J_1} \tilde \chi_\xi \frac{\vert \mu_\theta \vert^2}{2} d\xi d\theta .
\end{multline*}}
Notice that all the boundary terms vanish since $\mu_\theta = 0$ on all segments of the boundary. Using the $H^1$ estimate \eqref{nH1} obtained previously for $\mu$, we deduce
\begin{multline*}
\frac{\theta_{\min}}{2} \int_{J_{1/2}} \left \vert \mu_{\theta \xi}\right\vert^2   \, d\xi d\theta  +  \int_{J_{1/2}}   \left \vert \mu_{\theta\theta}\right \vert^2   \, d\xi d\theta \leq \left( 1 + \frac{c^*}{2} \Vert \widetilde\chi_\xi \Vert_\infty \right)\int_{J_1}   \left \vert \mu_{ \theta}\right \vert^2    \, d\xi d\theta + \frac{1}{2\theta_{\min}} \int_{J_{1}}   \left\vert \mu_\xi \right\vert^2  d\xi d\theta \\ + \frac12 \int_{J_1}\left( \left|\mu_\xi \right|^2 + \left|\mu_\theta\right|^2 \right) \left|\widetilde \chi_\xi \right|  \, d\xi d\theta 
 + \frac12 \int \theta \left|\mu_{\theta}\right|^2  \widetilde \chi_{\xi\xi}  \, d\xi d\theta
\end{multline*}
from which we conclude
\begin{equation}\label{nthetaH1}
\min\left( \dfrac{\theta_{\min}}2 , 1\right)\int_{J_1}  \left(\left|\mu_{\theta \xi}\right|^2 +   \left|\mu_{\theta\theta}\right|^2\right) \, d\xi d\theta  \leq    \overline{C}(\Theta,\theta_{\min},\chi)\left( 1 + K_0(a)^2\right)\, .
\end{equation}
This crucial computation proves that $\mu_\theta$ also belongs to $H^1\left( (-a,a) \times \Theta \right)$.

\quad 

{\bf \# Step 2: Improved regularity of the trace $\mu(\xi, \cdot )$.}

\quad

From the fact that $\mu_\theta$ is a $H^1\left( (-a,a) \times \Theta \right)$ function uniformly in $a$, we obtain that the trace function $\theta \mapsto \mu_\theta(\xi_0,\theta)$ belongs to $H^{1/2}\left( \Theta \right)$ uniformly by standard trace theorems. Therefore, $\theta \mapsto \mu(\xi_0,\theta)$ belongs to $H^{3/2}\left( \Theta \right)$ . This gives a constant $C_{tr}$ such that 
\begin{equation*}
\Vert \mu(\xi_0,\cdot) \Vert_{H^{3/2}_\theta}^2 \leq C_{tr} \Vert \mu_{\theta} \Vert_{H^{1}_{x,\theta}}^2
\end{equation*}
This enables to control the variations of the density $\mu$ in the direction $\theta$. 

Indeed, by interpolation inequality there exists a constant $C_{\text{int}}$ such that in the variable $\theta$, at a given point $\xi_0$:
\[
\begin{cases}
\|\mu\left( \xi_0, \cdot \right)\|_{L^\infty_\theta}^3 \leq C_{int} \|\mu\left( \xi_0, \cdot \right)\|_{L^1_\theta} \|\mu\left( \xi_0, \cdot \right)\|_{H^{3/2}_\theta}^2 & \mathrm{if}\quad \dfrac{\|\mu\left( \xi_0, \cdot \right)\|_{L^1_\theta}}{\|\mu\left( \xi_0, \cdot \right)\|_{H^{3/2}_\theta}} \leq \dfrac{1}{C_{int}} \medskip ,\\
\|\mu\left( \xi_0, \cdot \right)\|_{L^\infty_\theta} \leq C_{int} \|\mu\left( \xi_0, \cdot \right)\|_{L ^1_\theta} & \mathrm{otherwise,} 
\end{cases} \]
(we refer to the Appendix for a proof of this inequality). Recall that $\nu(\xi_0) = \Vert \mu(\xi_0,\cdot) \Vert_{L^1_\theta} \leq 1$. It yields, combining with estimates \eqref{nH1} and  \eqref{nthetaH1} of {\bf \# Step 1}:
\[
\begin{cases}
K_0(a)^3 \leq C_{int} C_{tr} \overline{C} \nu(\xi_0) \left(1 + K_0(a)^2 \right)& \mathrm{if}\quad \dfrac{\nu(\xi_0)}{\| \mu(\xi_0,\cdot)\|_{H^{3/2}_\theta}} \leq \dfrac{1}{C_{int}}, \medskip \\
K_0(a) \leq C_{int} \nu(\xi_0) & \mathrm{otherwise}. 
\end{cases} \]
In both cases, this bounds $K_0(a)$ uniformly with respect to $a >0$.
This concludes the proof of Lemma \ref{lem:nc}.

\end{proof}
 
\subsection{Resolution of the problem in the slab.}

We now finish the proof of the existence of solutions of \eqref{eq:slab}. As previously explained, it consists in a Leray-Schauder topological degree argument. All uniform estimates derived in the previous Sections are key points to obtain \textit{a priori} estimates on steady states of suitable operators. We then simplify the problem with homotopy invariances. {We begin with a very classical problem: the construction of KPP travelling waves for the Fisher-KPP equation in a slab.}
\begin{lemma}\label{propKPP}
Let us consider the following Fisher-KPP problem in the slab $(-a,a)$:
\begin{equation*}
\left\{\begin{array}{l}
-c \nu_{\xi} -  \theta_{min} \nu_{\xi\xi} = r \nu (1 - \nu )\,, \qquad \xi \in \left( -a,a \right), \medskip\\
 \nu(-a) = 1\, , \quad \nu(a)  = 0\, .
\end{array}
\right.
\end{equation*}

One has the following properties:

\begin{enumerate}
\item For a given $c$, there exists a unique decreasing solution {$\nu^c \in [0,1]$. Moreover, the function $c \to \nu^c$ is continuous and decreasing.} 
\item\label{2} There exists $\eps^* > 0$ (independent of $a$) such that all solution with $c = 0$ satisfies $\nu_{c=0}(0) > \eps^*$. 
\item\label{3} For all $\eps > 0$, there exists $a(\eps)$ such that for all {$c > 2 \sqrt{r \theta_{min}}$, $\nu (0) < \eps$} for $a \geq a(\eps)$. 
\item {As a corollary of \ref{2} and \ref{3}, for all $\eps < \eps^*$, there exists a unique $c_0 \in \, [0, 2 \sqrt{r \theta_{min}}]$} such that $\nu_{c_0}(0) = \eps$ for $a \geq a(\eps)$.  
\end{enumerate}
\end{lemma}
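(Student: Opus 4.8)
The plan is to handle the four items in turn, each a by-now standard feature of KPP-type fronts on a bounded interval; only Item 3 calls for a slightly careful choice. For Item 1 I would obtain a decreasing solution $\nu^c$ with $0\le\nu^c\le1$ by monotone iteration between the sub- and supersolutions $\underline\nu\equiv0$ and $\overline\nu\equiv1$, the nonlinearity $f(s)=rs(1-s)$ being Lipschitz and the iterates staying monotone in $\xi$. Uniqueness among $[0,1]$-valued (equivalently decreasing) solutions follows from the sliding method, using that $s\mapsto f(s)/s=r(1-s)$ is decreasing, and strictness of the monotonicity in $\xi$ from the strong maximum principle applied to $w=\partial_\xi\nu^c$ (an interior zero of $w$ would force $\nu^c$ constant, contradicting the boundary values). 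Continuity of $c\mapsto\nu^c$ then results from uniform interior elliptic estimates together with this uniqueness. Finally, if $c_1<c_2$ then, since $\partial_\xi\nu^{c_2}\le0$, the function $\nu^{c_2}$ is a subsolution of the $c_1$-problem with identical boundary data, hence $\nu^{c_2}<\nu^{c_1}$ in $(-a,a)$ by the strong maximum principle; in particular $c\mapsto\nu^c(0)$ is strictly decreasing.

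For Item 2 there is a convenient shortcut: when $c=0$ the equation is $-\theta_{\min}\nu_{\xi\xi}=r\nu(1-\nu)\ge0$, so $\nu$ is concave on $[-a,a]$, and a concave function with $\nu(-a)=1$, $\nu(a)=0$ lies above the chord through its endpoints, whence $\nu(0)\ge\tfrac12$; equality would force $\nu$ to coincide with that chord on all of $[-a,a]$, hence $\nu(1-\nu)\equiv0$, which is impossible. So $\nu_{c=0}(0)>\tfrac12=:\eps^*$, a bound independent of $a$. Item 4 is then immediate from Items 1--3: the map $g(c):=\nu^c(0)$ is continuous and strictly decreasing on $[0,+\infty)$ with $g(0)>\eps^*>\eps$, while Item 3 gives $g(c)<\eps$ for every $c>2\sqrt{r\theta_{\min}}$ once $a\ge a(\eps)$, hence $g\bigl(2\sqrt{r\theta_{\min}}\bigr)\le\eps$ by continuity; the intermediate value theorem and strict monotonicity produce a unique $c_0\in(0,2\sqrt{r\theta_{\min}}]$ with $g(c_0)=\eps$.

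The substance of the lemma is Item 3. Let $\lambda_\pm=\tfrac{1}{2\theta_{\min}}\bigl(c\pm\sqrt{c^2-4r\theta_{\min}}\bigr)$ be the roots of $\theta_{\min}\lambda^2-c\lambda+r=0$, which are real and positive since $c>2\sqrt{r\theta_{\min}}$, so that $\overline\nu(\xi):=e^{-\lambda_+(\xi+a)}$ solves the linearised equation $-c\overline\nu_\xi-\theta_{\min}\overline\nu_{\xi\xi}=r\overline\nu$ and, being nonnegative, is a supersolution of the slab problem with $\overline\nu(-a)=1=\nu(-a)$ and $\overline\nu(a)\ge0=\nu(a)$. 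Since $c>2\sqrt{r\theta_{\min}}$, the operator $-\theta_{\min}\partial_{\xi\xi}-c\partial_\xi-r$ has positive principal Dirichlet eigenvalue on every bounded interval, so the generalized maximum principle applies to $\overline\nu-\nu$ and yields $\nu\le\overline\nu$ on $[-a,a]$, in particular $\nu(0)\le e^{-\lambda_+a}$.

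The point I expect to be the main obstacle — and the reason the above choice matters — is uniformity in $c$. The supersolution built from the slow rate $\lambda_-$ is useless here, since $\lambda_-\to0$ as $c\to+\infty$ and then the threshold $a(\eps)$ would blow up with $c$. Using the fast rate $\lambda_+$ fixes this: from $\lambda_+\lambda_-=r/\theta_{\min}$ and $\lambda_+\ge\lambda_-$ one gets $\lambda_+\ge\sqrt{r/\theta_{\min}}$ for every $c\ge2\sqrt{r\theta_{\min}}$, hence $\nu(0)\le e^{-\sqrt{r/\theta_{\min}}\,a}<\eps$ as soon as $a\ge a(\eps):=\sqrt{\theta_{\min}/r}\,\log(1/\eps)$, a threshold independent of $c$. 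All the remaining ingredients — monotone iteration, the sliding method, interior elliptic estimates — are classical.
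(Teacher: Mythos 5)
Your proposal is correct and, in places, genuinely sharper than the paper's own route. The authors dispose of Items 2--4 with a one-line remark that ``the proofs of Lemmas \ref{upboundc} and \ref{bottom} can be adapted,'' whereas you supply the scalar analogues directly, and for Item 2 you replace the heavier machinery by an elementary observation.

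The main methodological divergence is in Item 2. The paper's Lemma \ref{bottom} goes through a Dirichlet eigenvalue problem on the slab and a Hamilton--Jacobi limit to show that the principal eigenvalue tends to $r$, then a touching argument; the scalar version of that works here too but is disproportionate. Your concavity shortcut --- when $c=0$, the right-hand side $r\nu(1-\nu)\ge 0$ makes $\nu$ concave, so $\nu(0)\ge \tfrac12$ from the boundary data, with strictness since $\nu$ cannot coincide with the linear chord --- is far more economical and produces the explicit constant $\eps^*=\tfrac12$, uniform in $a$, which the paper never makes explicit. For Item 3 your route is the scalar mirror of Lemma \ref{upboundc}: you build an exponential supersolution dominating $\nu$ and read off the bound $\nu(0)\le e^{-\lambda_+ a}$. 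Note that your observation $\lambda_+\ge\sqrt{r/\theta_{\min}}$ for $c\ge 2\sqrt{r\theta_{\min}}$ lands precisely on the fixed decay rate $\lambda^{*}_{\mathrm{KPP}}=\sqrt{r/\theta_{\min}}$ associated to the minimal speed; the paper's Lemma \ref{upboundc} uses that fixed rate from the start, so the two bounds are identical, but your phrasing makes transparent why uniformity in $c$ holds. For Item 1 you and the paper use essentially the same ingredients (the integrating-factor identity $\bigl(\nu_\xi e^{c\xi/\theta_{\min}}\bigr)_\xi\le 0$ for monotonicity in $\xi$, and the subsolution/maximum-principle argument for monotonicity in $c$); the paper cites Aronson--Weinberger for existence and uniqueness while you invoke monotone iteration and the sliding method, which is a standard equivalent. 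One small point worth stating explicitly in your Item 1: the step ``$\nu^{c_2}$ is a subsolution, hence $\nu^{c_2}\le\nu^{c_1}$'' is not a raw comparison principle (the nonlinearity is not monotone) but relies on first having uniqueness, then running the monotone iteration upward from $\nu^{c_2}$ toward the supersolution $1$ to land on $\nu^{c_1}$; you have all the pieces, but the logical order matters. Item 4 is then the intermediate value theorem as you say, and your conclusion $c_0\in(0,2\sqrt{r\theta_{\min}}]$ is a slight (harmless) sharpening of the paper's closed interval.
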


\begin{proof}[{\bf Proof of Lemma \ref{propKPP}}]
The existence and uniqueness of solutions follows from \cite{Aronson}. {Again by maximum principle arguments, $\nu \in [0,1]$}. The solution is necessarily decreasing since
\begin{equation*}
\forall \xi \in \, (-a,a), \qquad \left( \nu_\xi e^{\frac{c}{\theta_{min}} \xi} \right)_\xi \leq 0, 
\end{equation*}
and $\nu_\xi(-a) \leq 0$.
By classical arguments, the application {$c \to \nu^c$} is continuous. For the decreasing character, we write, for $c_1 < c_2$ and $v:= \nu_2 - \nu_1$:
\begin{equation*}
- c_2 v_\xi   - \theta_{min} v_{\xi\xi} = {\left( 1 - \left( \nu_1 + \nu_2 \right) \right)  v }+ \left( c_2 - c_1 \right) \left( \nu_1 \right)_\xi ,
\end{equation*}
{so that $v$ satisfies
\begin{equation*}
\left\{\begin{array}{l}
- c_2 v_\xi   - \theta_{min} v_{\xi\xi} \leq  \left( 1 - \left( \nu_1 + \nu_2 \right) \right)  v , \qquad \xi \in \left( -a,a \right), \medskip\\
 v(-a) = 0\, , \quad v(a)  = 0\, .
\end{array}
\right.
\end{equation*} 
The maximum principle then yields that $v \leq 0$, that is $\nu_2 \leq \nu_1$}. The proofs of Lemmas \ref{upboundc} and \ref{bottom} can be adapted to prove the remainder of the Lemma. 

\end{proof}

With this $\eps^*$ in hand, we can state the main Proposition:

\begin{proposition}{\bf (Solution in the slab).}\label{slabsol}
Let $\epsilon < \min\left( \eps_0 , \eps^*\right)$. There exists $C_0 > 0$ and $a_0(\epsilon) > 0$ such that for all $a \geq a_0$, the slab problem \eqref{eq:slab}
with the normalization condition $\nu(0) = \epsilon$ has a solution $(c,\mu)$ such that
\begin{equation*}
\Vert \mu \Vert_{L^\infty \left(  [-a,a] \times \Theta \right)} \leq C_0, \qquad c \in \left] \, 0 , c^{*} \, \right]. 
\end{equation*}
\end{proposition}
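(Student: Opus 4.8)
The plan is to apply the Leray-Schauder topological degree theory to a suitable compact operator whose fixed points are exactly the solutions of the slab problem \eqref{eq:slab} with the normalization $\nu(0) = \epsilon$. First I would reformulate the problem: given $\mu$, the speed $c$ is determined by the constraint $\nu(0) = \epsilon$, so one sets up a map $\mathcal{T}$ on a space like $\mathcal{C}^{1,\beta}\left( [-a,a] \times \Theta \right) \times \R$ sending $(\mu, c)$ to the solution of the linear elliptic problem $-c' \tilde\mu_\xi - g_\tau(\theta) \tilde\mu_{\xi\xi} - \alpha \tilde\mu_{\theta\theta} = r\mu(1-\nu)$ with the same boundary data, where $c'$ is adjusted (via the implicit function theorem or a direct solve) so that the new density satisfies the renormalization at $\xi = 0$; compactness comes from elliptic regularity (the resolvent of the elliptic operator with Neumann-in-$\theta$, Dirichlet-in-$\xi$ conditions is compact), exactly as in the Krein-Rutman argument of Proposition \ref{propspec}. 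One checks that fixed points of $\mathcal{T}$ are precisely solutions of $P_{1,a}$ and that $\mu \geq 0$ by the partial maximum principle already noted.

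Next I would introduce the homotopy parameter $\tau \in [0,1]$, deforming $g_\tau$ from $g_0 \equiv \theta_{\min}$ (the constant-diffusivity case) to $g_1 = \mathrm{id}$, that is, deforming $P_{\tau,a}$ continuously. The point of the deformation is that at $\tau = 0$ the $\theta$-variable decouples: integrating in $\theta$ reduces the problem to the scalar Fisher-KPP slab problem of Lemma \ref{propKPP}, for which existence, uniqueness, and the value of the degree are classical. So the scheme is: (1) show the degree is well-defined and homotopy-invariant along $\tau \in [0,1]$, which requires uniform \emph{a priori} bounds independent of $\tau$; (2) compute the degree at $\tau = 0$ using Lemma \ref{propKPP} (there the degree is nonzero, being essentially the degree of the scalar KPP problem with the normalization $\nu(0) = \epsilon$, whose unique solution has $c = c_0 \in [0, 2\sqrt{r\theta_{\min}}]$); (3) conclude the degree is nonzero at $\tau = 1$, hence a fixed point exists.

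The \emph{a priori} estimates needed to run the degree argument are exactly the lemmas already established. Lemma \ref{upboundc} confines the speed to $c \leq c_\tau^* \leq c^*$ once $a \geq a_0(\epsilon)$, so we may restrict to the compact interval $c \in [0, c^*]$. Since $\epsilon < \min(\epsilon_0, \epsilon^*)$, Lemmas \ref{bottom} (and its Fisher-KPP analogue, item \ref{2} of Lemma \ref{propKPP}) rule out $c = 0$: any solution would have $\nu(0) > \epsilon_0 > \epsilon$, a contradiction, so in fact $c \in \,]0, c^*]$ strictly. Finally Lemma \ref{lem:nc} gives the uniform $L^\infty$ bound $\Vert \mu \Vert_{L^\infty\left( (-a,a)\times\Theta \right)} \leq C_0$ with $C_0$ depending only on $\theta_{\min}$ and $|\Theta|$, and elliptic regularity upgrades this to a uniform $\mathcal{C}^{1,\beta}$ bound, preventing fixed points from escaping to the boundary of the ball on which the degree is computed. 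Assembling these: the degree on the set $\{ \Vert \mu \Vert_{\mathcal{C}^{1,\beta}} < C_0 + 1,\ 0 < c < c^* + 1 \}$ is constant in $\tau$, nonzero at $\tau = 0$, hence nonzero at $\tau = 1$, yielding the desired solution.

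The main obstacle is the organization of the homotopy so that the speed $c$ is genuinely a continuous function of the data all along $\tau \in [0,1]$ while avoiding the degenerate endpoints $c = 0$ and $c = c^*$: one must make sure the excluded set $c = 0$ is handled uniformly in $\tau$ (this is why Lemma \ref{bottom} is stated for all $\tau \in [0,1]$), and that the operator $\mathcal{T}$ is well-defined — i.e. that for each admissible $\mu$ there is a unique $c$ realizing $\nu(0) = \epsilon$ — which relies on the monotonicity of $c \mapsto \nu(0)$ for the linearized problem, transported from the Fisher-KPP monotonicity in Lemma \ref{propKPP}. Once the operator is correctly set up, the remaining verifications (compactness, absence of fixed points on the boundary, the degree computation at $\tau=0$) are routine given the lemmas.
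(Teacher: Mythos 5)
Your overall skeleton is right (Leray--Schauder degree, homotopy in $\tau$ from the constant-diffusivity case to $g_1 = \mathrm{id}$, the \emph{a priori} bounds from Lemmas \ref{upboundc}, \ref{lem:nc=0}, \ref{bottom}, \ref{lem:nc} preventing fixed points from escaping the boundary of the ball), but there are two substantive gaps between your sketch and a complete proof.

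First, the operator setup. You propose to bake the normalization $\nu(0) = \epsilon$ into the map by ``adjusting $c$ via the implicit function theorem or a direct solve'' so that the output density satisfies the constraint. That requires proving, for each fixed source $r\mu(1-\nu)$, that the map $c \mapsto \nu_{Z^\tau(c)}(0)$ is monotone (or at least globally invertible) for the \emph{linear} elliptic problem \eqref{eq:tauslab} --- a fact the paper never establishes and which does not follow from the KPP monotonicity of Lemma \ref{propKPP}, since that is a nonlinear scalar problem, not the linear vector one. The paper avoids this entirely by defining
\begin{equation*}
\mathcal{K}_\tau : (c,\mu) \longmapsto \bigl( \epsilon - \nu_\mu(0) + c,\ Z^\tau \bigr),
\end{equation*}
so the constraint $\nu(0) = \epsilon$ only appears \emph{at a fixed point}, not in the definition of the map. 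This is simpler and makes compactness and continuity in $\tau$ straightforward.

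Second, and more importantly, the degree computation at $\tau = 0$ is not a one-liner, and your parenthetical ``the degree is nonzero, being essentially the degree of the scalar KPP problem'' is an assertion, not a proof. After the $\tau$-homotopy one is still looking at an operator on $\R \times \mathcal{C}^{1,\beta}$ in which the $c$-component and the $\mu$-component are coupled. The paper needs two \emph{additional} homotopies to decouple them: $\mathcal{M}_\tau$ replaces the nonlocal term $\nu_\mu(0)$ in the $c$-component by $\nu_{Z^0}(0)$, and $\mathcal{N}_\tau$ freezes the $\mu$-component at the KPP profile $Z_{c_0}$. Only after these steps is $\mathrm{Id} - \mathcal{N}_0$ a tensor product $(c,\mu) \mapsto \bigl( \nu_{Z_c}(0) - \epsilon,\ \mu - Z_{c_0} \bigr)$, and the degree factorizes as $(-1) \cdot 1 = -1 \neq 0$, using the strict monotonicity of $c \mapsto \nu_{Z_c}(0)$ from Lemma \ref{propKPP} and the fact that $c_0 < 2\sqrt{r\theta_{\min}} < c^*$ by \eqref{rel6} so that the fixed point is interior to $\mathcal{B}$. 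You would need to supply this tensorization (or an equivalent one) to close the argument; it is the technical heart of the proof, not a routine verification.
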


\begin{proof}[{\bf Proof of Proposition \ref{slabsol}}]
Given a non negative function $\mu(\xi,\theta)$ satisfying the boundary conditions 
\begin{equation}\label{boundv}
\forall (\xi,\theta) \in [-a,a] \times \Theta, \qquad \mu_\theta(\xi,\theta_{\min}) = \mu_\theta(\xi,\theta_{\max}) = 0, \qquad \mu(-a,\theta) = \vert \Theta \vert^{-1}\, , \qquad \mu(a,\theta)  = 0\, .
\end{equation}
we consider the one-parameter family of problems on ${(-a,a)} \times \Theta$:
\begin{equation}\label{eq:tauslab}
 \left\{\begin{array}{l}
-c Z_{\xi}^\tau - g_{\tau}(\theta) Z_{\xi\xi}^\tau - {\alpha} Z_{\theta\theta}^\tau = {r \mu (1 - \nu)}\, , \qquad  (\xi,\theta) \in (-a,a) \times \Theta, \medskip \\
Z_\theta^\tau(\xi,\theta_{\min}) = Z_\theta^\tau(\xi,\theta_{\max}) = 0\, ,\qquad \xi \in (-a,a) ,\medskip\\
Z^\tau(-a,\theta) = \vert \Theta \vert^{-1}, Z^\tau(a,\theta)  = 0, \qquad \theta \in \Theta .
\end{array}
\right.
\end{equation}
We introduce the map 
\begin{equation*}
\mathcal{K}_{\tau}: (c,\mu) \to \left( \epsilon - \nu(0) + c , Z^{\tau} \right), 
\end{equation*}
where $Z_{\tau}$ is the solution of the previous linear system \eqref{eq:tauslab}. The ellipticity of the system \eqref{eq:tauslab} gives that the map $\mathcal{K}_{\tau}$ is a compact map from $\left( X = \mathbb{R} \times \mathcal{C}^{1,{\beta}} \left( (-a,a) \times \Theta \right) , \Vert (c , \mu) \Vert = \max \left( \vert c \vert, \Vert \mu \Vert_{ \mathcal{C}^{1,\beta}} \right) \right)$ onto itself ${(\forall \beta \in (0,1))}$. Moreover, it depends continuously on the parameter $\tau \in \left[ 0 , 1 \right]$. Solving the problem $P_a$ \eqref{eq:slab} is equivalent to proving that the kernel of $\text{Id} - \mathcal{K}_1$ is non-trivial. We can now apply the Leray-Schauder theory. 

We define the open set for $\delta > 0$,
\begin{equation*}
\mathcal{B} = \left \lbrace \; (c,\mu) \; \vert \; 0 < c < c^* + \delta, \; \Vert \mu \Vert_{\mathcal{C}^{1,\beta}\left( (-a,a) \times \Theta \right)} < C_0 + \delta \right\rbrace.
\end{equation*} 
The different a priori estimates of Lemmas \ref{upboundc}, \ref{lem:nc=0}, \ref{bottom}, \ref{lem:nc} give that for all $\tau \in \left[ 0 , 1\right]$ and sufficiently large $a$, the operator $ \text{Id}  - \mathcal{K}_{\tau}$ cannot vanish on the boundary of $\mathcal{B}$. Indeed, if it vanishes on $\partial \mathcal{B}$, there exists a solution $(c,\mu)$ of \eqref{eq:slab} which also satisfies $c \in \left\lbrace 0, c^{*} + \delta \right\rbrace$ or $\Vert \mu \Vert_{\mathcal{C}^{1,\beta}\left( (-a,a) \times \Theta \right)} = C_0 + \delta$ and {$\nu(0) = \eps$}. But this is ruled out by the condition $\eps < \eps_0$. It yields by the homotopy invariance that 
\begin{equation*}
\forall \tau \in \left[ 0 ,1 \right], \quad \text{deg}\left( \text{Id} - \mathcal{K}_{1} , \mathcal{B} , 0 \right) = \text{deg}\left( \text{Id} - \mathcal{K}_{\tau} , \mathcal{B} , 0 \right) = \text{deg}\left(  \text{Id} - \mathcal{K}_{0} , \mathcal{B} , 0 \right).
\end{equation*}
We now need to compute $\text{deg} \left( \text{Id}  - \mathcal{K}_{0} , \mathcal{B} , 0 \right)$. This will be done with two supplementary homotopies. We need these two homotopies to write $\text{Id} - \mathcal{K}_0$ as a tensor of two applications whose degree with respect to $\mathcal{B}$ and $0$ are computable. We first define, {with $\nu_{Z^0}(\cdot) = \int_\Theta Z^0(\cdot,\theta) d\theta$}: 
\begin{equation*}
\mathcal{M}_{\tau}: (c,v) \to  \left( c - (1-\tau)\nu_v(0) - \tau \nu_{ Z^0}(0) + \epsilon , Z^0\right)
\end{equation*}
If there exists $(c,\mu) \in \partial \mathcal{B}$ such that $\mathcal{M}_{\tau}(c,\mu) = (c,\mu)$, then $(c,\mu)$ is such that $Z^0 = \mu$ and $\nu_{Z^0}(0) = \epsilon$. However, such a fixed point $(c,\mu)$ then satisfies 
\begin{equation}\label{eq:vKPP}
\left\{\begin{array}{l}
-c \mu_{\xi} -  \theta_{min} \mu_{\xi\xi} - \mu_{\theta\theta} = r \mu (1 - \nu )\, , \qquad \xi \in (-a,a) \times \Theta,\medskip\\
\mu_\theta(\xi,\theta_{\min}) = \mu_\theta(\xi,\theta_{\max}) = 0\,, \qquad \xi \in (-a,a), \medskip\\
\mu(-a,\theta) = \vert \Theta \vert^{-1}, \quad \mu(a,\theta)  = 0, \qquad \theta \in \Theta,  
\end{array}
\right.
\end{equation}
which is now {closely linked to} the standard Fisher-KPP equation. Indeed, {after integration w.r.t $\theta$}, $\nu$ satisfies 
\begin{equation}\label{eq:tauslab2}
\left\{\begin{array}{l}
-c \nu_{\xi} - \theta_{min} \nu_{\xi\xi} = r \nu (1 - \nu )\,, \qquad \xi \in (-a,a), \medskip\\
 \nu(-a) = 1\, , \quad \nu(a)  = 0\,,
\end{array}
\right.
\end{equation}
{and $\nu(0)=\eps$}. Given a {(unique)} solution $\nu$ of \eqref{eq:tauslab2} after Lemma \ref{propKPP}, we can solve the equation for $v$. {The solution of \eqref{eq:vKPP} is then unique thanks to the maximum principle, and reads $\mu(\xi,\theta) = \frac{\nu(\xi)}{\vert \Theta \vert}$}. As a consequence, such a fixed point cannot belong to $\partial \mathcal{B}$ after all \textit{a priori} estimates of Lemma \ref{propKPP}. Thus, by the homotopy invariance and $ \mathcal{K}_{0} = \mathcal{M}_{0}$, we have
\begin{equation*}
\text{deg} \left( \text{Id}  - \mathcal{K}_{0} , \mathcal{B} , 0 \right) = \text{deg} \left( \text{Id}  - \mathcal{M}_{1} , \mathcal{B} , 0 \right).
\end{equation*}
{The concluding arguments are now the same as in \cite{Berestycki-Nadin}.} Up to the end of the proof, we shall exhibit the dependency of $Z^0$ in $c$: $Z^0 = Z_c$. We now define our last homotopy by the formula
\begin{equation*}
\mathcal{N}_{\tau}: (c,\mu) \to \left( c + \epsilon - \nu_{Z_c}(0), \tau Z_c + (1-\tau) Z_{c_0} \right),
\end{equation*}
where $c_0$ is the unique $c \in \,[0, 2\sqrt{r\theta_{\text{min}}}]$ such that $\nu_{Z_c}(0) = \epsilon$, for $\eps < \eps^* $ and $a(\eps)$ sufficiently large (see again Lemma \ref{propKPP}). If $\mathcal{N}_{\tau}$ has a fixed point, then necessarily $\epsilon = \nu_{Z_c}(0)$ and $\mu = \tau Z_c + (1-\tau) Z_{c_0}$. This gives $\mu = Z_{c_0}$ by uniqueness of the speed $c_0$. Again, such a $\mu$ cannot belong to $\partial \mathcal{B}$ (we recall that {$c_0 < 2 \sqrt{r \theta_{min} } < c^* $ after \eqref{rel6}}). By homotopy invariance and $\mathcal{M}_1 = \mathcal{N}_1$:
\begin{equation*}
\text{deg} \left( \text{Id}  - \mathcal{K}_{1} , \mathcal{B} , 0 \right) = \text{deg} \left( \text{Id}  - \mathcal{K}_{0} , \mathcal{B} , 0 \right) = \text{deg} \left( \text{Id}  - \mathcal{M}_{1} , \mathcal{B} , 0 \right) = \text{deg} \left( \text{Id}  - \mathcal{N}_{0} , \mathcal{B} , 0 \right).
\end{equation*} 
Finally, the operator $\left( \text{Id}  - \mathcal{N}_{0} \right) (c,\mu) = \left( \nu_{Z_c}(0)  - \epsilon , \mu - Z_{c_0} \right)$ is such that $\text{deg} \left( \text{Id}  - \mathcal{N}_{0} , \mathcal{B} , 0 \right) = - 1$. Indeed, the degree of the first component is $-1$ as it is a decreasing function of $c$, and the degree of the second one is $1$.

We conclude that $\text{deg}\left( \text{Id} - \mathcal{K}_{1} , \mathcal{B} , 0 \right) = -1$. Therefore it has a non-trivial kernel whose elements are solution of the slab problem. This proves the Proposition.

\end{proof}

\section{Construction of spatial travelling waves with minimal speed $c^*$.}\label{profileminspeed}

In this Section, we now use the solution of the slab problem \eqref{eq:slab} given by Proposition \ref{slabsol} to construct a wave solution with minimal speed $c^*$. For this purpose, we first pass to the limit in the slab to obtain a profile in the whole space $\R \times \Theta$. Then we prove that this profile necessarily travels with speed $c^*$.

\subsection{Construction of a spatial travelling wave in the full space.}

\begin{lemma}\label{convslab}
Let {$\eps < \min\left( \eps_0 , \eps^* \right)$}. There exists $c_0 \in \left[ 0 , c^* \right]$ such that the system
\begin{equation}\label{convslab2}
\left\{\begin{array}{l}
- c_0 \mu_\xi - \theta \mu_{\xi\xi} - \alpha \mu_{\theta\theta} = r \mu (1 - \nu), \qquad (\xi , \theta ) \in \R \times \Theta,\medskip \\
\mu_\theta(\xi,\theta_{\min}) = \mu_\theta(\xi,\theta_{\max}) = 0, \qquad \xi \in \R, \\
\end{array}
\right.
\end{equation}
has a solution $\mu \in \mathcal{C}_b^2\left( \R \times \Theta \right)$ satisfying $\nu(0) = \eps$. 
\end{lemma}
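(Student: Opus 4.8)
The plan is to construct the full‑space profile as a locally uniform limit of the slab solutions produced by Proposition \ref{slabsol}, letting the slab half‑width $a$ tend to $+\infty$, and to carry the normalization $\nu(0)=\eps$ through the limit. Fix $\eps<\min(\eps_0,\eps^*)$. By Proposition \ref{slabsol}, for every $a$ large there is a pair $(c^a,\mu^a)$ solving the slab problem \eqref{eq:slab} with $\tau=1$ (so that $g_1\equiv\mathrm{id}$), with $c^a\in\,]0,c^*]$, $\nu^a(0)=\eps$, and $\|\mu^a\|_{L^\infty((-a,a)\times\Theta)}\le C_0$, where $C_0$ does not depend on $a$ thanks to Lemma \ref{lem:nc}. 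First I would use compactness of $[0,c^*]$ to pick a sequence $a_n\to+\infty$ along which $c^{a_n}\to c_0\in[0,c^*]$.

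Next I would upgrade the uniform $L^\infty$ bound to uniform local regularity. Since $\theta_{\min}>0$, the operator $-c^a\partial_\xi-\theta\partial_{\xi\xi}-\alpha\partial_{\theta\theta}$ is uniformly elliptic on $\R\times\Theta$, its coefficients are bounded uniformly in $a$ (recall $c^a\le c^*$), and the right‑hand side $r\mu^a(1-\nu^a)$ is bounded in $L^\infty$ by a constant depending only on $r$, $C_0$ and $|\Theta|$. For any $R>0$ and any $\xi_0\in\R$, once $a_n>|\xi_0|+R$ the box $(\xi_0-R,\xi_0+R)\times\Theta$ sits in the interior of the slab in the $\xi$‑direction, while in the $\theta$‑direction one is on the Neumann boundary, where standard up‑to‑the‑boundary elliptic regularity applies. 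Interior and Neumann‑boundary Schauder estimates then give a bound on $\|\mu^{a_n}\|_{\mathcal{C}^{2,\beta}((\xi_0-R/2,\xi_0+R/2)\times\Theta)}$ depending only on $R,\beta,\theta_{\min},\theta_{\max},c^*,r,|\Theta|,C_0$, hence independent of $n$ and of $\xi_0$; so $(\mu^{a_n})$ is bounded in $\mathcal{C}^{2,\beta}_{\mathrm{loc}}(\R\times\Theta)$.

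By Arzel\`a--Ascoli and a diagonal extraction along an exhaustion of $\R\times\Theta$ by compact sets, a further subsequence of $(\mu^{a_n})$ converges in $\mathcal{C}^2_{\mathrm{loc}}(\R\times\Theta)$ to some $\mu\ge 0$ with $\|\mu\|_{L^\infty(\R\times\Theta)}\le C_0$, and in fact $\mu\in\mathcal{C}^{2,\beta}_b(\R\times\Theta)\subset\mathcal{C}^2_b(\R\times\Theta)$ by the uniform Schauder bound. Because $\Theta$ is bounded, $\nu^{a_n}(\xi)=\int_\Theta\mu^{a_n}(\xi,\theta)\,d\theta\to\nu(\xi)=\int_\Theta\mu(\xi,\theta)\,d\theta$ locally uniformly in $\xi$; passing to the limit in the equation and in the Neumann conditions yields \eqref{convslab2} with speed $c_0$, and passing to the limit in the normalization gives $\nu(0)=\eps$.

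As for the main obstacle: the genuinely delicate estimate, the uniform $L^\infty$ bound on $\mu^a$, has already been secured in Lemma \ref{lem:nc}, so the limiting argument itself is essentially routine elliptic‑regularity bookkeeping. The two points to keep an eye on are that the nonlocal coupling causes no trouble in the limit --- this is exactly where boundedness of $\Theta$ is used, since it turns $\mathcal{C}^2_{\mathrm{loc}}$ convergence of $\mu^{a_n}$ into locally uniform convergence of $\nu^{a_n}$, ruling out any escape of mass in the trait variable --- and that the limit profile is nontrivial, which is guaranteed by the preserved normalization $\nu(0)=\eps>0$. Note that this lemma only asserts $c_0\in[0,c^*]$; that $c_0$ is actually positive and equal to $c^*$ is the content of the remainder of Section \ref{profileminspeed}.
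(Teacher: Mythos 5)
Your argument is essentially the paper's own proof: use the uniform $L^\infty$ bound from Proposition \ref{slabsol} and Lemma \ref{lem:nc}, invoke up-to-the-Neumann-boundary elliptic regularity to get uniform H\"older bounds on compact sets, then pass to the limit by Arzel\`a--Ascoli and a diagonal extraction, carrying along the speed and the normalization $\nu(0)=\eps$. The only cosmetic difference is that you push to uniform $\mathcal{C}^{2,\beta}_{\mathrm{loc}}$ bounds directly (which actually cleans up the paper's slightly terse appeal to $\mathcal{C}^{1,\beta}$ compactness, since the lemma claims $\mu\in\mathcal{C}_b^2$), but the route and all the key ingredients are the same.
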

\begin{proof}[{\bf Proof of Lemma \ref{convslab}}]
{For sufficiently large $a > a_0(\eps)$, Proposition \ref{slabsol} gives a solution $(c^a,\mu^a)$ of \eqref{eq:slab} which satisfies $c^a \in \left[ 0 , c^* \right]$, $\Vert \mu^a \Vert_{L^\infty((-a,a)\times \Theta)} \leq K_0$ and $\nu^a(0) = \eps$. As a consequence, 
\begin{equation*}
\Vert \nu^a \Vert_{L^\infty((-a,a))} \leq \vert \Theta \vert K_0. 
\end{equation*}
The elliptic regularity \cite{Gilbarg} implies that for all $\beta > 0$, $\Vert \mu^a \Vert_{\mathcal{C}^{1,\beta}((-a,a)\times \Theta)} \leq C$ for some $C>0$ uniform in $a$. Then, the Ascoli theorem gives that possibly after passing to a subsequence $a_n \to + \infty$, $(c^a,\mu^a)$ converges towards $(c_0,\mu) \in \left[ 0 , c^* \right] \times \mathcal{C}^{1,\beta}(\R \times \Theta)$ which satisfies \eqref{convslab2} and $\nu(0) = \eps$.}

\end{proof}
\begin{remark}
We do not obtain after the proof that $\sup \nu \leq 1$, and nothing is known about the behaviors at infinity at this stage. Nevertheless, we have an uniform bound $\Vert \nu \Vert_{L^\infty(\R)} \leq \vert \Theta \vert K_0$.
\end{remark}

\subsection{The profile is travelling with the minimal speed $c^*$.}

\begin{lemma}{\bf (Lower bound on the infimum).}\label{inf}
There exists $\delta > 0$ such that any solution $(c,\mu)$ of  
\begin{equation*}
\left\{\begin{array}{l}
- \theta \mu_{\xi\xi} - \alpha \mu_{\theta\theta} - c \mu_\xi = r(1 - \nu)\mu, \qquad \left(\xi , \theta \right) \in \R \times  \Theta, \qquad \medskip\\
\mu_\theta(\xi,\theta_{\min}) = \mu_\theta(\xi,\theta_{\max}) = 0, \qquad \xi \in \R, \qquad \\
\end{array}
\right.
\end{equation*}
with $c \in \left[ 0 , c^*\right]$, $\nu$ bounded and $\inf_{\xi \in \R} \nu(\xi) > 0$ satisfies $\inf_{\xi \in \R} \nu(\xi) > \delta$.
\end{lemma}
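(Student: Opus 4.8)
The aim is a uniform lower bound on $\nu = \int_\Theta \mu\, d\theta$ that does not depend on the particular solution, only on the fixed parameters. The natural strategy is to argue by contradiction: suppose there is a sequence of solutions $(c_n,\mu_n)$ with $\delta_n := \inf_\xi \nu_n(\xi) > 0$ but $\delta_n \to 0$. For each $n$ pick $\xi_n$ with $\nu_n(\xi_n)$ close to $\delta_n$, translate so that $\xi_n = 0$ (the equation is invariant under spatial translation), and pass to the limit. To make this work I first need local compactness: since $\nu_n$ is bounded (say by some $M$ independent of $n$, which follows from the a priori bounds available once $c$ is confined to $[0,c^*]$ — as in Lemma \ref{lem:nc} the argument is local and survives translation), elliptic regularity \cite{Gilbarg} together with the Harnack inequality of Proposition \ref{Harnack} gives uniform $\mathcal{C}^{1,\beta}_{loc}$ bounds on $\mu_n$, hence after extracting a subsequence $\mu_n \to \mu_\infty$ in $\mathcal{C}^2_{loc}(\R\times\Theta)$ and $c_n \to c_\infty \in [0,c^*]$.

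\textbf{The limiting object.} Along the subsequence, $\nu_n \to \nu_\infty := \int_\Theta \mu_\infty\, d\theta$ locally uniformly, and by construction $\nu_\infty(0) = 0$ while $\nu_\infty \geq 0$ everywhere, so $\xi = 0$ is an interior minimum of $\nu_\infty$ with value $0$. Passing to the limit in the equation, $\mu_\infty \geq 0$ solves
\begin{equation*}
- \theta (\mu_\infty)_{\xi\xi} - \alpha (\mu_\infty)_{\theta\theta} - c_\infty (\mu_\infty)_\xi = r(1 - \nu_\infty)\mu_\infty, \qquad (\xi,\theta) \in \R\times\Theta,
\end{equation*}
with Neumann conditions in $\theta$. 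Since $\mu_\infty(0,\cdot) \geq 0$ has zero integral it is identically zero on the slice $\{\xi = 0\}$, so $\mu_\infty$ attains an interior minimum (equal to $0$) at every point $(0,\theta)$. The strong maximum principle (applied to the linear elliptic operator with the bounded zeroth-order coefficient $r(1-\nu_\infty)$, after the usual trick of absorbing the sign by comparing with $e^{K\xi}$-type modifications if needed) then forces $\mu_\infty \equiv 0$ on a neighborhood, and by connectedness $\mu_\infty \equiv 0$ on all of $\R \times \Theta$.

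\textbf{Deriving the contradiction.} So the rescaled/translated profiles converge to the trivial solution. To contradict this I must quantify that $\nu_n$ cannot be globally small: this is where the spectral problem enters, exactly as in the proof of Lemma \ref{bottom}. Concretely, fix $b$ large enough that the principal eigenvalue $\psi_b$ of the slab eigenproblem \eqref{eq:evpb} on $(-b,b)\times\Theta$ (with $\tau=1$, i.e.\ $g_1 = \mathrm{id}$) satisfies $\psi_b > r/2$. If $\|\mu_n\|_{L^\infty((-b,b)\times\Theta)}$ is small enough — which the convergence $\mu_n \to 0$ guarantees for $n$ large — then on $(-b,b)\times\Theta$ one has $r(1-\nu_n)\mu_n \geq (r - r\vert\Theta\vert\|\mu_n\|_\infty)\mu_n > \psi_b \mu_n$ pointwise, i.e.\ $\mu_n$ (translated to be centered appropriately) is a strict positive supersolution of the eigenproblem on the slab. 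The sliding argument used twice already in the paper (Lemmas \ref{upboundc} and \ref{bottom}) — slide $A\varphi_b$ up from below until it touches $\mu_n$, and derive a contradiction at the interior touching point using $\theta(\cdot)_{\xi\xi} + \alpha(\cdot)_{\theta\theta} \geq 0$ there while the boundaries are excluded because $\varphi_b = 0$ on $\xi = \pm b$ and both satisfy Neumann conditions in $\theta$ — then yields a contradiction. Hence no such sequence exists and a uniform $\delta > 0$ must bound $\inf_\xi \nu$ from below.

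\textbf{Main obstacle.} The delicate point is the compactness step: obtaining a uniform $L^\infty$ (or even just uniform local) bound on $\mu_n$ that is valid after translation and does not circularly depend on a lower bound for $\nu$. The a priori bound of Lemma \ref{lem:nc} is phrased for the slab and uses $\nu(0) = \epsilon$ normalization, so I would need to re-examine that argument to confirm it only used $\nu(\xi_0) \leq 1$ at a maximum point of $\mu$ (which holds here by the same interior-maximum argument since $r(1-\nu)\mu \geq 0$ would be violated at a max with $\nu > 1$) and local energy/trace estimates, all of which are translation-invariant and $a$-independent. Once that bound is in hand the rest is the now-standard sliding argument with the two-variable eigenproblem.
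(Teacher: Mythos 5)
There is a genuine gap in the step you identify as ``the now-standard sliding argument with the two-variable eigenproblem.'' The sliding in Lemma \ref{bottom} uses the driftless slab eigenfunction $\varphi_b$ and works only because in that lemma $c=0$. Here $c$ ranges over $[0,c^*]$ and in particular includes $c=c^*$ (the case relevant to the main theorem). Your $\mu_n$ solves an equation containing $c_n\partial_\xi\mu_n$, so it is \emph{not} a supersolution of the driftless eigenproblem \eqref{eq:evpb}: the extra term $-c_n(\mu_n)_\xi$ is not sign-controlled. At the touching point of $\mu_n - A\varphi_b$ the drift derivative of the \emph{difference} vanishes, but since $\varphi_b$ itself satisfies no drift equation you are left at the touching point with $0 \geq \varphi_b(\psi_b - r\nu_n) + c_n(\varphi_b)_\xi$, and $c_n(\varphi_b)_\xi$ can be arbitrarily negative near the right Dirichlet boundary. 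Replacing $\varphi_b$ by the eigenfunction of the slab problem \emph{with} drift $c$ fixes the cancellation at the touching point, but then the principal eigenvalue no longer tends to $r$ as $b\to\infty$; in the $1$D analogue one computes $\psi_b^{c} = r - c^2/(4D) - D\pi^2/(4b^2)$, which is $\leq 0$ for every $b$ once $c \geq 2\sqrt{rD}$. So for $c$ near $c^*$ there is no choice of $b$ for which the eigenvalue is bounded away from zero, and the argument collapses exactly in the critical regime the lemma must cover.

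The paper avoids this by choosing a comparison function specifically tailored to the range $c\in[0,c^*]$: $\psi_m(\xi,\theta)=m(1-\eta\xi^2)Q^*(\theta)$, where $Q^*$ is the trait eigenfunction from Proposition \ref{propspec} and the factor $(1-\eta\xi^2)$ is a shallow parabola whose contribution through the drift, $-2c\eta\xi$, is $\mathcal{O}(\sqrt{\eta})$ on the region $|\xi|\leq\eta^{-1/2}$ where the touching can occur and hence vanishes as $\eta\to 0$. The sign that replaces the positivity of $\psi_b$ is $\lambda^* c^* - \theta(\lambda^*)^2 \geq (\lambda^*)^2\theta_{\min}>0$ for all $\theta\in\Theta$, a consequence of the structural relation \eqref{rel4} ($c^*\geq\lambda^*(\theta_{\max}+\theta_{\min})$). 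Your compactness framework and the observation that the $L^\infty$ bound of Lemma \ref{lem:nc} survives translation are both sound, but to close the argument you must replace the slab-eigenfunction sliding by a comparison function that simultaneously absorbs the drift and yields a positive quantity uniformly for $c\leq c^*$ --- the $(1-\eta\xi^2)Q^*(\theta)$ construction, or something playing the same role, is essential and cannot be borrowed from Lemma \ref{bottom}.
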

\begin{proof}[{\bf Proof of Lemma \ref{inf}}]
We again adapt an argument from \cite{Alfaro} to our context. By the Harnack inequality of Proposition \ref{Harnack}, one has
\begin{equation}\label{Hn}
\forall \left( \xi , \theta , \theta' \right) \in \R \times \Theta^2, \qquad \mu(\xi,\theta) \leq C(\xi) \mu(\xi,\theta'),
\end{equation}
Since \eqref{eqkinwave} is invariant by translation in space, and the renormalization $\nu(0) = \eps$ is not used in the proof of the Harnack inequality, we can take a constant $C(\xi)$ which is independent from $\xi$ \cite{Gilbarg}. This yields

\begin{equation*}
\forall \left( \xi , \theta \right) \in \R \times \Theta, \qquad - \theta \mu_{\xi\xi}(\xi,\theta) - \alpha \mu_{\theta\theta}(\xi,\theta) - c \mu_\xi (\xi,\theta) \geq r(1 - C \Theta \mu(\xi,\theta))\mu(\xi,\theta).
\end{equation*}
{Hence, $\mu$ is a super solution of some elliptic equation with local terms only. For $\eta > 0$ arbitrarily given, we define the family of functions} 
\begin{equation*}
\psi_m(\xi,\theta) = m \left( 1 - \eta \xi^2 \right) Q^*(\theta). 
\end{equation*}
From the uniform $L^\infty$ estimate on $\mu$, there exists $M$ large enough such that $\psi_M(0,\theta) > \mu(0,\theta)$. Moreover, by assumption we have $\psi_m \leq \mu$ for $m= \frac{ \inf_\R \nu}{ C \vert \Theta \vert \Vert Q^* \Vert_\infty} > 0$. As a consequence, we can define 
{\begin{equation*}
m_0:= \sup \lbrace m > 0 , \quad \forall (\xi, \theta) \in \R \times \Theta, \quad \psi_m (\xi,\theta) \leq \mu(\xi,\theta) \rbrace.
\end{equation*}}
As in previous same ideas, see Lemmas \ref{upboundc} and \ref{bottom}, there exists $(x_0 , \theta_0)$ such that $\mu - \psi_{m_0}$ has a zero minimum at this point. We have clearly that $\xi_0 \in \left[ - \frac{1}{\sqrt{\eta}} ; \frac{1}{\sqrt{\eta}} \right]$ since $\psi_m$ is negative elsewhere. We have, at $(\xi_0 , \theta_0)$:
\begin{equation*}
\begin{array}{lcl}
0 & \geq & - \theta_0 \left( \mu - \psi_{m_0} \right)_{\xi\xi} - \alpha \left( \mu - \psi_{m_0} \right)_{\theta \theta} - c \left( \mu - \psi_{m_0} \right)_\xi, \medskip \\
& \geq & r \left(1 - C \vert \Theta \vert \mu \right) \mu + \theta_0 \left( \psi_{m_0} \right)_{\xi\xi} + \alpha \left( \psi_{m_0} \right)_{\theta \theta} + c \left(\psi_{m_0}\right)_\xi, \medskip \\
& \geq & r \left(1 - C \vert \Theta \vert \mu \right) \mu - 2 \eta m_0 \theta_0 Q^*(\theta_0) -  \left( -\lambda c( \lambda ) + \theta_0 \lambda^2 + r \right) \psi_{m_0}(\xi_0, \theta_0) - 2 c\eta \xi_0 m_0 Q^*(\theta_0),  \medskip\\
& \geq & \mu(\xi_0,\theta_0) \left( \lambda^* c^* - \theta_0 (\lambda^*)^2 - r C \vert \Theta \vert \mu(\xi_0,\theta_0) \right)  - 2 m_0 Q^*(\theta_0) \left( \eta \theta_0 + \eta \xi_0 c \right).
\end{array}
\end{equation*}
It follows from $\mu(\xi_0,\theta_0) \geq \frac{\nu(\xi_0)}{C \vert \Theta \vert}$ \eqref{Hn}, from the inequalities $\vert \xi_0 \vert \leq \frac{1}{\sqrt{\eta}}$, $c \leq c^*$, $m_0 \leq M$ and  the fact that for all $\theta_0 \in \Theta$, the quantity $c^* - \theta_0 \lambda^* - \theta_{\text{min}} \lambda^*$ is positive (see \eqref{rel4}) that
\begin{equation*}
\begin{array}{lcl}
\mu(\xi_0 , \theta_0) &\geq& \displaystyle  \frac{\lambda^* \left( c^* - \theta_0 \lambda^* \right)}{r C \vert \Theta \vert } - \frac{2 C \Theta M \Vert Q^* \Vert_{\infty}\left(\eta \theta_{\text{max}} +  \sqrt{\eta} c^* \right)}{r C \vert \Theta \vert \nu(\xi_0)}, \medskip \\
&\geq& \displaystyle \frac{\theta_{\text{min}} \left( \lambda^* \right)^2 }{r C \vert \Theta \vert} - \frac{2 C \Theta M \Vert Q^* \Vert_{\infty}\left( \sqrt{\eta} c^* + \eta \theta_{\text{max}} \right)}{r C \vert \Theta \vert \,{ (\inf_{\xi \in \R} \nu)} }.\\
\end{array}
\end{equation*}
{Recalling $\inf_{\xi \in \R} \nu > 0$ and taking arbitrarily small values} of $\eta > 0$, we have necessarily $\mu(\xi_0, \theta_0) \geq \frac{\theta_{min} \left( \lambda^* \right)^2 }{{2} C r \vert \Theta \vert}$. Since $\mu$ and $\psi_{m_0}$ coincide at $(\xi_0,\theta_0)$, we have $m_0 \geq \frac{\theta_{min} \left( \lambda^* \right)^2 }{{2} r C \vert \Theta \vert \Vert Q^* \Vert_{\infty} }$. The definition of $m_0$ now gives 
\begin{equation*}
\forall (\xi,\theta) \in \R \times \Theta, \qquad \mu(\xi,\theta) \geq \frac{\theta_{min} \left( \lambda^* \right)^2 }{{2} C \vert \Theta \vert r \Vert Q^* \Vert_{\infty} } \left( 1 - \eta \xi^2 \right) Q^*(\theta).
\end{equation*}
Since $\eta$ is arbitrarily small, we have necessarily $\nu(\xi) \geq \delta:= \frac{\theta_{min} \left( \lambda^* \right)^2 }{{2} C \vert \Theta \vert r \Vert Q^* \Vert_{\infty} }$ for all $\xi \in \R$.

\end{proof}

We deduce from this Lemma that up to choosing $\eps < \delta$, the solution necessarily satisfies $\inf_{\R} \nu(\xi) = 0$. {Since this infimum cannot be attained, we have necessarily $\liminf_{\xi \to + \infty} \nu(\xi) = 0$ (up to $\xi \to - \xi$ and $c \to -c$)}. We now prove that this enforces $c = c^*$ for our wave. For this purpose, we show that a solution going slower than $c^*$ cannot satisfy the $\liminf$ condition by a sliding argument.

\begin{proposition}\label{prop:minspeed}
Any solution $(c,\mu)$ of the system 
\begin{equation}\label{eq:minspeed}
\left\{\begin{array}{l}
- \theta \mu_{\xi\xi} - \alpha \mu_{\theta\theta}  - c \mu_\xi = r \mu (1 - \nu), \qquad (\xi , \theta ) \in \R \times \Theta, \qquad  \medskip\\
\mu_\theta(\xi,\theta_{\min}) = \mu_\theta(\xi,\theta_{\max}) = 0, \qquad \xi \in \R, \qquad \\
\end{array}
\right.
\end{equation}
with $c\geq0$ and $\liminf_{\xi \in\R} \nu(\xi) = 0$ satisfies necessarily $c \geq c^*$. 
\end{proposition}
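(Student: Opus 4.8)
The strategy is a sliding argument comparing $\mu$ against a supersolution built from the eigenfunction $Q_\lambda$ of the spectral problem \eqref{eq:eigenpb}, exploiting the fact that for $c < c^*$ there is no admissible decay rate $\lambda$, i.e. the quadratic relation $-\lambda c + \lambda^2\langle\theta_\lambda\rangle + r = 0$ (equivalently $\lambda c(\lambda) = r + \lambda^2\langle\theta_\lambda\rangle$) forces $c(\lambda) \geq c^* > c$ for every $\lambda > 0$. First I would argue by contradiction: suppose $c < c^*$. The key point is that then, for \emph{every} $\lambda > 0$, one has $-\lambda c + \theta\lambda^2 + r \geq -\lambda c(\lambda) + \theta \lambda^2 + r + \lambda(c(\lambda)-c)$, and more usefully, comparing with the spectral problem one checks that the function $\varphi_\lambda(\xi,\theta) := Q_\lambda(\theta)e^{-\lambda\xi}$ satisfies
\begin{equation*}
-\theta (\varphi_\lambda)_{\xi\xi} - \alpha (\varphi_\lambda)_{\theta\theta} - c (\varphi_\lambda)_\xi = \left( -\lambda c + \theta\lambda^2 + r \right)\varphi_\lambda - r\varphi_\lambda + \big(\text{correction from } c \neq c(\lambda)\big),
\end{equation*}
and by choosing $\lambda$ appropriately (for instance $\lambda$ such that $c(\lambda) > c$, which holds for all $\lambda$) one gets that $\varphi_\lambda$ is a strict supersolution of the \emph{linearized} operator $-\theta\partial_{\xi\xi} - \alpha\partial_{\theta\theta} - c\partial_\xi - r$. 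Since $\nu \geq 0$, $\mu$ is a subsolution of this same linearized operator on all of $\R\times\Theta$.

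Next I would run the sliding/comparison. Using the uniform $L^\infty$ bound on $\mu$ (from Lemma \ref{convslab}, $\|\mu\|_\infty \leq \vert\Theta\vert K_0$) and the assumption $\liminf_{\xi\to+\infty}\nu(\xi) = 0$, which by the Harnack inequality of Proposition \ref{Harnack} forces $\mu(\xi,\theta) \to 0$ along some sequence $\xi_k \to +\infty$ uniformly in $\theta$, I would compare $\mu$ with $\tau\varphi_\lambda(\cdot - \sigma, \cdot)$ for large shifts $\sigma$. The exponential factor $e^{-\lambda(\xi-\sigma)}$ dominates $\mu$ for $\xi$ in any half-line $[\sigma - R, +\infty)$ when $\sigma$ is large (because $\mu$ is bounded while $\varphi_\lambda \to +\infty$ as $\xi\to-\infty$ at fixed $\xi - \sigma$ small, and is small for $\xi$ large), so one can define
\begin{equation*}
\tau_0 := \inf\{ \tau > 0 : \mu(\xi,\theta) \leq \tau\,\varphi_\lambda(\xi - \sigma,\theta) \text{ for all } \xi \geq \sigma - R,\ \theta\in\Theta \},
\end{equation*}
and by the strict supersolution property together with the strong maximum principle and the Neumann condition on $\partial\Theta$ (handled exactly as in Lemmas \ref{upboundc} and \ref{bottom}: interior contact is excluded by the elliptic inequality since $\theta(\cdot)_{\xi\xi} + \alpha(\cdot)_{\theta\theta} \geq 0$ at an interior min of $\mu - \tau_0\varphi_\lambda$, and $\theta$-boundary contact is excluded by Hopf/Neumann), the minimum of $\mu - \tau_0\varphi_\lambda$ can only be attained at the artificial left boundary $\xi = \sigma - R$. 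There, $\mu$ is bounded above by $\vert\Theta\vert K_0$ while $\varphi_\lambda(\xi - \sigma,\theta) = Q_\lambda(\theta)e^{\lambda R}$ is large for $R$ large, so one gets $\tau_0 \leq C e^{-\lambda R}$. Letting $R \to +\infty$ (equivalently exploiting that the contact point escapes to $-\infty$ along the wave) yields $\tau_0 = 0$, hence $\mu \equiv 0$ on the relevant half-line, contradicting $\nu(0) = \eps > 0$ after propagating back by Harnack. Therefore $c \geq c^*$.

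\textbf{Main obstacle.} The delicate point is controlling the comparison near $\xi = +\infty$: one only knows $\liminf \nu = 0$, not $\lim \nu = 0$, so $\mu$ need not tend to zero monotonically and the "touching from above by $\tau\varphi_\lambda$" has to be set up on truncated domains $[\sigma - R, +\infty)$ with care, or alternatively one fixes the comparison on a moving window and lets the window slide to $+\infty$ along a sequence where $\nu \to 0$. Making the escape-of-the-contact-point argument rigorous — ruling out that $\tau_0$ stays bounded away from $0$ because the contact point stays at the finite left edge — is where the bulk of the work lies; it relies essentially on the uniform bound $\|\mu\|_\infty \leq \vert\Theta\vert K_0$ and on the exponential growth of $\varphi_\lambda$ as $\xi - \sigma \to -\infty$, and on choosing $\lambda$ (e.g. $\lambda = \lambda^*$, or any $\lambda$ with $c(\lambda) > c$) so that $\varphi_\lambda$ is a genuine strict supersolution of the linearized operator with speed $c < c^*$. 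A secondary technical point is that the linearized comparison only uses $\nu \geq 0$, so no information on the sign of $1 - \nu$ is needed, which is what makes the argument robust despite the absence of a full comparison principle for the nonlocal problem.
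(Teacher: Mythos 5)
There is a genuine gap, and it sits at the heart of your plan: when $c < c^*$, the function $\varphi_\lambda(\xi,\theta) = Q_\lambda(\theta)e^{-\lambda\xi}$ is \emph{not} a supersolution of the linearized operator — it is a \emph{subsolution}, for every $\lambda>0$. A direct computation using \eqref{eq:eigenpb} gives
\begin{equation*}
-\theta(\varphi_\lambda)_{\xi\xi} - \alpha(\varphi_\lambda)_{\theta\theta} - c(\varphi_\lambda)_\xi - r\varphi_\lambda = -\lambda\bigl(c(\lambda)-c\bigr)\varphi_\lambda,
\end{equation*}
and the right-hand side is strictly negative precisely because $c < c^* \leq c(\lambda)$. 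Since $\mu$ also satisfies $-\theta\mu_{\xi\xi}-\alpha\mu_{\theta\theta}-c\mu_\xi-r\mu = -r\mu\nu \leq 0$, both objects you are trying to compare are subsolutions of the same operator, and the sliding argument collapses: at an interior zero minimum of $\tau\varphi_\lambda - \mu$, the elliptic inequality only yields $r\nu(\xi_0) \leq \lambda(c(\lambda)-c)$, which is an upper bound on $\nu(\xi_0)$ that cannot be excluded (indeed $\liminf\nu=0$ makes such touching points plausible). This is not a technicality that can be patched by letting $R\to\infty$ in a truncated domain; it is exactly the obstruction that $c^*$ encodes. The whole point of the minimal speed is that for $c<c^*$ there is \emph{no} real exponential supersolution of the linearized problem — the decay rates $\lambda$ solving the dispersion relation become complex, and linearized solutions oscillate. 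If the supersolution you seek existed, $c$ would not be below the minimal speed.

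The paper's proof turns this obstruction into the engine of the argument rather than fighting it. It considers the spectral problem \eqref{eq:eigenpbs} with $r$ replaced by $r-s$, chooses $s=s(\bar c)>0$ so that some intermediate speed $\bar c\in(c,c^*)$ becomes the minimal speed of the perturbed problem, and then takes the (necessarily complex) decay rate $\lambda_c=\lambda_R+i\lambda_I$ associated with the actual speed $c<\bar c$. The real function $\psi(\xi,\theta)=\mathrm{Re}\bigl(e^{-\lambda_c\xi}Q_{\lambda_c}(\theta)\bigr)$ oscillates in $\xi$, is positive on a bounded subdomain $\mathcal{D}$, vanishes on $\partial\mathcal{D}$, and satisfies $-\theta\psi_{\xi\xi}-\alpha\psi_{\theta\theta}-c\psi_\xi-r\psi = -s(\bar c)\psi$. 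One then slides $m\psi$ \emph{from below} (which is possible precisely because $\psi$ is supported on a compact set, so no issue at $\xi\to-\infty$) until it touches $\mu$; the touching point forces $\nu(\xi_0)\geq s(\bar c)/r$, Harnack propagates this to $\nu(0)\geq s(\bar c)/(rC)$, and translating in $\xi$ (the equation on $\R\times\Theta$ is translation invariant) gives $\inf_{\R}\nu\geq s(\bar c)/(rC)>0$, contradicting $\liminf\nu=0$. If you want to recover this yourself, the two ideas to add to your outline are: (i) reduce $r$ to $r-s$ so that the perturbed dispersion relation is exactly critical at $\bar c$, producing a bona fide complex root at speed $c<\bar c$; and (ii) replace the real exponential by the real part of the complex exponential, which is a sign-changing \emph{subsolution} with compact positivity set, and slide it up rather than down.
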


As a consequence, the solution given after Lemma \ref{convslab} goes with the speed $c^*$. This latter speed appears to be the minimal speed of existence of nonnegative travelling waves, similarly as for the Fisher KPP equation.

\begin{proof}[{\bf Proof of Proposition \ref{prop:minspeed}}]
We again play with subsolutions. {By analogy with the Fisher-KPP equation, we shall use oscillating fronts associated with speed $c < c^*$ to "push" solutions of \eqref{eq:minspeed} up to the speed $c^*$}. We now proceed like in \cite{Bouin-Calvez-Nadin}.

Let us now consider the spectral problem for complex values of $\lambda$:
\begin{equation}\label{eq:eigenpbs}
\begin{cases} 
\alpha \partial^2_{\theta\theta} Q_\lambda(\theta) + \left( - \lambda c + \theta \lambda^2 + r - s \right) Q_\lambda(\theta) = 0\, ,\medskip\\
\partial_\theta Q_\lambda(\theta_{min}) = \partial_\theta Q_\lambda(\theta_{max}) =0\,.
\end{cases}
\end{equation}
When $s = 0$ we know from Proposition \ref{propspec} that for $c = c^*$ there exists some \textit{real} $\lambda^* > 0$ such that the spectral problem is solvable with a positive eigenvector. Moreover, the minimal speed is increasing with respect to $r$. Indeed, for all $r < s$ and $\lambda > 0$, one has
\begin{equation*}
\lambda c_r(\lambda) = r +  \lambda^2 \theta_{\text{max}} - \gamma( \lambda ) < s +  \lambda^2 \theta_{\text{max}} - \gamma( \lambda ) = \lambda c_s(\lambda)
\end{equation*}
and thus $c_r^* < c_s^*$.

Now suppose by contradiction that $c < c^*$. Take $c < \bar c < c^*$, $s>0$. One can choose $s( \bar c ) > 0 $ such that $\bar c$ is the minimal speed of the spectral problem \eqref{eq:eigenpbs}. There exists $\lambda_c:= \lambda_R + i \lambda_I \in \C$ with $\text{Re}(\lambda_c) > 0$ such that there exists $Q_{\lambda_c}: \Theta \mapsto \C$ which solves the spectral problem. A continuity argument ensures that $\text{Re} \left(Q_{\lambda_c}\right) > 0$ since $\text{Re} \left(Q_{\lambda_{\bar c}}\right) > 0$ when $\bar c$ is sufficiently close to $c$. 

Let us now define the real function
\begin{equation*}
\psi(\xi,\theta):= \text{Re} \left( e^{- \lambda_c \xi} Q_{\lambda_c} \left( \theta \right)\right) = e^{-\lambda_{R} \xi}\left[ \text{Re} \left(Q_{\lambda_c}(\theta)\right) \cos (\lambda_{I} \xi) + \text{Im} \left(Q_{\lambda_c}(\theta)\right) \sin (\lambda_{I} \xi) \right].
\end{equation*} 
For all $\theta \in \Theta$, one has $\psi\left( 0 , \theta \right) > 0$ and $\psi\left( \pm \frac{\pi}{\lambda_I} , \theta \right) < 0$. As a consequence, there exists an open subdomain $\mathcal{D} \subset \Omega:= \left[ - \frac{\pi}{\lambda_I} , \frac{\pi}{\lambda_I} \right] \times \Theta$ such that $\psi > 0$ on $\mathcal{D}$ and $\psi$ vanishes on the boundary $\partial \mathcal{D}$. From the Harnack estimate of Proposition \ref{Harnack}, there exists a constant $C$ which depends on $\vert \mathcal{D} \vert$ such that one has for all $\xi \in \R$,
\begin{equation*}
\forall (z,\theta,\theta') \in \mathcal{D} \times \Theta, \qquad u(z + \xi, \theta) \leq C u(\xi,\theta')
\end{equation*}
By construction, one has
\begin{equation*}
- \theta \psi_{\xi\xi} - \alpha \psi_{\theta \theta} - c \psi_\xi - r \psi = - s(\bar c) \psi.
\end{equation*}
Thus, for all $m \geq 0$, the function $v:= \mu - m\psi$ satisfies
\begin{equation*}
- \theta v_{\xi\xi} - \alpha v_{\theta \theta} - c v_\xi - r v = m s( \bar c)\psi - r \nu(\xi) \mu
\end{equation*}
There now exists $m_0$ such that $v$ attains a zero minimum at $(\xi_0,\theta_0) \in \mathcal{D}$. One deduces $\nu(\xi_0) \geq \frac{s(\bar c)}{r}$. We conclude by the Harnack estimate that $\nu(0) \geq \frac{s(\bar c)}{rC}$. 

{We now want to translate the argument in space. For this purpose, we define, for $\zeta \in \R$, the function $h(\xi,\theta):= \mu(\xi + \zeta,\theta)$. It also satisfies \eqref{eq:minspeed}. As a consequence, for all $\zeta \in \R$, $\nu\left( \zeta \right) = \int_{\Theta} h(0,\theta) d \theta \geq \frac{s(\bar c)}{rC}$. We emphasize that the renormalization $\nu(0) = \eps$, which is the only reason for which \eqref{eq:slab} is not invariant by translation, is not used here. We then obtain $\inf_{\xi\in\R} \nu(\xi) \geq \frac{s(\bar c)}{rC}$. This contradicts the property $\liminf_{\xi\in\R} \nu(\xi) = 0$.}

\end{proof}

\subsection{The profile has the required limits at infinity.}

\begin{proposition}\label{limits}
Any solution $(c,\mu)$ of the system 
\begin{equation*}
\left\{\begin{array}{l}
- \theta \mu_{\xi\xi} - \alpha \mu_{\theta\theta}  - c \mu_\xi = r \mu (1 - \nu), \qquad (\xi , \theta ) \in \R \times \Theta, \qquad  \medskip\\
\partial_\theta \mu(\xi,\theta_{\min}) = \partial_\theta  \mu(\xi,\theta_{\max}) = 0, \qquad \xi \in \R, \qquad \\
\end{array}
\right.
\end{equation*}
with $c\geq0$ and $\nu(0) = \eps$ satisfies
\begin{enumerate}
\item There exists $m>0$ such that $\forall \xi \in ]-\infty , 0], \quad \mu\left( \xi ,\cdot \right) > m Q(\cdot)$,
\item $\lim_{\xi \to +\infty} \mu(\xi,\cdot) =0.$
\end{enumerate}
\end{proposition}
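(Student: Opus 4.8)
The plan is to prove both assertions by \emph{sweeping} (sliding) arguments, using as barriers the profile $Q^{*}$ and the exponential $e^{-\lambda^{*}\xi}$, the spectral identities of Proposition~\ref{propspec}, and the Harnack inequality of Proposition~\ref{Harnack} to pass between pointwise bounds on $\mu$ and bounds on the nonlocal term $\nu$. Because \eqref{eq:minspeed} admits no comparison principle, each sweeping will terminate either at a finite contact point, where I will invoke the strong maximum principle, or ``at $+\infty$'', in which case I will pass to the limit along space translates $\mu(\cdot+\zeta_{n},\cdot)$ and apply the strong maximum principle to the entire solution so obtained. Throughout I will use the facts, already available, that (after possibly reflecting $\xi\mapsto-\xi$) $\liminf_{\xi\to+\infty}\nu=0$ and that the solution of interest travels at speed $c=c^{*}$.

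For assertion~1, I would first check that the $\xi$-independent function $\psi_{m}(\xi,\theta):=mQ^{*}(\theta)$ is, for $m$ small, a subsolution of the local equation $-\theta u_{\xi\xi}-\alpha u_{\theta\theta}-c^{*}u_{\xi}=ru(1-C|\Theta|u)$ of which $\mu$ is a supersolution (the reduction being the Harnack trick used in Lemma~\ref{inf}): indeed $-\alpha m(Q^{*})''=m\bigl(-\lambda^{*}c^{*}+(\lambda^{*})^{2}\theta+r\bigr)Q^{*}$ and, by \eqref{rel4}, $\lambda^{*}c^{*}-(\lambda^{*})^{2}\theta\ge(\lambda^{*})^{2}\theta_{\min}>0$ on $\Theta$, so the subsolution inequality holds once $rC|\Theta|\,m\,\|Q^{*}\|_{\infty}\le(\lambda^{*})^{2}\theta_{\min}$. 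Next, $\nu(0)=\eps$ and Harnack give $\mu(0,\cdot)\ge m_{1}Q^{*}(\cdot)$ with $m_{1}$ of order $\eps$, hence small and admissible above. I would then slide $\psi_{m}$ underneath $\mu$ on $(-\infty,0]\times\Theta$: letting $m_{0}:=\inf_{(-\infty,0]\times\Theta}\mu/Q^{*}$, if $m_{0}<m_{1}$ then either this infimum is attained at a finite point of $(-\infty,0)\times\Theta$ or of the Neumann boundary, where the strong maximum principle forces $\mu\equiv m_{0}Q^{*}$ on a whole vertical strip, contradicting $\mu(0,\cdot)\ge m_{1}Q^{*}$; or it is only approached as $\xi\to-\infty$, in which case a limit of translates is an entire solution $\equiv m_{0}Q^{*}$, whose constant density $\nu_{\infty}\equiv m_{0}$ is incompatible with the genuinely $\theta$-dependent coefficient $(\lambda^{*})^{2}\theta$ in the equation. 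Hence $\mu(\xi,\cdot)\ge m_{1}Q^{*}(\cdot)$ on $(-\infty,0]$, which is assertion~1 (with $Q=Q^{*}$ and $m=m_{1}/2$).

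For assertion~2, the natural barrier is $\phi_{M}(\xi,\theta):=MQ^{*}(\theta)e^{-\lambda^{*}\xi}$, which by the very definition of $(c^{*},\lambda^{*},Q^{*})$ satisfies $-\theta(\phi_{M})_{\xi\xi}-\alpha(\phi_{M})_{\theta\theta}-c^{*}(\phi_{M})_{\xi}=r\phi_{M}\ge r\phi_{M}(1-\nu)$ for every $M>0$, so it is a supersolution of \eqref{eq:minspeed}; since $\nu(0)=\eps$, Harnack also makes $\mu(0,\cdot)\le MQ^{*}(\cdot)$ for a suitable $M$. One then wants $\mu\le\phi_{M}$ on $[0,\infty)\times\Theta$, which gives $\mu(\xi,\cdot)\le M\|Q^{*}\|_{\infty}e^{-\lambda^{*}\xi}\to0$. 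If $w:=\phi_{M}-\mu$ became negative at a finite point, the strong maximum principle (together with the equation, which excludes a nonconstant limiting profile) would give a contradiction as in assertion~1; the genuinely delicate case is when $\inf_{[0,\infty)\times\Theta}w<0$ is only approached as $\xi\to+\infty$, that is, when $\mu$ does not decay, equivalently $\limsup_{\xi\to+\infty}\nu>0$. \textbf{This is the step I expect to be the main obstacle.} To rule it out I would translate: choosing $\zeta_{n}\to+\infty$ with $\nu(\zeta_{n})$ bounded below, a locally convergent subsequence of $\mu(\cdot+\zeta_{n},\cdot)$ yields a bounded, everywhere positive entire solution $\mu_{\infty}$ at speed $c^{*}$; then, exploiting an interlaced sequence of points at which $\nu$ is arbitrarily small (this is where $\liminf_{\xi\to+\infty}\nu=0$ is crucial, and where one must argue carefully, since without a comparison principle $\nu$ could a priori oscillate), one forces $\nu_{\infty}$ to vanish somewhere, whence $\mu_{\infty}\equiv0$ by the strong maximum principle---contradicting $\nu_{\infty}\not\equiv0$. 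Once $\limsup_{\xi\to+\infty}\nu=0$ is known, the sliding of $\phi_{M}$ goes through and assertion~2 follows.
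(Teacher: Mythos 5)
Your plan for assertion~1 is close to the paper's in spirit (slide a barrier proportional to $Q^{*}$ and argue at the contact point, using Harnack to localize the nonlocal term), but the choice of barrier creates a real difficulty. You take the $\xi$-independent subsolution $\psi_{m}(\xi,\theta)=mQ^{*}(\theta)$, which is strictly positive on all of $(-\infty,0]\times\Theta$, so the contact can escape to $\xi\to-\infty$. You acknowledge this and propose to pass to a limit of translates, but the claimed conclusion ``the limit is an entire solution $\equiv m_{0}Q^{*}$ by the strong maximum principle'' is not justified: $\mu_{\infty}$ solves the nonlocal travelling-wave equation, $m_{0}Q^{*}$ does not, and the strong maximum principle for $v=\mu_{\infty}-m_{0}Q^{*}$ requires $v$ to be a supersolution of a linear homogeneous equation, which is not automatic here. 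The same objection applies to the finite-contact-point case, where you invoke the strong maximum principle to get $\mu\equiv m_{0}Q^{*}$ on a strip. What actually closes the argument is the pointwise evaluation of the differential inequality at the contact point, which (using $\mu(\xi_{0},\theta_{0})=m_{0}Q^{*}(\theta_{0})$ and the Harnack bound $\nu\le C|\Theta|\mu$) yields a positive lower bound $m_{0}\ge\theta_{\min}(\lambda^{*})^{2}/(rC|\Theta|\,\|Q^{*}\|_{\infty})$; you do not spell this out. The paper sidesteps the escape problem entirely by using the tilted barrier $\psi_{\eta}(\xi,\theta)=m(1+\eta\xi)Q^{*}(\theta)$, which is negative for $\xi<-1/\eta$, forcing the contact point into the compact set $[-1/\eta,0]\times\Theta$; the sliding is over $\eta\downarrow 0$ with $m$ fixed from the start. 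This is both simpler and rigorous, and you should adopt it.

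For assertion~2 your route diverges from the paper's and does not close. You try to dominate $\mu$ by the explicit supersolution $\phi_{M}=MQ^{*}(\theta)e^{-\lambda^{*}\xi}$, which would give exponential decay. You correctly flag that the delicate case is when $\limsup_{\xi\to+\infty}\nu>0$, and you propose to rule it out by translating and interlacing points where $\nu$ is small with points where it is not, so that the limit $\mu_{\infty}$ has $\nu_{\infty}$ vanishing somewhere while $\nu_{\infty}\not\equiv0$. This is the gap: the sequences $\zeta_{n}$ (where $\nu\ge\delta$) and $\xi_{n}'$ (where $\nu\to0$) may separate at a rate going to infinity, so after centering at $\zeta_{n}$ the small points escape every compact set and leave no trace in $\mu_{\infty}$. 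Nothing in your sketch prevents that. The paper's argument avoids the problem entirely and proves less but enough: assuming $\nu(\xi_{n})\ge\delta$ along $\xi_{n}\to+\infty$, one reruns the assertion~1 sliding with $0$ replaced by $\xi_{n}$ (the constants from Harnack and the barrier do not depend on $\xi_{n}$ by translation invariance), obtaining a uniform lower bound on $\nu$ over $(-\infty,\xi_{n}]$, hence over all of $\R$; then $\inf_{\R}\nu>0$, so Lemma~\ref{inf} forces $\nu\ge\delta>\eps$ everywhere, contradicting $\nu(0)=\eps$. I recommend replacing your assertion~2 argument with that one; the exponential barrier and the translation/interlacing step are not needed.
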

\begin{proof}[{\bf Proof of Proposition \ref{limits}}]
We again adapt to our case an argument from \cite{Alfaro}. By the Harnack inequality of Proposition \ref{Harnack}, there exists $\widetilde C$ such that one has
{\begin{equation}\label{HHn}
\inf_{\xi \in [-1,0] \times \Theta} \mu(\xi,\theta) \geq \frac{\eps}{\widetilde{C} \vert \Theta \vert },
\end{equation}}
recalling $\nu(0) = \eps$. Also recalling 
\begin{equation*}
\forall \left( \xi , \theta , \theta' \right) \in \R \times \Theta^2, \qquad \mu(\xi,\theta) \leq C \mu(\xi,\theta'),
\end{equation*}
we obtain
\begin{equation*}
\forall \left( \xi , \theta \right) \in \R \times \Theta, \qquad - \theta \mu_{\xi\xi}(\xi,\theta) - \alpha \mu_{\theta\theta}(\xi,\theta) - c \mu_\xi (\xi,\theta) \geq r(1 - C \vert \Theta \vert \mu(\xi,\theta))\mu(\xi,\theta).
\end{equation*}
Let us define, for $m = \frac12 \min \left( \frac{\eps}{ \vert \Theta \vert \widetilde{C} \Vert Q^* \Vert_{\infty} } , \frac{\theta_{min} (\lambda^*)^2}{rC \Vert Q^* \Vert_{\infty} \vert \Theta \vert} \right)$ and $\eta > 0$ arbitrarily given, the function 
\begin{equation*}
\psi_\eta(\xi,\theta) = m \left( 1 + \eta \xi \right) Q^*(\theta). 
\end{equation*}
on $\left] - \infty , 0\right] \times \Theta$.
We have, 
\begin{equation*}
\forall (\xi,\theta) \in \left]-\infty,-1\right] \times \Theta, \qquad \psi_1(\xi,\theta) = m \left( 1 + \xi \right) Q^*(\theta) \leq 0 \leq \mu(\xi,\theta). 
\end{equation*}
Moreover, for $(\xi,\theta) \in ]-1 , 0] \times \Theta$, using \eqref{HHn}, we have 
\begin{equation*}
\psi_1(\xi,\theta) = m \left( 1 + \xi \right) Q^*(\theta) \leq m \Vert Q^* \Vert_{\infty} \leq \frac12 \frac{\eps \Vert Q^* \Vert_{\infty}}{ \vert \Theta \vert \widetilde{C} \Vert Q^* \Vert_{\infty} }  \leq  \inf_{\xi \in [-1,0] \times \Theta} \mu(\xi,\theta) \leq \mu(\xi,\theta).
\end{equation*}
As a consequence we can define 
\begin{equation*}
\eta_0:= \min \lbrace \eta > 0 , \forall (\xi,\theta) \in \left]- \infty , 0\right] \times \Theta,  \psi_\eta (\xi,\theta) \leq \mu(\xi,\theta) \rbrace \in [0,1].
\end{equation*}

We will now prove that $\eta_0 = 0$ by contradiction. Suppose that $\eta_0 > 0$. {We apply the same technique as in the proofs of Lemmas \ref{upboundc} and \ref{bottom}: there exists $(\xi_0 , \theta_0)$ such that $\mu - \psi_{\eta_0}$ has a zero minimum at this point. Moreover, we have }$\xi_0 \in \left[ - \frac{1}{\eta_0} ; 0 \right]$ since $\psi_\eta$ is negative elsewhere. Moreover, $\xi_0$ cannot be $0$ since this would give $\mu(0,\theta_0) = m Q^*(\theta_0) \leq \frac12 \frac{\eps}{ \vert \Theta \vert \widetilde{C} } $ and this would contradict \eqref{HHn}. We have, at $(\xi_0 , \theta_0)$:
\begin{align*}
0 & \geq  - \theta \left( \mu - \psi_{\eta_0} \right)_{\xi\xi} - \alpha \left( \mu - \psi_{\eta_0} \right)_{\theta \theta} - c \left( \mu - \psi_{m_0} \right)_\xi \medskip \\
& \geq  r \left(1 - C \Theta \mu \right) \mu + \theta \left( \psi_{\eta_0} \right)_{\xi\xi} + \alpha \left( \psi_{\eta_0} \right)_{\theta \theta} + c \left(\psi_{m_0}\right)_\xi \medskip \\
& \geq  r \left(1 - C \Theta \mu \right) \mu - \psi_{\eta_0}(\xi_0, \theta_0) \left( -\lambda^* c^* + \theta_0 \left(\lambda^*\right)^2 + r \right) + c m_0 \eta  Q^*(\theta_0)  \medskip\\
& \geq  \mu(\xi_0,\theta_0) \left( \lambda^* c^* - \theta_0 (\lambda^*)^2 - r C \vert \Theta \vert \mu(\xi_0,\theta_0) \right)   + c m_0 \eta  Q^*(\theta_0) \\
& \geq  \mu(\xi_0,\theta_0) \left( \lambda^* c^* - \theta_0 (\lambda^*)^2 - r C \vert \Theta \vert \mu(\xi_0,\theta_0) \right)  
\end{align*}
It yields
\begin{equation*}
\frac{\theta_{min} (\lambda^*)^2}{rC \vert \Theta \vert} \leq \mu(\xi_0,\theta_0) = \psi_{\eta_0}(\xi_0,\theta_0) \leq m \Vert Q^* \Vert_{\infty}.
\end{equation*}
and this contradicts the very definition of $m$. As a consequence, $\eta_0 = 0$ and 
\begin{equation*}
\forall (\xi,\theta) \in \R^- \times \Theta, \qquad \mu(\xi,\theta) \geq m Q^*(\theta)
\end{equation*}
In particular, $\inf_{\R^-} \nu \geq m$ holds.

We now prove that $\lim_{\xi \to +\infty} \mu(\xi,\cdot) =0$. It is sufficient to prove that $\lim_{\xi \to \infty}\nu(\xi) = 0$. Suppose that there exists $\delta$ a subsequence $\xi_n \to + \infty$ such that $\forall n \in \N, \;\nu(\xi_n)\geq \delta$. Adapting the preceding proof we obtain that for all $n \in \N$,
\begin{equation}\label{last}
\forall (\xi,\theta) \in \left] -\infty , \xi_n \right] \times \Theta, \qquad \nu(\xi) \geq \frac12 \min \left( \frac{\delta}{ \vert \Theta \vert \widetilde{C} \Vert Q^* \Vert_{\infty} } , \frac{\theta_{min} (\lambda^*)^2}{rC \Vert Q^* \Vert_{\infty} \vert \Theta \vert} \right).
\end{equation}
Hence \eqref{last} is true for all $\xi \in \R$ and Lemma \ref{inf} gives the contradiction since the normalization $\eps$ is well chosen.  

\end{proof}

\section*{Acknowledgments}

The authors are extremely grateful to Sepideh Mirrahimi for very fruitful comments and earlier computations on this problem. The authors also thank Olivier Druet for the proof of Proposition \ref{Harnack} and Léo Girardin for valuable suggestions.

\section*{Appendix A: A Harnack inequality up to the boundary.}

We emphasize here a useful Harnack inequality for \eqref{eqkinwave} which is true up to the boundary in the direction $\theta$. This is possible thanks to the  Neumann boundary conditions in this direction. 

\begin{proposition}\label{Harnack}
Suppose that $\mu$ is a solution of \eqref{eqkinwave} such that the total density $\nu$ is locally bounded. Then for all $0 < b < + \infty$, there exists a constant $C(b) < + \infty$ such that the following Harnack inequality holds:
\begin{equation*}
\forall (\xi,\theta,\theta') \in (-b,b) \times \Theta \times \Theta, \qquad \mu(\xi,\theta) \leq C(b) \mu(\xi,\theta').
\end{equation*}
\end{proposition}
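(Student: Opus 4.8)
The plan is to regard \eqref{eqkinwave} as a uniformly elliptic equation in the single pair of variables $(\xi,\theta)$, to remove the boundary $\{\theta=\theta_{\min}\}\cup\{\theta=\theta_{\max}\}$ by an even reflection in $\theta$ — which is admissible precisely because of the homogeneous Neumann conditions — and then to invoke the classical interior Harnack inequality for nonnegative solutions of second order elliptic equations.

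\textbf{Reduction to a uniformly elliptic problem.} Since neither $\theta$ nor $c$ depends on $\xi$, and $\alpha$ is constant, \eqref{eqkinwave} can be written in divergence form as $\partial_\xi\bigl(\theta\,\partial_\xi\mu\bigr)+\partial_\theta\bigl(\alpha\,\partial_\theta\mu\bigr)+c\,\partial_\xi\mu+r\bigl(1-\nu(\xi)\bigr)\mu=0$. On a strip $(-b-1,b+1)\times\Theta$ the diffusion matrix $\mathrm{diag}(\theta,\alpha)$ has all its eigenvalues in $[\min(\theta_{\min},\alpha),\max(\theta_{\max},\alpha)]$ — here the hypothesis $\theta_{\min}>0$ is crucial — the drift $(c,0)$ is bounded, the zeroth-order coefficient is bounded by $r\bigl(1+\sup_{(-b-1,b+1)}|\nu|\bigr)<+\infty$ by the local boundedness of $\nu$, and $\mu\ge0$.

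\textbf{Reflection across the Neumann boundary and Harnack.} Fix $\eta\in(0,\theta_{\max}-\theta_{\min})$ and extend $\mu$ to $(-b-1,b+1)\times(\theta_{\min}-\eta,\theta_{\max}+\eta)$ by even reflection in $\theta$ across $\theta_{\min}$ and across $\theta_{\max}$, extending simultaneously the coefficient $\theta$ of $\partial_{\xi\xi}\mu$ by the same even reflection; for $\eta$ this small the reflected coefficient stays continuous and pinched between two positive constants. Since $\mu\in\mathcal{C}^2$ and $\partial_\theta\mu$ vanishes on $\{\theta_{\min}\}$ and $\{\theta_{\max}\}$, the reflected function has matching (vanishing) normal derivative from both sides of each interface, hence the extension is of class $\mathcal{C}^1$ across the interfaces, and a short integration by parts shows it is a weak solution of the reflected divergence-form equation on the enlarged strip; this reflection is licit because the diffusion matrix is diagonal and the drift has no $\theta$-component. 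The enlarged equation being again uniformly elliptic with bounded coefficients, and $[-b,b]\times[\theta_{\min},\theta_{\max}]$ lying in the interior of its domain, the classical interior Harnack inequality (De Giorgi--Nash--Moser, see \cite{Gilbarg}) produces a constant $C(b)$ — depending only on $\theta_{\min},\theta_{\max},\alpha,c,r$, on $\sup_{(-b-1,b+1)}|\nu|$ and on $b$ — with $\sup_{[-b,b]\times\Theta}\mu\le C(b)\,\inf_{[-b,b]\times\Theta}\mu$. In particular $\mu(\xi,\theta)\le C(b)\,\mu(\xi,\theta')$ for all $\xi\in(-b,b)$ and $\theta,\theta'\in\Theta$. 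When $\nu$ is bounded on all of $\R$, the structural constants do not depend on where the strip is centered, so by invariance of the equation under translations in $\xi$ the constant may be chosen independent of the center of the strip, which is the form used in Lemma \ref{inf} and Proposition \ref{prop:minspeed}.

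\textbf{Main obstacle.} Once the reflection is set up the statement is a direct application of the classical Harnack inequality, so the only delicate point is checking rigorously that the even extension across the Neumann boundary solves, in the weak sense, an equation that remains uniformly elliptic with bounded coefficients up to and slightly beyond $\theta_{\min}$ and $\theta_{\max}$; this is exactly where $\mu\in\mathcal{C}^2$, the Neumann conditions, and $\theta_{\min}>0$ are all used.
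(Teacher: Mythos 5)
Your proof follows essentially the same route as the paper's: an even reflection in $\theta$ across the Neumann boundaries to turn the boundary Harnack estimate into an interior one, observing that the Neumann condition prevents Dirac masses in $\partial_{\theta\theta}\mu$ at the reflection interfaces, followed by an application of the classical De Giorgi--Nash--Moser Harnack inequality (the paper's version of this proof is quite terse and you spell out more of the standard details, including the divergence-form rewriting, the $\mathcal{C}^1$ matching across interfaces, and the translation-invariance observation needed later, but the underlying mechanism is identical).
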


\begin{proof}[{\bf Proof of Proposition \ref{Harnack}}]

{One has to figure out how to obtain the validity of the Harnack inequality up to the boundary in $\Theta$. Indeed, it holds on sub-compacts sets thanks to the standard elliptic regularity, given that the density $\nu$ is bounded. To obtain the full Harnack estimate, we consider the equation \eqref{eqkinwave} after a reflection with respect to $\theta = \theta_{min}$ and $\theta = \theta_{max}$ and for positive values of $\theta$. One obtains the following equation in the weak sense
\begin{equation*}
\forall (\xi, \theta) \in \R \times \left( \R^{+*} \cap \left( \R \backslash \lbrace \theta_{min} + \Theta \Z \rbrace \right)\right) , \qquad - c \mu_\xi (\xi,\theta) - g(\theta) \mu_{\xi \xi}(\xi,\theta) - \alpha \mu_{\theta\theta}(\xi,\theta) = r \mu(\xi,\theta) (1 - \nu(t,\xi))\,.
\end{equation*}
The crucial point is that this equation is also satisfied on the boundaries $\theta = \R^+ \cap \lbrace \theta_{min} + \Theta \Z \rbrace $ thanks to the Neumann boundary conditions. Indeed, no Dirac mass in $\theta = \R^+ \cap \lbrace \theta_{min} + \Theta \Z \rbrace$ arises while computing the second derivative $\partial_{\theta\theta}$ in the symmetrized equation.}

\end{proof}

\section*{Appendix B: Proof of the interpolation estimate.}

We prove here the interpolation estimate which is needed in {\bf \# Step 2} of the proof of Lemma \ref{lem:nc}. Since $\theta$ is the only variable playing a role here, we denote $g(\theta) = n(t,x,\theta)$. Let $(\theta,\theta') \in \Theta^2$. For technical reason, we impose $|\theta-\theta'|^{-1}\geq e^4$. We set $K = |\theta-\theta'|^{-1}$. We first prove a H\"older-like estimate,
\begin{equation*}
\vert g(\theta) - g(\theta') \vert \leq \frac12 \|g\|_{H^{3/2}}|\theta - \theta'|\log\left(|\theta - \theta'|^{-1}\right)\, .
\end{equation*}
For this purpose, we use Fourier expansions. We recall the definition of the fractional Sobolev norm 
\begin{equation*}
\Vert f \Vert_{H^{\frac32}} = \left( \sum_{k\in\Z^*} |k|^3  |\hat f(k)|^2 \right)^{1/2},
\end{equation*}
where $\hat f$ is the Fourier transformation of $f$. We then have
\begin{align*}
|g(\theta) - g(\theta')| &  = \sum_{|k|\leq K} |\hat g(k)|\left|  e^{ik\theta} - e^{ik\theta'}\right| +  \sum_{|k|> K} |\hat g(k)|\left|  e^{ik\theta} - e^{ik\theta'}\right|  \\
& \leq \sum_{|k|\leq K}   |k| |\hat g(k)| |\theta - \theta'| + 2 \sum_{|k|> K}|\hat g(k)|\\  
& \leq |\theta - \theta'| \sum_{|k|\leq K}  |k|^{3/2}  |\hat g(k)| |k|^{-1/2} + 2 \sum_{|k|> K} |k|^{3/2}|\hat g(k)||k|^{-3/2} \\
& \leq |\theta - \theta'| \left( \sum_{k\in\Z^*} |k|^3  |\hat g(k)|^2 \right)^{1/2} \left( \sum_{|k|\leq K} |k|^{-1}   \right)^{1/2} + 2 \left( \sum_{k\in\Z^*} |k|^3  |\hat g(k)|^2 \right)^{1/2} \left( \sum_{|k|> K} |k|^{-3}   \right)^{1/2}\\
& \leq \|g\|_{H^{3/2}} \left(  |\theta - \theta'|\log\left(|\theta - \theta'|^{-1}\right) + 2  |\theta - \theta'| \right) \\
& \leq \|g\|_{H^{3/2}} |\theta - \theta'| \left( 2 + \log\left(|\theta - \theta'|^{-1}\right) \right) \\
& \leq \frac12 \|g\|_{H^{3/2}}|\theta - \theta'|\log\left(|\theta - \theta'|^{-1}\right)\, .
\end{align*}
Next we estimate
\begin{equation*}
g(\theta) = g(\theta) - g(\theta') + g(\theta')  \leq \frac12 \|g\|_{H^{3/2}}|\theta - \theta'|\log\left(|\theta - \theta'|^{-1}\right) + g(\theta').
\end{equation*}
We integrate for $|\theta - \theta'|\leq \delta/2$, and divide by $\delta$ where $\delta \leq e^{-4}$. 
\begin{equation*} 
g(\theta) \leq   \frac12 \|g\|_{H^{3/2}} \delta \log \delta^{-1} +  \dfrac{\|g\|_{L^1}}\delta\, . \end{equation*}
Choosing  $\delta = \min\left( e^{-4}, \left( \|g\|_{L^1}/\|g\|_{H^{3/2}}\right)^{1/2}\right)$, we get eventually
\[ 
\begin{cases}
\|g\|_{L^\infty} \leq \dfrac1{2} \left(\|g\|_{L^1} \|g\|_{H^{3/2}}\right)^{1/2}\left( \dfrac12 \log\left(\dfrac{\|g\|_{H^{3/2}}}{\|g\|_{L^1}}\right) + 2\right)  & \mathrm{if}\quad \dfrac{\|g\|_{L^1}}{\|g\|_{H^{3/2}}} \leq e^{-8} \medskip \\
\|g\|_{L^\infty} \leq 3 e^4 \|g\|_{L ^1} & \mathrm{otherwise} 
\end{cases} \]
In order to simplify the forthcoming computations, we use the simple estimate $(\forall \delta<e^{-4})\; \log \delta^{-1} + 2 \leq C \delta^{-1/3}$ for some constant $C$. We obtain finally 
\[
\begin{cases}
\|g\|_{L^\infty}^3 \leq C \|g\|_{L^1} \|g\|_{H^{3/2}}^2 & \mathrm{if}\quad \dfrac{\|g\|_{L^1}}{\|g\|_{H^{3/2}}} \leq \dfrac1C \medskip, \\
\|g\|_{L^\infty} \leq C \|g\|_{L ^1} & \mathrm{otherwise}. 
\end{cases} \]

\end{document}